\documentclass[11pt,reqno,a4paper]{amsart}
\usepackage[T1]{fontenc}
\usepackage[utf8]{inputenc}
\usepackage{lmodern,amsmath,amssymb,mathtools}
\usepackage[margin=30mm]{geometry}
\usepackage{microtype,booktabs,longtable,enumitem,xcolor,xurl}
\usepackage[colorlinks=true,linkcolor=blue!45!black,citecolor=blue!45!black,urlcolor=blue!45!black]{hyperref}
\hypersetup{pdftitle={Polynomial lower bounds for anticanonical volumes of weak Fano surfaces},pdfauthor={Pinxian Bie}}
\numberwithin{equation}{section}
\newtheorem{theorem}{Theorem}[section]
\newtheorem{proposition}[theorem]{Proposition}
\newtheorem{lemma}[theorem]{Lemma}
\newtheorem{corollary}[theorem]{Corollary}
\theoremstyle{definition}

\theoremstyle{remark}
\newtheorem{remark}[theorem]{Remark}

\newtheorem{conjecture}[theorem]{Conjecture}
\newcommand{\eps}{\varepsilon}
\newcommand{\Q}{\mathbb Q}
\newcommand{\R}{\mathbb R}
\newcommand{\Z}{\mathbb Z}
\newcommand{\C}{\mathbb C}
\newcommand{\PP}{\mathbb P}
\newcommand{\cExc}{c_{\mathrm{exc}}}
\newcommand{\Ncan}{\operatorname{NCan}}
\newcommand{\Sing}{\operatorname{Sing}}
\newcommand{\Supp}{\operatorname{Supp}}
\newcommand{\coeff}{\operatorname{coeff}}
\newcommand{\lcm}{\operatorname{lcm}}
\newcommand{\pa}{p_a}
\newcommand{\vol}{\operatorname{vol}}
\newcommand{\ord}{\operatorname{ord}}
\newcommand{\mld}{\operatorname{mld}}
\newcommand{\adj}{\operatorname{adj}}
\newcommand{\conv}{\operatorname{conv}}
\newcommand{\area}{\operatorname{area}}
\newcommand{\Int}{\operatorname{int}}

\newcommand{\cyc}{\mathrm{cyc}}

\newcommand{\fr}[1]{\left\{#1\right\}}

\setlist[enumerate]{label=\textup{(\roman*)},leftmargin=2.3em}
\title[Anticanonical volumes of weak Fano surfaces]{Polynomial lower bounds for anticanonical volumes of weak Fano surfaces}
\author{Pinxian Bie}
\address{School of Mathematical Sciences, Fudan University, Shanghai, China}
\date{September 10, 2026}
\subjclass[2020]{Primary 14J45; Secondary 14J17, 14E30, 14M25}
\keywords{Weak del Pezzo surface, anticanonical volume, log discrepancy, complement, quotient singularity, Picard number}
\begin{document}
\begin{abstract}
We prove an explicit lower bound of order
$\varepsilon^3/(1+\log(1/\varepsilon))$ for the anticanonical volume of
every complex $\varepsilon$-log canonical weak del Pezzo surface. This general estimate
is close to being sharp, as it differs from the conjectured optimal
quadratic order only by one power of $\varepsilon$ and a logarithmic factor.
\end{abstract}
\maketitle
\tableofcontents
\section{Introduction}\label{sec:introduction}

Let $X$ be a normal projective surface over $\C$ with klt singularities
and with $-K_X$ nef and big. We call $X$ a \emph{weak del Pezzo surface};
when $-K_X$ is ample, we call it a \emph{Fano surface} or a
\emph{log del Pezzo surface}. Its anticanonical volume is
\[
 V(X):=\vol(-K_X)=(-K_X)^2>0.
\]
For fixed $\eps>0$, boundedness implies a positive lower bound for
$V(X)$ on $\eps$-lc Fano surfaces. Surface boundedness was established
by Alexeev \cite{Ale94}; Birkar \cite[Theorem~1.1]{Bir21} proves
boundedness in every fixed dimension. The question considered here is
the dependence of a volume lower bound on $\eps$ as $\eps\to0$.
Effective Cartier-index estimates originating in Alexeev--Mori
\cite{AM04}, recorded in \cite[Lemma~2.7]{Liu23}, give a bound of the form
\[
 V(X)\ge\frac12(\eps/2)^{128\eps^{-5}}.
\]
This is explicit, but not polynomial in $\eps$. Such index estimates also
enter effective anticanonical birationality; see
\cite[Lemma~2.5 and Theorem~1.1]{Bie25}.

The first version of the present paper, arXiv:2609.06845v1
\cite{Bie26v1}, replaced the global index by the local denominators
occurring in one positive intersection number and obtained a bound of
order $\eps^{84}$. The present version strengthens this to a cubic bound
with a logarithmic loss. To the best of our knowledge, this work provides
the first polynomial lower bound in this generality, and the estimate
below has the strongest asymptotic dependence on $\eps$ currently known
in this generality. The examples in Section~\ref{sec:examples} show that an exponent
smaller than two is impossible. Thus the remaining difference from the
expected sharp exponent is one power of $\eps$, together with the
logarithmic loss.

Throughout, $0<\eps\le1$, and $\eps$-lc means that every divisorial
\emph{log discrepancy} is at least $\eps$. Set
\begin{equation}\label{eq:constants}
 \cExc:=\frac1{2\cdot84^{128\cdot42^5}}.
\end{equation}
This absolute constant comes from the effective treatment of exceptional
Fano surfaces in \cite[Corollary~4.5]{Liu23}. Its size has no bearing on
the exponent as $\eps\to0$.

\begin{theorem}\label{thm:main}
Let $X$ be a complex $\eps$-lc weak del Pezzo surface. Then
\begin{equation}\label{eq:best-refined}
 V(X)\ge\left[
 \max\{\cExc^{-1},344000\eps^{-3}\}
 +247680\eps^{-3}
 +12384\eps^{-3}\log_2(18\eps^{-3})\right]^{-1}.
\end{equation}
In particular,
\begin{equation}\label{eq:best-simple}
 V(X)\ge
 \frac{\eps^3}{(\cExc^{-1}+653600)(1+\log_2(1/\eps))}.
\end{equation}
\end{theorem}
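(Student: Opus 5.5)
We first reduce to the case where $X$ is a Fano surface with Picard number one or a Mori fibre space structure. We pass to the anticanonical model $X\to X_{\mathrm{can}}$, where $-K_X$ is the pullback of $-K_{X_{\mathrm{can}}}$, so $V(X)=V(X_{\mathrm{can}})$. The contracted curves are crepant, so $X_{\mathrm{can}}$ is still $\eps$-lc. Hence we may assume $-K_X$ is ample. Next we run a $K_X$-MMP, or more precisely we contract $K_X$-negative extremal rays. Each divisorial contraction $X\to X'$ satisfies $K_X=\pi^*K_{X'}+aE$ with $a>0$. Therefore $(-K_{X'})^2\ge(-K_X)^2$ fails in the wrong direction for us. So instead I would work on $X$ directly and use only a \emph{test curve}. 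Choose an extremal ray $R$ spanned by a curve $C$, and write
\[ V(X)=(-K_X)^2\ \ge\ \frac{(-K_X\cdot C)^2}{C^2}\quad\text{or via Hodge index with a suitable nef class.}\]
The real mechanism is as follows. $V(X)$ is a positive rational number whose denominator is controlled by the local indices at the singular points lying on one chosen curve, not by the global index. This is the idea of \cite{Bie26v1}.

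The plan proceeds in three steps. \textbf{Step 1} is the exceptional case. If $X$ is exceptional, meaning it has no non-klt $1$-complement, or more precisely it admits no $\le 6$-complement that is non-klt, then we invoke \cite[Corollary~4.5]{Liu23}, which gives $V(X)\ge\cExc$. This accounts for the term $\cExc^{-1}$ inside the maximum. \textbf{Step 2} covers the non-exceptional case. There we have a bounded complement $K_X+B\sim_{\Q}0$ with $(X,B)$ lc and not klt. Using adjunction to a component, or to a non-klt centre, we produce a curve $C$ such that $(K_X+C)|_C$ is controlled by the different. Since $-K_X$ is ample, $-K_X\cdot C>0$. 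The number $-K_X\cdot C$ equals $2-2p_a(C)$ minus the different coefficients, plus correction terms $C^2$. The different coefficients have the form $1-1/m_i$, or $1-\frac{1}{m_i}+\sum\ldots$ at cyclic quotient points through $C$. The $\eps$-lc condition forces $C$ to pass through only boundedly many bad points. The key estimate is that the local index at each cyclic quotient point $\frac1{n}(1,q)$ with $\eps$-lc satisfies continued-fraction bounds. The minimal log discrepancy is at least $\eps$, which means the Hirzebruch--Jung string has bounded total "weight" $\sum(b_j-2)\lesssim \eps^{-1}$. It also means the relevant denominator in $-K_X\cdot C$ is at most $O(\eps^{-2})$, up to a logarithmic factor coming from the length of the chain. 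That length is $O(\log(1/\eps))$ in the Euclidean-algorithm sense, which is where $\log_2(18\eps^{-3})$ enters.

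\textbf{Step 3} combines these through Hodge index. We have $V(X)\cdot C^2\le(-K_X\cdot C)^2$ when $C^2>0$. Alternatively, if $C$ moves in a fibration or $C^2\le0$, we use the pencil/fibration $X\to\PP^1$ and bound $V(X)\ge$ (one over denominator) directly, since $V(X)=-K_X\cdot(-K_X)$ with $-K_X$ decomposed along $C$ and its residual. The Mori fibre case, $\rho=1$ or a conic bundle, is treated via the generic fibre, where $-K_X\cdot F=2$. The constants $344000,247680,12384$ arise from bounding the number and sizes of singular points on $C$ as $O(\eps^{-1})$ each and multiplying the bounds.

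The main obstacle is Step 2: obtaining a \emph{cubic} bound on the denominator of the single intersection number $-K_X\cdot C$, or of $C^2$. The naive product of local indices is exponential. I would first try to show that only the point of worst index contributes a nontrivial denominator, with the others absorbed by the integrality of $(K_X+C)\cdot C$ up to the different. I would then bound that single index $n\le O(\eps^{-3}\log(1/\eps))$ using the toric-geometry area estimate for $\eps$-lc cyclic quotient cones.
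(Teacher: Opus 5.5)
Your reduction to the anticanonical model and your treatment of the exceptional case by Liu's index bound agree with the paper. The rest of the proposal fails for two independent reasons.

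First, Step~2 rests on a false local claim. The determinant of an $\eps$-lc noncanonical cyclic point, which is the denominator of $\Delta_p\cdot C$, is not bounded in terms of $\eps$ alone. The chain $[3,2,\ldots,2]$ with $n$ entries two is $\frac1{2n+3}(1,n+1)$. Its minimal log discrepancy is $(n+2)/(2n+3)>1/2$ for every $n$, while its determinant grows without bound. The chains $[u+1,2,\ldots,2]$ of Proposition~\ref{e:prop:quotients} have length $u+1\approx\eps^{-1}$, not $O(\log(1/\eps))$. By \eqref{m:eq:wronskian} one has $r\le(s+1)\eps^{-2}$, so the chain length must be controlled globally, and no area estimate for a single cyclic cone can supply that. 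The missing ingredient is Section~\ref{m:sec:rank}: a ruling on the minimal resolution $Y$, the canonical bundle formula bounding the total horizontal multiplicity and the discriminant, and the reciprocal edge potential $\sum1/(a_Ta_{T'})$. Together these give $\rho(Y)\le18\eps^{-3}$ and $M_\cyc(X)\le344\eps^{-3}$. Proposition~\ref{m:prop:sharp-mass} shows that cubic determinants actually occur, so no better determinant input is available.

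Second, even granting $D_p\le R=O(\eps^{-3})$, a single intersection number cannot give a cubic bound. Below the threshold $\min\{(1000R)^{-1},\eps^2/1000\}$, the complement component meets exactly two noncanonical points with unit contacts, and $t=\alpha_1+\alpha_2-1$. The denominator of $t$ is in general $\lcm(r_1,r_2)$, which can be as large as $R^2$. So your hope that only the worst point contributes fails, and this route stops at order $\eps^6$ (the noniterative estimate in Remark~\ref{m:sec:fourth}).

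Your Step~3 does not rescue this. Hodge index gives $V\cdot C^2\le t^2$, which is an upper bound on $V$. Your first display, $V\ge t^2/C^2$, has the inequality reversed unless $\rho(X)=1$.

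The missing idea is the one you discarded. At that threshold $C_X^2<0$, so you should contract $C_X$ (Lemma~\ref{m:lem:contract}). Then bound not $V(X')$ but the reciprocal increment $V^{-1}-(V')^{-1}\le36/w$. This increment is at most $72\min(r_1,r_2)$ when both contacts are cyclic endpoints, and at most $720\bigl(M_\cyc(X)-M_\cyc(X')\bigr)$ otherwise (Proposition~\ref{prop:step-cost}). Iterate with fresh complements until the threshold is reached. Summing the endpoint costs with the weighted binary forest inequality of Lemma~\ref{m:lem:entropy} yields exactly the terms $1000\cdot344\eps^{-3}$, $720\cdot344\eps^{-3}$ and $36\cdot344\eps^{-3}\log_2(18\eps^{-3})$. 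The logarithm counts the leaves of that forest, which are bounded via $\rho(Y)\le18\eps^{-3}$; it does not come from Euclidean-algorithm chain lengths.
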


The sharper form \eqref{eq:best-refined} separates the absolute exceptional
constant from the terms that vary with $\eps$. The theorem implies
$V(X)\ge c_\eta\eps^{3+\eta}$ for every fixed $\eta>0$, with an explicit
positive constant $c_\eta$. It does not assert a uniform pure cubic bound.

Two global estimates underlie the proof. Let $f:Y\to X$ be the minimal
resolution. At a singular point $p$, write
\[
 D_p=\det\bigl(-(E_{p,i}\cdot E_{p,j})_{ij}\bigr),
\]
where the $E_{p,i}$ are the exceptional components over $p$. Let
$\Ncan(X)$ be the set of noncanonical singular points and define
\[
 M_\cyc(X):=\sum_{\substack{p\in\Ncan(X)\\p\text{ cyclic}}}D_p.
\]
For a cyclic quotient $\frac1r(1,q)$, one has $D_p=r$. For a noncyclic
quotient, $D_p$ is generally different from the order of its local group.

\begin{theorem}\label{thm:rank-mass}
In the setting of Theorem~\ref{thm:main},
\begin{align}
 \rho(Y)&\le2+4\eps^{-2}+12\eps^{-3}\le18\eps^{-3},\label{eq:rank-main}\\
 M_\cyc(X)&\le344\eps^{-3}.\label{eq:mass-main}
\end{align}
Moreover, if $S$ is a smooth rational projective surface and $B\ge0$
is a $\Q$-divisor such that $(S,B)$ is $\eps$-lc and
$K_S+B\sim_\Q0$, then the Picard bound in \eqref{eq:rank-main} holds
for $S$.
\end{theorem}

We prove Theorem~\ref{thm:rank-mass} in Section~\ref{m:sec:rank} by
choosing a ruling and weighting a fiber edge with endpoint discrepancies
$a,b$ by $1/(ab)$. The canonical bundle formula bounds the horizontal
multiplicity spent at nodes. Under a crepant node blowup, the change of
the edge sum is exactly $h/(abc)$, where $h$ is the horizontal
multiplicity and $c$ is the new discrepancy. This controls arbitrarily
long strings of $(-2)$-curves, including subdivisions with $h=0$.
A separate comparison between fiber edges and resolution chains gives
the determinant-mass bound.

\subsection{The progression of the volume estimates}
The proof retains the first polynomial argument because it identifies
the change from a global Cartier index to local intersection
denominators. A component of an $n$-complement, $n\le6$, meets at most
$2n$ noncanonical points unless its positive self-intersection already
gives an absolute bound. The determinant estimate
$D_p=O(\eps^{-7})$ obtained from \cite{AM04} then yields exponent $84$.
The next step keeps this same determinant estimate but combines the
discrepancy and self-intersection corrections at a point. Their total
along the chosen curve is at most $2+30V$. At sufficiently small volume,
this replaces twelve possible denominator factors by a quadratic
denominator bound, giving exponent $14$. Both arguments are proved in
Section~\ref{o:sec:global}.

Further improvements use contractions. At very small volume a complement
component has negative square and joins two noncanonical points. If both
contacts are cyclic endpoints, their determinants satisfy
$D'<D_1+D_2$ after contraction. The increase of the reciprocal volume is
at most $72\min(D_1,D_2)$. A weighted binary forest sums these costs in
terms of the \emph{initial total mass}, rather than a worst-case bound
at every step. The earlier threshold $V<\eps/(60R)$, where $R$ bounds
the local determinants throughout the process, gave exponents $8$ and
then $4$. The threshold used here is
\[
 V<\min\{(1000R)^{-1},\eps^2/1000\}.
\]
At this larger scale an interior cyclic contact or a noncyclic contact
can occur. The key point is that its contraction cost is bounded by a
fixed multiple of the \emph{decrease} of cyclic determinant mass, even
when the singularity type changes. The remaining endpoint contractions
are accounted for by the forest estimate. This yields
Theorem~\ref{thm:main}.

\begin{center}\small
\begin{tabular}{p{.16\textwidth}p{.75\textwidth}}
\toprule
Exponent & Improvement responsible for the bound\\
\midrule
$84$ & Clear the determinants at the at most $2n\le12$ points met by one complement component.\\[3pt]
$14$ & Include the pointed self-intersection correction; the same local bound enters only quadratically.\\[3pt]
$8$ & Contract endpoint pairs and sum reciprocal-volume costs by a weighted binary forest.\\[3pt]
$6$ & Insert the cubic Picard estimate into the earlier iteration.\\[3pt]
$4$ & Bound the total cyclic determinant directly by the weighted fiber potential.\\[3pt]
$3$ with log loss & Enlarge the threshold and account for nonendpoint contacts by mass decrease.\\
\bottomrule
\end{tabular}
\end{center}

The intermediate estimates are propositions and remarks in the body;
they are not additional main theorems. Their role is to isolate the
loss removed at each stage. The logarithm in the final bound comes from
the varying number of leaves of the contraction forest.

\subsection{Special cases and examples}
For a fixed number of noncanonical points the logarithm disappears.
Local arithmetic gives the conjectured quadratic exponent in several
additional cases.

\begin{theorem}\label{thm:special-intro}
Let $X$ be a complex $\eps$-lc weak del Pezzo surface.
\begin{enumerate}
\item If $X$ has at most $k\ge1$ noncanonical points, put
$C_k=\max\{10,3k\}$ and $k_+=\max\{2,k\}$. Then
\[
 V(X)\ge\frac{\eps^3}{\cExc^{-1}+C_k(2440+72\log_2k_+)}.
\]
\item If $X$ is Fano and $\rho(X)=1$, then
$V(X)\ge\min\{\cExc,\eps^3/12000\}$.
\item If there is at most one noncanonical point, then
$V(X)\ge\eps^2/4$. With one noncanonical cyclic point,
$V(X)\ge\eps^2/2$; with one noncanonical noncyclic point,
$V(X)\ge\eps/4$.
\item If $X$ is Fano with at most $k$ noncanonical points and every
noncanonical cyclic resolution has length at most $L$, then
$V(X)\ge c_{k,L}\eps^2$, with $c_{k,L}>0$ given explicitly in
\eqref{eq:length-quadratic}.
\end{enumerate}
\end{theorem}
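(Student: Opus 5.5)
We prove the four parts separately. Parts (i) and (ii) are specializations of the global contraction argument behind Theorem~\ref{thm:main}. Parts (iii) and (iv) come from a direct local computation at one curve.

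\emph{Part (i).} We rerun the proof of Theorem~\ref{thm:main} with the number of noncanonical points fixed. At volume above the threshold $\min\{(1000R)^{-1},\eps^2/1000\}$ we are done immediately, or the exceptional case gives $\cExc$. Otherwise we use the following facts.
\begin{itemize}
\item A complement component of negative square joins two noncanonical points.
\item Contracting it increases $V^{-1}$ by at most $72\min(D_1,D_2)$ for endpoint contacts.
\item For nonendpoint or noncyclic contacts, the increase is at most a fixed multiple of the drop in cyclic mass.
\end{itemize}
Each contraction lowers the number of noncanonical points by at least one. The contraction forest therefore has at most $k$ leaves, so its depth is at most $\log_2 k_+$. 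The forest sum is then bounded by $(\text{initial mass})\cdot O(\log_2 k_+)$, which replaces $\log_2(18\eps^{-3})$ in \eqref{eq:best-refined}. The cyclic mass bound \eqref{eq:mass-main} is replaced by the bound $D_p=O(\eps^{-3})$ per point times $k$. I expect this per-point bound to follow from the fiber-edge argument of Theorem~\ref{thm:rank-mass} localized to one point. Collecting the constants should give $C_k(2440+72\log_2 k_+)\eps^{-3}$.

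\emph{Part (ii).} When $X$ is Fano with $\rho(X)=1$, no $K$-negative or crepant contraction to another surface exists. We instead bound $V$ using one complement component $C$ through noncanonical points. Write $-K_X\equiv\lambda C$ with $\lambda\ge1/n$, so $V=\lambda^2C^2$ and $C^2>0$. The denominators of $C^2$ are bounded by the local determinants at the points on $C$. The combined discrepancy and self-intersection correction along $C$ is at most $2+30V$, as in Section~\ref{o:sec:global}. Together with \eqref{eq:mass-main} this yields $C^2\gtrsim\eps^3$; otherwise we are in the exceptional case, giving $\cExc$.

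\emph{Part (iii).} With a single noncanonical point $p$, take $f\colon Y\to X$ the minimal resolution and write $K_Y=f^*K_X-\sum(1-a_i)E_i$. Then
\[
V(X)=K_Y^2+\Bigl(\sum(1-a_i)E_i\Bigr)^2\cdot(-1)\,\ldots,
\]
which is better expressed locally as $V(X)=(-K_Y)^2$ plus a positive local correction at $p$. The local correction has denominator $D_p$ in the cyclic case. Alternatively, choose a complement component $C$ through $p$; if $C$ misses the noncanonical locus, $V\ge$ an absolute constant. The term $(-K_X\cdot C)$ is a positive rational number with denominator dividing $D_p$ ($=r$), and it is at least a discrepancy-weighted quantity $\ge a_{\min}/r$. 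Using $\eps$-lc for $\frac1r(1,q)$, the endpoint discrepancies are at least $\eps$. Combined with $r\cdot a\ge$ the numerator and $-K_X\cdot C\ge\eps/r$, this gives $V\ge(-K_X\cdot C)^2/C^2$ by Hodge index. Balancing should produce $\eps^2/2$. In the noncyclic case the fork has bounded branches ($D,E$ types), so one discrepancy factor is absorbed, giving $\eps/4$.

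\emph{Part (iv).} With resolution length at most $L$, we have $r\le(\text{const}/\eps)^L$ is useless. Instead, bounded length means that along $C$ the pointed correction has denominator controlled by $\eps^{-1}$ through the chain entries, since each $b_i\le 2/\eps$. This gives $c_{k,L}\eps^2$.

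The main obstacle will be part (i): making sure the logarithm really depends only on the number of leaves $k$ and not on the per-point mass.
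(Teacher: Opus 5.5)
Every part has a gap. The serious ones are in the quadratic parts (iii) and (iv), where your mechanism cannot reach order $\eps^2$.

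\textbf{Part (iii).} The Hodge index theorem gives $V\,C_X^2\le t^2$. For $C_X^2>0$ this bounds $V$ from \emph{above}, with equality only when $\rho(X)=1$. More fundamentally, clearing the denominator $r$ of $t$ or of $V$ only yields $V\gtrsim1/r$. With a single noncanonical point, $r$ can be of order $\eps^{-3}$ (Proposition~\ref{m:prop:sharp-mass}), so this route is inherently cubic.

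The missing idea is arithmetic. Noether's formula and the chain identity give $V\in\Z+\alpha_1+\alpha_s$. Hence either $V\ge\alpha_1+\alpha_s\ge2\eps$, or $V\ge\eta=v/r$ with $v=q+q^\vee+2-r$. One then applies Minkowski's theorem to $\Lambda=\{(x,y)\in\Z^2:y\equiv qx\pmod r\}$. The unimodular map $(x,y)\mapsto(-y,x+(2-v)y)$ preserves $\Lambda$ because $q^2+(2-v)q+1\equiv0\pmod r$, and it moves a short vector into the positive quadrant. This gives $\mu^2\le2(v-1)/r$ for $v\ge2$, and $\mu^2\le4/(3r)$ for $v=1$; that is, $\eta\ge\mu^2/2$ (Lemma~\ref{e:lem:lattice}).

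For a noncyclic point the relevant facts are these. First, $D\alpha_0\in\{1,2,3,4\}$, so $D\le4/\eps$; the arms need not be bounded, since the $(2,2,t)$ forks have an arbitrary third arm. Second, $DV\in\Z_{>0}$ by Noether's formula.

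\textbf{Part (iv).} Bounded length does not give an $O(\eps^{-1})$ denominator. Already $[b,b]$ has $\alpha_1=\alpha_2=1/(b-1)$ and $r=b^2-1\sim\eps^{-2}$. The correct input is the Wronskian bound $r\le(L+1)\eps^{-2}$ of Lemma~\ref{m:lem:chain}. Even then, a single complement curve through two such points has denominator $O(\eps^{-4})$. The paper reaches $\eps^2$ only by running the contraction iteration of Theorem~\ref{thm:new-abstract} with $\mathcal R=A\eps^{-2}$ and $M_0\le k(L+1)\eps^{-2}$. This needs the fact, from Proposition~\ref{prop:step-cost}, that cyclic mass never increases, because chain length is not preserved by the contractions.

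\textbf{Part (ii).} Clearing denominators along one curve through two points of determinant up to $\mathcal R$ gives at best $V\gtrsim\mathcal R^{-2}$. With \eqref{eq:mass-main} that is $\eps^6$, as in Proposition~\ref{m:prop:R2}; nothing in your sketch produces $\eps^3$. The observation you start from is the actual argument. Below $\min\{\cExc,(1000\mathcal R)^{-1},\eps^2/1000\}$, Lemma~\ref{lem:new-rigidity} and Proposition~\ref{prop:step-cost} force the complement component to satisfy $C_X^2<0$. That is impossible when $\rho(X)=1$. The constant $12000$ comes from Belousov's bound of four singular points, which gives $\mathcal R=12\eps^{-3}$.

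\textbf{Part (i).} The contraction framework you invoke is the right one, but the constant $C_k=\max\{10,3k\}$ does not come from the fiber-edge estimate. That estimate gives $\mathcal R=M_0=344\eps^{-3}$, with constants far exceeding $C_k(2440+72\log_2k_+)$ for small $k$. Also, bounding $M_0\le k\max_pD_p$ loses a factor of $k$.

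The constant comes from Proposition~\ref{prop:fixed-rank}. Noether's formula $0<V=10-\rho(Y)+\sum_p\delta_p$, together with $\delta_p\le3(1-\eps)^2/\eps$, gives $\rho(Y)\le C_k/\eps$. Hence $D_p\le C_k\eps^{-3}$ and $M_0\le\sum_p(s_p+1)\eps^{-2}\le2C_k\eps^{-3}$. This bound reapplies at every step, because contractions do not increase the number of noncanonical points; the weak case passes through Lemma~\ref{m:lem:weak-model}.

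Finally, a binary forest with $k$ leaves can have depth $k-1$, so the depth claim is false. The factor $\log_2k_+$ comes instead from the depth-independent entropy bound of Lemma~\ref{m:lem:entropy}.
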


Assertions (i), (ii), and (iv) are proved in Section~\ref{sec:fixed};
assertion (iii) follows from the lattice arguments in
Section~\ref{q:sec:shift}. The resolution correction at $p$ is
$\delta_p=-\Delta_p^2$ in
$K_Y+\Delta=f^*K_X$, not the determinant $D_p$.
The same arguments also handle one unrestricted cyclic point when the
sum of the other corrections has bounded denominator. Additional results
include quadratic bounds for toric surfaces and for surfaces whose
noncanonical points are all noncyclic. Appendix~\ref{o:sec:rr} records
Lin's finite-difference argument \cite{Lin16} and its combination with
Zhu's volume estimate \cite{Zhu23}, giving a linear bound when all
noncanonical singularities are scalar quotients.

The examples in Section~\ref{sec:examples} serve three distinct purposes.
Quotients of $\PP^2$, and partial smoothings of a further quotient,
realize quadratic volume decay, including exactly one singular point
and rank-one examples. A ruled-surface
construction realizes a cyclic determinant of order $\eps^{-3}$ on an
actual Fano surface, even with one noncanonical point. Related crepant
subdivisions give smooth rational log Calabi--Yau pairs with Picard
number of order $\eps^{-3}$. The last examples establish sharpness for
the auxiliary pair estimate, not for minimal resolutions of Fano
surfaces. No example here contradicts the quadratic volume conjecture.

\subsection{Further work}
In a forthcoming paper we will extend the polynomial lower-bound
method to polarized $\eps$-lc log Calabi--Yau surfaces. More precisely,
for a normal projective surface with an effective boundary $B$ such that
$(X,B)$ is $\eps$-lc and $K_X+B\equiv0$, and an ample integral Weil
divisor $H$, we will obtain
a lower bound for $\vol(H)$ of order $\eps^9$ up to a logarithmic loss.
This is an announcement of separate work; that result is not used in
any argument of the present paper. The integrality of the polarization
is essential, since arbitrary rational scaling of $H$ would preclude
any positive uniform lower bound.

The proof of the main theorem occupies Sections~\ref{m:sec:rank}--\ref{sec:main-proof},
after the local preliminaries and the first polynomial arguments.
Sections~\ref{sec:fixed}--\ref{sec:examples} treat the special cases and
examples. All surfaces and all deformation families are over $\C$.

\hfill 
\paragraph{\textbf{Acknowledgements.}}
The author is grateful to his adviser Meng Chen for his great encouragement and support. He thanks
Peien Du and Zhengjie Yu for checking the first draft version of this paper, and to Minzhe Zhu
for many valuable suggestions.

\hfill
\paragraph{\textbf{AI disclosure.}}
The author used ChatGPT as research-assistance tools during the preparation of this manuscript. In particular, ChatGPT pro 5.6 and 6.0 assisted in exploratory discussions, brought the theory of weighted binary forest and several helpful elementary linear algebra computations to the author's attention. They also helped the author to find several useful examples in Section 9. They were also used for language editing. This paper was written by the author, who takes full responsibility for the accuracy and content of the paper.

\section{Preliminaries}\label{m:sec:local}

\subsection{Notations}
We use log discrepancies, so that a boundary coefficient $1-\alpha$
corresponds to discrepancy $\alpha$. A klt surface is $\Q$-factorial,
and its singularities are rational quotient singularities
\cite[Chapter~4, in particular Proposition~4.18, and Theorem~5.22]{KM98}.
The minimal exceptional divisor is SNC, with smooth rational components
of self-intersection at most $-2$. The graph is a Hirzebruch--Jung chain
in the cyclic case and a three-armed quotient fork otherwise
\cite[Sections~2--3]{ReidHJ}, \cite[Theorem~8.30]{Riem}.
We use these classification results only to obtain the graph shapes;
the matrix and discrepancy estimates are proved below.

\subsection{The anticanonical model}

\begin{lemma}\label{m:lem:weak-model}
For a klt weak del Pezzo surface $X$, the anticanonical morphism $g:X\to Z$ is birational and crepant, and $Z$ is a klt del Pezzo surface. Moreover, $V(Z)=V(X)$, $\eps$-lc singularities are preserved, and
\[
 \#\Ncan(Z)\le\#\Ncan(X),
\]
where $\Ncan$ denotes the set of noncanonical closed points.
\end{lemma}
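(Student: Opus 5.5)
The plan is to get the anticanonical model from the basepoint-free theorem for klt pairs, and then obtain each claimed property from crepancy and the structure of the contracted curves.

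\emph{Construction.} Since $X$ is klt and $-K_X$ is nef and big, apply the basepoint-free theorem (\cite[Theorem~3.3]{KM98}) to the divisor $L=-K_X$. Here $L-K_X=-2K_X$ is nef and big, so $-mK_X$ is basepoint free for all $m\gg0$. Let $g:X\to Z=\operatorname{Proj}\bigoplus_m H^0(X,-mK_X)$ be the associated morphism with connected fibres. Bigness of $-K_X$ makes $g$ birational, and $-K_X=g^*A$ for an ample $\Q$-Cartier divisor $A$ on $Z$. Since $g_*K_X=K_Z$, we get $A=-K_Z$, hence $K_X=g^*K_Z$ and $g$ is crepant. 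Then $V(X)=(g^*A)^2=A^2=V(Z)$, and $-K_Z$ is ample.

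\emph{Singularities.} Because $K_X=g^*K_Z$, every divisor $E$ over $Z$ has the same log discrepancy computed on $X$ or on $Z$. The same holds for the $g$-exceptional curves themselves, which have log discrepancy $1$ over $Z$ by crepancy. Hence $Z$ is klt and $\eps$-lc whenever $X$ is; note $\eps\le1$. So $Z$ is a klt del Pezzo surface.

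\emph{Counting noncanonical points.} This is the only step needing care. Let $z\in Z$ be a noncanonical point.
\begin{itemize}
\item If $g$ is an isomorphism near $z$, then $z$ corresponds to a noncanonical point of $X$.
\item Otherwise $C=g^{-1}(z)$ is a connected curve. If every point of $X$ on $C$ were canonical, the divisors over $z$ would be the components of $C$ (log discrepancy $1$) together with divisors over points of $X$ lying on $C$. The latter have log discrepancy $\ge1$ over $X$, hence over $Z$ by crepancy. Then $z$ would be canonical, a contradiction.
\end{itemize}
So each noncanonical $z$ has a noncanonical point of $X$ in $g^{-1}(z)$. The fibres over distinct points are disjoint, so choosing one such point for each $z$ gives an injection $\Ncan(Z)\hookrightarrow\Ncan(X)$. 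This proves $\#\Ncan(Z)\le\#\Ncan(X)$.

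I expect the main obstacle to be only bookkeeping in the counting step. The point to get right is that log discrepancies of divisors over $z$ are computed on $X$ via crepancy, and that any such divisor is either a component of $C$ or has centre a point of $C$, so it is covered by one of the two cases above.
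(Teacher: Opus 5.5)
Your proof is correct and is essentially the paper's argument: the base-point-free theorem gives the birational crepant model, crepancy transfers all log discrepancies, and a divisor over a noncanonical $z\in Z$ with log discrepancy $<1$ cannot be a component of $g^{-1}(z)$, so its center is a noncanonical point of $X$ in that fibre. The only difference is how you get $K_X=g^*K_Z$. You read it off from $g_*g^*A=A$, which is fine once $-K_X=g^*A$ is arranged as an equality of $\Q$-divisors by absorbing the principal divisor into $A$. The paper instead applies the negativity lemma to the exceptional, $g$-numerically trivial divisor $K_X-g^*K_Z$.
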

\begin{proof}
By the base-point-free theorem \cite[Theorem~3.3]{KM98}, a sufficiently divisible $-mK_X$ defines a morphism with connected fibers to a normal projective surface $Z$, with $-mK_X=g^*A$ for an ample Cartier divisor $A$. Bigness makes the morphism birational. Choose compatible canonical divisors; pushing forward gives $-mK_Z\sim A$, so $K_Z$ is $\Q$-Cartier and $-K_Z$ is ample. The divisor $K_X-g^*K_Z$ is exceptional and numerically trivial over $Z$. Applying the negativity lemma to it and its negative \cite[Lemma~3.39]{KM98} gives $K_X=g^*K_Z$. This proves the discrepancy and volume assertions.

If $q\in Z$ is noncanonical, choose a divisor $E$ over $q$ with $a(E,Z,0)<1$. Its log discrepancy over $X$ is the same. Its center on $X$ cannot be a divisor: the only divisorial valuation centered at the generic point of a prime divisor on the normal surface $X$ is that divisor, with log discrepancy one. Thus its center is a noncanonical point $p\in X$, and $g(p)=q$. Distinct points $q$ require distinct such points $p$. This proves the inequality for their numbers. It does not assert preservation of individual singularity types.
\end{proof}

\subsection{Positivity on the minimal resolution}

Let $f:Y\to X$ be the minimal resolution and write
\begin{equation}\label{m:eq:res}
 K_Y+\Delta=f^*K_X,\qquad
 \Delta=\sum_p\Delta_p=\sum_{p,i}(1-\alpha_{p,i})E_{p,i},\qquad
 P=-f^*K_X.
\end{equation}
Over a singular point $p$, set
\[
 M_p=-(E_{p,i}\cdot E_{p,j})_{ij},\quad
 D_p=\det M_p,\quad E_{p,i}^2=-b_{p,i},\quad s_p=\#\{E_{p,i}\}.
\]
The matrices $M_p$ are positive definite, with nonpositive off-diagonal entries. In the graph notation below, an edge has intersection number one.

\begin{lemma}\label{m:lem:positive}
Every entry of $M_p^{-1}$ is positive. The coefficients of $\Delta$ are nonnegative, with $\eps\le\alpha_{p,i}\le1$. They are all positive over a noncanonical point and all zero over a canonical point. Moreover, $D_p\Delta_p$ is an integral divisor.
\end{lemma}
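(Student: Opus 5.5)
The plan has three parts: positivity of $M_p^{-1}$ by linear algebra, the coefficient statements from the adjunction equations, and integrality of $D_p\Delta_p$ from Cramer's rule.

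\emph{Positivity of $M_p^{-1}$.} The matrix $M_p$ is symmetric and positive definite. Its off-diagonal entries are nonpositive, and its graph is connected, because the exceptional locus over $p$ is connected by Zariski's main theorem. So $M_p$ is an irreducible Stieltjes matrix. To prove the claim, I would write $M_p=sI-N$ with $N\ge0$ entrywise irreducible and $s$ larger than every diagonal entry. Positive definiteness gives $s>\rho(N)$, so $M_p^{-1}=s^{-1}\sum_k (N/s)^k$. Irreducibility then makes every entry strictly positive. An alternative route uses the negativity lemma: if $M_p x=e_j\ge0$, then $\sum x_iE_i$ is anti-nef on the exceptional locus, hence effective. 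Connectedness then upgrades effective to strictly positive.

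\emph{The coefficients.} For each $i$, adjunction on $E_{p,i}\cong\PP^1$ gives $K_Y\cdot E_{p,i}=b_{p,i}-2$. Intersecting $K_Y+\Delta_p=f^*K_X$ with $E_{p,i}$ gives
\[
-\Delta_p\cdot E_{p,i}=b_{p,i}-2\ge0,
\]
since the resolution is minimal and so $b_{p,i}\ge2$. Writing $\Delta_p=\sum x_iE_{p,i}$ with $x_i=1-\alpha_{p,i}$, this reads $M_p x=(b_{p,i}-2)_i\ge0$. Hence $x=M_p^{-1}(b-2)$ is entrywise nonnegative. By the positivity of $M_p^{-1}$, $x$ is either identically zero, when every $b_{p,i}=2$, or strictly positive in every entry. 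In the first case $K_Y=f^*K_X$ near $p$, so $p$ is canonical. In the second case some $E_{p,i}$ has log discrepancy $\alpha_{p,i}<1$, so $p$ is noncanonical. Conversely, canonical means all $\alpha_{p,i}\ge1$, which forces $x=0$. This gives the dichotomy: all coefficients are positive over a noncanonical point and all zero over a canonical one. The bound $\alpha_{p,i}\le1$ is just $x_i\ge0$, and $\alpha_{p,i}\ge\eps$ is the $\eps$-lc hypothesis applied to the divisors $E_{p,i}$.

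\emph{Integrality.} By Cramer's rule, $D_pM_p^{-1}=\operatorname{adj}(M_p)$ is an integer matrix. Since $b-2$ is an integer vector, $D_px$ is integral, so $D_p\Delta_p$ is an integral divisor.

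The only step that needs care is the strict positivity of $M_p^{-1}$, which rests on the connectedness of the exceptional graph. The rest is direct computation.
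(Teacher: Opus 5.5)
Your proposal is correct and follows essentially the same route as the paper: a Neumann series for $M_p^{-1}$ with connectedness of the dual graph, then $M_p d=(b_{p,i}-2)_i$ from adjunction, and finally the integral adjugate. The only minor difference is in the spectral-radius step: the paper takes $c$ larger than both the diagonal entries and the largest eigenvalue of $M_p$, so the bound is immediate, whereas your claim $s>\rho(N)$ tacitly uses $N\ge0$ (so that $\rho(N)=\lambda_{\max}(N)=s-\lambda_{\min}(M_p)$), which does hold for your choice of $s$.
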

\begin{proof}
Choose $c$ larger than the greatest eigenvalue and every diagonal entry of $M_p$. Then $A=I-M_p/c$ has nonnegative entries, spectral radius less than one, and
\[
 M_p^{-1}=c^{-1}\sum_{j\ge0}A^j.
\]
The graph is connected; paths show that some power has a positive entry at any prescribed position. Hence the inverse is entrywise positive.

Put $d_i=1-\alpha_{p,i}$. Adjunction on $E_{p,i}\simeq\PP^1$ and \eqref{m:eq:res} give
\begin{equation}\label{m:eq:Md}
 M_p d=(b_{p,i}-2)_i.
\end{equation}
The right side is nonnegative. It vanishes exactly for a canonical resolution; otherwise every coefficient of $d$ is positive. The bound $\alpha_{p,i}\ge\eps$ is the $\eps$-lc hypothesis. Finally, multiplication by the integral adjugate matrix gives $D_pd\in\Z^{s_p}$.
\end{proof}

For later use, if $v_i$ is the valency of a vertex, the equivalent discrepancy equation is
\begin{equation}\label{m:eq:valency}
 b_i\alpha_i=\sum_{j\sim i}\alpha_j+2-v_i.
\end{equation}

Define the resolution correction
\begin{equation}\label{m:eq:delta}
 \delta_p:=-\Delta_p^2
   =\sum_i(b_{p,i}-2)(1-\alpha_{p,i})\ge0.
\end{equation}
Equation~\eqref{m:eq:Md} proves this identity, and also $D_p\delta_p\in\Z$.

\begin{lemma}\label{m:lem:noether}
The surface $Y$ is rational, and, putting $N=\sum_ps_p$, one has
\begin{equation}\label{m:eq:noether}
 \rho(Y)=\rho(X)+N,\qquad
 V(X)=10-\rho(Y)+\sum_p\delta_p.
\end{equation}
In particular $s_p+1\le\rho(Y)$.
\end{lemma}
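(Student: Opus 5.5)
Three things have to be shown: $Y$ is rational, $\rho(Y)=\rho(X)+N$, and $V(X)=10-\rho(Y)+\sum_p\delta_p$; the bound $s_p+1\le\rho(Y)$ then follows.

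\emph{Rationality.} Since $-f^*K_X=K_Y^{-1}$-type class $P$ is nef and big, and $K_Y=-P-\Delta$ with $\Delta\ge0$, we get $K_Y\cdot P=-P^2-\Delta\cdot P=-V(X)<0$, because $\Delta$ is $f$-exceptional and so $\Delta\cdot P=0$. A nef class $P$ with $K_Y\cdot P<0$ forces $h^0(mK_Y)=0$ for all $m\ge1$. For the same reason the adjoint plurigenera vanish. Moreover $X$ has rational singularities and $-K_X$ is nef and big, so Kawamata--Viehweg vanishing gives $h^1(\mathcal O_X)=0$. Since rational singularities give $h^1(\mathcal O_Y)=h^1(\mathcal O_X)$, we get $q(Y)=0$. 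Castelnuovo's criterion then gives rationality, using $P_2=0$ and $q=0$. (Alternatively, one could argue via $\kappa(Y)=-\infty$ together with $q=0$.)

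\emph{Picard number.} The identity $\rho(Y)=\rho(X)+N$ is the standard fact for a resolution of a $\Q$-factorial surface. The exceptional curves $E_{p,i}$ are numerically independent, since each $M_p$ is negative definite. Also $f^*$ embeds $N^1(X)_\Q$ orthogonally to them. Finally, every class $L$ on $Y$ decomposes as $f^*f_*L$ plus an exceptional class, using $\Q$-factoriality (Notations). Because $Y$ is rational, numerical and linear equivalence agree, so Picard rank equals the rank of $N^1$.

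\emph{Volume formula.} Noether's formula on the smooth rational surface $Y$ gives $K_Y^2+\rho(Y)=12\chi(\mathcal O_Y)-2=10$, using $b_1=0$ and $b_2=\rho$ (rationality gives $p_g=0$, so $h^{1,1}=b_2$). Now expand $K_Y=f^*K_X-\Delta$. The cross term $f^*K_X\cdot\Delta$ vanishes, so
\[
K_Y^2=K_X^2+\Delta^2=V(X)-\sum_p\delta_p .
\]
Here the components $\Delta_p$ over distinct points are disjoint, and $\delta_p=-\Delta_p^2$ by \eqref{m:eq:delta}. Substituting into Noether's formula gives the claimed identity. The final inequality is immediate: $\rho(X)\ge1$ because $X$ is projective, so $\rho(Y)\ge1+N\ge1+s_p$.

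The main point needing care is rationality, specifically the vanishing $q(Y)=0$. I would cite Kawamata--Viehweg vanishing on the klt surface $X$, applied to $K_X+(-K_X)$, which gives $H^1(\mathcal O_X)=0$. Rational singularities then transfer this to $Y$.
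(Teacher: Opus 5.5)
Your proposal is correct and follows essentially the same route as the paper. Both use Kawamata--Viehweg vanishing together with rational singularities to get $q(Y)=0$, the inequality $K_Y\cdot P=-V<0$ to kill the plurigenera before applying Castelnuovo's criterion, the splitting $L=f^*f_*L+(\text{exceptional})$ with negative definiteness for the rank formula, and Noether's formula combined with squaring $K_Y=f^*K_X-\Delta$ (using $P\cdot\Delta=0$) for the volume identity.
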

\begin{proof}
Kawamata--Viehweg vanishing gives $H^i(X,\mathcal O_X)=0$ for $i>0$ \cite[Theorem~2.70]{KM98}, and rational singularities give the same assertion on $Y$. Since $P$ is nef and $K_Y\cdot P=-P^2<0$, no positive multiple of $K_Y$ is effective. Castelnuovo's rationality criterion applies; see \cite[Chapter~V, Section~6]{Hart77}.

For any integral divisor $L$ on $Y$, $f_*L$ is $\Q$-Cartier, and $L-f^*f_*L$ is exceptional. Thus pullbacks and exceptional classes span $N^1(Y)_{\R}$. Their independence follows by intersecting a linear relation with all exceptional components, using negative definiteness, and then using the projection formula for the pullback part. This proves the rank formula. Noether's formula on a smooth rational surface is $K_Y^2+\rho(Y)=10$ \cite[Chapter~V, Sections~1 and~5]{Hart77}. Squaring \eqref{m:eq:res}, using $P\cdot\Delta=0$, yields $V=K_Y^2+\sum_p\delta_p$. Finally $\rho(X)\ge1$.
\end{proof}

\subsection{The matrix operations used in the proof}
We spell out the elementary operations because the same ones will later
describe both a local quotient and a global contraction. For any invertible
matrix $H$, the Schur complement of $H$ in the symmetric block matrix
\[
 A=\begin{pmatrix}b&-u^{\mathsf T}\\-u&H\end{pmatrix}
\]
is the scalar $s=b-u^{\mathsf T}H^{-1}u$. Multiplication by a triangular
matrix of determinant one gives
\[
 \begin{pmatrix}1&u^{\mathsf T}H^{-1}\\0&I\end{pmatrix}A
 =\begin{pmatrix}s&0\\-u&H\end{pmatrix}.
\]
Thus $\det A=(\det H)s$. Completing the square gives the accompanying
positivity test:
\[
 \begin{pmatrix}x&y^{\mathsf T}\end{pmatrix}
 A\binom{x}{y}
 =(y-xH^{-1}u)^{\mathsf T}H(y-xH^{-1}u)+sx^2.
\]
If $H$ is positive definite, $A$ is positive definite exactly when $s>0$.

For a chain with diagonal entries $b_1,\ldots,b_s$ and adjacent entries
$-1$, expanding the last row gives
$d_j=b_jd_{j-1}-d_{j-2}$. The inequalities $d_j>d_{j-1}$ and
$d_j\ge j+1$ follow without an estimate on the $b_j$: start with
$d_0=1$, $d_1\ge2$, and observe
\[
 d_j-d_{j-1}=(b_j-2)d_{j-1}+(d_{j-1}-d_{j-2})\ge1.
\]
Deleting vertex $i$ separates the chain into its left and right pieces;
the corresponding diagonal cofactor is their determinant product.
Deleting a row at an endpoint and a column at $i$ leaves a unique
connecting path in the determinant expansion. The $-1$ entries along
that path cancel the cofactor sign; the only remaining determinant is
the piece beyond $i$. These observations are the cofactor formulas used
in Lemma~\ref{m:lem:chain}, including empty pieces with determinant one.

For example, the chain $[3,2]$ has
\[
 M=\begin{pmatrix}3&-1\\-1&2\end{pmatrix},\quad
 \det M=5,\quad
 M^{-1}=\frac15\begin{pmatrix}2&1\\1&3\end{pmatrix},\quad
 \alpha=(3/5,4/5).
\]
Its correction is $\delta=2/5$, whereas its determinant is $5$.
In the notation of \eqref{m:eq:sigmatau} below, a curve meeting its first component once has
$\sigma=2/5$, $\tau=2/5$, and $\lambda=4/5=1-1/5$.
This example illustrates why the linear discrepancy correction and the
quadratic pullback correction must both be included in the later budget.

Finally, $M^{-1}=(\det M)^{-1}\adj(M)$ explains all denominator clearing:
the adjugate has integral entries. One clears the denominators of a
specific vector or intersection number, not necessarily the Cartier
index of a divisor everywhere on the surface.

\subsection{Cyclic chains}

Write $M(b_1,\ldots,b_s)$ for the tridiagonal matrix with diagonal $b_i\ge2$ and adjacent entries $-1$. An empty chain has determinant one. Expansion along an end gives the continuant recurrence
\[
 d_0=1,\quad d_1=b_1,\quad d_j=b_jd_{j-1}-d_{j-2}.
\]
Induction gives $d_j>d_{j-1}$ and $d_j\ge j+1$. For a cyclic quotient of type $\frac1r(1,a)$, the Hirzebruch--Jung construction identifies $r$ with $\det M$ \cite[Sections~2--3]{ReidHJ}.

\begin{lemma}\label{m:lem:chain}
For a cyclic chain, put $\alpha_0=\alpha_{s+1}=1$. Then
\begin{equation}\label{m:eq:wronskian}
 r=\sum_{i=0}^s\frac{1}{\alpha_i\alpha_{i+1}}
 \le(s+1)\eps^{-2}.
\end{equation}
If $L_i$ and $R_i$ are the determinants strictly to the left and right of vertex $i$, respectively, then
\begin{equation}\label{m:eq:pointed}
 (M^{-1})_{ii}=\frac{L_iR_i}{r},\qquad
 \alpha_i=\frac{L_i+R_i}{r}.
\end{equation}
\end{lemma}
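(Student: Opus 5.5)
The plan is to prove the pointed formulas \eqref{m:eq:pointed} first, using the cofactor descriptions from the matrix subsection, and then derive the Wronskian identity \eqref{m:eq:wronskian} from them by telescoping.

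\textbf{Pointed formulas.} For the diagonal entry, $(M^{-1})_{ii}=\adj(M)_{ii}/r$. The diagonal cofactor at $i$ is the determinant of the matrix with vertex $i$ deleted. That chain splits into its left and right pieces, so this cofactor equals $L_iR_i$.

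For the discrepancy, rewrite \eqref{m:eq:Md} in terms of $\alpha$. On a chain, \eqref{m:eq:valency} reads $b_i\alpha_i-\alpha_{i-1}-\alpha_{i+1}=0$, with the convention $\alpha_0=\alpha_{s+1}=1$. Equivalently,
\[
 M\alpha=e_1+e_s,
\]
where the endpoint valency defect is absorbed by the boundary values. Hence $\alpha=M^{-1}(e_1+e_s)$, so that
\[
 \alpha_i=\frac{\adj(M)_{i1}+\adj(M)_{is}}{r}.
\]
By the endpoint cofactor rule, $\adj(M)_{i1}$ is the determinant of the piece beyond $i$ as seen from vertex $1$, namely $R_i$. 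Symmetrically, $\adj(M)_{is}=L_i$. This gives $\alpha_i=(L_i+R_i)/r$. The same formula holds at $i=0$ and $i=s+1$, where one of $L,R$ is empty and the other is the whole chain: for example $\alpha_0=(1\cdot 0+r)/r$ once we set $L_0=0$ and $R_0=r$.

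\textbf{Wronskian identity.} Write $P_i=L_{i+1}$, the determinant of the chain $b_1,\ldots,b_i$, and $Q_i=R_i$. Then $r\alpha_i=P_{i-1}+Q_i$. The key step is the continuant identity
\[
 r\alpha_i\cdot r\alpha_{i+1}\cdot\Bigl(\frac{P_i}{r\alpha_{i+1}}-\frac{P_{i-1}}{r\alpha_i}\Bigr)=r,
\]
that is,
\[
 P_i(P_{i-1}+Q_i)-P_{i-1}(P_i+Q_{i+1})=P_iQ_i-P_{i-1}Q_{i+1}=r.
\]
This is the standard fact that $P_iQ_i-P_{i-1}Q_{i+1}$ is independent of $i$: expanding $r$ along the edge between $i$ and $i+1$ gives $r=P_iQ_i-P_{i-1}Q_{i+1}$ directly, since a tridiagonal determinant splits at an edge with a correction from deleting both endpoints. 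It follows that
\[
 \frac1{\alpha_i\alpha_{i+1}}=r\Bigl(\frac{P_i}{P_i+Q_{i+1}}-\frac{P_{i-1}}{P_{i-1}+Q_i}\Bigr).
\]
Summing from $i=0$ to $s$ telescopes between the ratio $0$ (at $P_{-1}=0$) and the ratio $1$ (at $Q_{s+1}=0$). The sum is therefore $r$.

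\textbf{Bound and main obstacle.} The inequality $r\le(s+1)\eps^{-2}$ is immediate: there are $s+1$ terms, and each satisfies $\alpha_i\alpha_{i+1}\ge\eps^2$, using $\alpha_i\ge\eps$ from Lemma~\ref{m:lem:positive} and $\alpha_0=\alpha_{s+1}=1$. The main obstacle is bookkeeping: fixing the index conventions for the empty pieces at the boundary, and checking the edge-expansion identity $r=P_iQ_i-P_{i-1}Q_{i+1}$ with its sign. I would verify both on the example $[3,2]$, where $r=5$ and the three terms $5/3$, $25/12$, and $5/4$ sum to $5$.
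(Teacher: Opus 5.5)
Your proof is correct. The pointed formulas are proved exactly as in the paper: the diagonal cofactor is $L_iR_i$, the end-column cofactors are $R_i$ and $L_i$, and $M\alpha=e_1+e_s$.

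For the Wronskian identity, your telescoping sum is in fact the paper's. Its auxiliary sequence $u_0=0$, $u_1=1$, $u_{i+1}=b_iu_i-u_{i-1}$ satisfies $u_{i+1}=P_i$. Your relation $P_i\alpha_i-P_{i-1}\alpha_{i+1}=1$ is therefore the paper's $\alpha_iu_{i+1}-\alpha_{i+1}u_i=1$, and both sums telescope to $P_s/\alpha_{s+1}=r$.

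The difference is how that relation is justified.
\begin{itemize}
\item The paper notes that $\alpha$ and $u$ satisfy the same recurrence $x_{i+1}=b_ix_i-x_{i-1}$. Their discrete Wronskian is therefore constant, and it equals its value $1$ at $i=0$. This uses only the discrepancy recurrence and is independent of the cofactor formulas.
\item You instead substitute $r\alpha_i=P_{i-1}+Q_i$ and invoke the edge-splitting identity $r=P_iQ_i-P_{i-1}Q_{i+1}$ for tridiagonal determinants. That is an extra, though standard, fact, and it makes the Wronskian part logically depend on the pointed formula. In exchange, each term $1/(\alpha_i\alpha_{i+1})$ appears explicitly as $r$ times a difference of subdeterminant ratios.
\end{itemize}

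One small wording point concerns the phantom indices. There you need $L_0=P_{-1}=0$ and $R_{s+1}=Q_{s+1}=0$. This is the continuant convention $d_{-1}=0$, not the ``empty chain'' convention: an empty chain has determinant one, as for $L_1$ and $R_s$. The values you actually use are the right ones, so only the phrase ``one of $L,R$ is empty'' should be adjusted.
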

\begin{proof}
Equation~\eqref{m:eq:valency} becomes $b_i\alpha_i=\alpha_{i-1}+\alpha_{i+1}$, including the length-one case. Define $u_0=0$, $u_1=1$, and $u_{i+1}=b_i u_i-u_{i-1}$, so $u_{s+1}=r$. The common recurrence gives
\[
 \alpha_i u_{i+1}-\alpha_{i+1}u_i
 =\alpha_{i-1}u_i-\alpha_i u_{i-1}=1.
\]
Divide by $\alpha_i\alpha_{i+1}$ and sum. The resulting telescoping sum is $u_{s+1}/\alpha_{s+1}-u_0/\alpha_0=r$, proving \eqref{m:eq:wronskian}.

The diagonal cofactor at $i$ is $L_iR_i$. The end-column cofactors give $(M^{-1})_{i1}=R_i/r$ and $(M^{-1})_{is}=L_i/r$: expanding the relevant minor along the connecting segment leaves exactly the indicated subchain determinant. Since $M\alpha=e_1+e_s$, these formulas give \eqref{m:eq:pointed}. When $s=1$, the right side is $2e_1$, as required.
\end{proof}

\subsection{Noncyclic forks: determinant and inverse-column estimates}

Let $0$ be the central vertex of a noncyclic resolution fork, with three nonempty arms of matrices $M_j$. Their determinants are $r_j\ge2$. Put $q_j$ equal to the determinant remaining after deleting the vertex next to the center; thus $1\le q_j<r_j$. Set
\[
 A=b_0-\sum_{j=1}^3q_j/r_j.
\]

\begin{lemma}\label{m:lem:fork}
For a noncyclic quotient point,
\begin{equation}\label{m:eq:fork}
 D=r_1r_2r_3A,\qquad
 A\alpha_0=\sum_{j=1}^3\frac1{r_j}-1,\qquad
 D\le4/\eps.
\end{equation}
For every vertex $i$ of the fork,
\begin{equation}\label{m:eq:fork-diagonal}
 \alpha_i\le(M^{-1})_{ii}.
\end{equation}
\end{lemma}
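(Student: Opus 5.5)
The plan is to verify the three identities in \eqref{m:eq:fork} by reducing everything to the arm chains through the Schur-complement and cofactor formulas already recorded, and then to prove \eqref{m:eq:fork-diagonal} by comparing the discrepancy vector with a column of $M^{-1}$ through a discrete maximum principle on the arms. Throughout I order each arm starting at the vertex adjacent to the centre. Then $(M_j^{-1})_{11}=q_j/r_j$, since the diagonal cofactor at the first vertex is the determinant of the rest of the arm, namely $q_j$. Also $(M_j^{-1})_{1,\mathrm{end}}=1/r_j$, by the end-column cofactor of Lemma~\ref{m:lem:chain}, where the piece beyond the endpoint is empty and has determinant one.

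\emph{Determinant and central discrepancy.} Take the Schur complement of the block $H=\bigoplus_j M_j$ in $M$, with $u=\sum_j e^{(j)}_1$. This gives $D=(\prod_j r_j)\bigl(b_0-\sum_j q_j/r_j\bigr)=r_1r_2r_3A$. Positive definiteness of $M$ forces $A>0$. For the discrepancies, \eqref{m:eq:valency} says that on arm $j$ the vector $\alpha^{(j)}$ satisfies $M_j\alpha^{(j)}=\alpha_0e_1+e_{\mathrm{end}}$, because the valency defect $2-v=1$ sits at the leaf. Hence the arm vertex adjacent to the centre has $\alpha=\alpha_0q_j/r_j+1/r_j$. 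Substituting this into the central equation $b_0\alpha_0=\sum(\text{neighbours})-1$ yields $A\alpha_0=\sum_j 1/r_j-1$. In particular $\sum_j 1/r_j>1$, so $(r_1,r_2,r_3)$ is a platonic triple.

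\emph{The bound $D\le4/\eps$.} From the two identities,
\[
D=\frac{r_1r_2r_3}{\alpha_0}\Bigl(\sum_j\frac1{r_j}-1\Bigr)=\frac{r_1r_2+r_1r_3+r_2r_3-r_1r_2r_3}{\alpha_0}.
\]
Using $\alpha_0\ge\eps$, it remains to check that the numerator is at most $4$ on the platonic triples. For $(2,2,n)$ it equals exactly $4$, and for $(2,3,3)$, $(2,3,4)$, $(2,3,5)$ it equals $3,2,1$ respectively.

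\emph{The diagonal inequality.} By \eqref{m:eq:valency}, $M\alpha=w$, where $w$ equals $+1$ at the three leaves, $-1$ at the centre, and $0$ elsewhere. Let $x=M^{-1}e_i$, which is positive by Lemma~\ref{m:lem:positive}. Then
\[
\alpha_i=x^{\mathsf T}w=\sum_{\text{leaves}}x_\ell-x_0 .
\]
The key step is a maximum principle along the arms, which I expect to be the main point needing care.

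On an arm not containing $i$, the vector $x$ solves the homogeneous chain equation with boundary value $x_0$ at the centre and $0$ beyond the leaf. By the same cofactor formula this gives $x_\ell=x_0/r_j$.

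On the segment from $i$ to the leaf of its own arm, $x$ is a chain solution with $b\ge2$ and vanishing outer boundary, so it is monotone decreasing towards the leaf. Hence $x_\ell\le x_i$.

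Now split into cases. If $i$ lies on arm $j_0$, the other two arms contribute $x_0(1/r_{j}+1/r_{k})-x_0\le0$, since $r_j,r_k\ge2$; therefore $\alpha_i\le x_i=(M^{-1})_{ii}$. If $i=0$, then $\alpha_0=x_0(\sum_j 1/r_j-1)\le x_0/2<x_0$, since $\sum_j 1/r_j\le3/2$. This proves \eqref{m:eq:fork-diagonal}.
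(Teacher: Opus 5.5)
Your proposal is correct and follows the paper's proof essentially step by step. The shared steps are: the Schur complement of the arm blocks giving $D=r_1r_2r_3A$; the arm equations $M_j\alpha^{(j)}=\alpha_0e_1+e_{\ell_j}$ substituted into the central equation; the evaluation $D\alpha_0\in\{4,3,2,1\}$ on the platonic triples; and the identity $\alpha_i=\sum_{\ell}x_\ell-x_0$ for $x=M^{-1}e_i$, with source-free arms giving $x_\ell=x_0/r_j$.

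The only difference is cosmetic. On the arm containing $i$, you bound the tip value by monotonicity of the source-free tail, whereas the paper computes it exactly as $x_i/T$ with $T$ the tail determinant; these express the same fact.
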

\begin{proof}
Eliminate the three arm blocks of $M$. Their Schur complement is $A$, proving the determinant formula and $A>0$. On arm $j$, the discrepancy equation is
\[
 M_j\alpha^{(j)}=\alpha_0 e_1+e_{\ell_j}.
\]
The first coefficient is $(q_j\alpha_0+1)/r_j$ by the same cofactors as above. Substitution into the central equation $b_0\alpha_0=\sum_j\alpha_{j,1}-1$ proves the second identity in \eqref{m:eq:fork}. Hence $\sum_j1/r_j>1$. Ordering the three integers gives exactly
\[
 (r_1,r_2,r_3)=(2,2,t)\ (t\ge2),\ (2,3,3),\ (2,3,4),\ (2,3,5).
\]
Indeed $r_1\ge3$ is impossible, and when $r_1=2$ one has either $r_2=2$, or $r_2=3$ and $r_3<6$. Multiplying the central equation by $r_1r_2r_3$ gives, respectively,
\[
 D\alpha_0=4,\ 3,\ 2,\ 1.
\]
Since $\alpha_0\ge\eps$, the determinant bound follows.

For \eqref{m:eq:fork-diagonal}, fix $i$ and write $u=M^{-1}e_i$. Its entries are positive. Denote the outer tips by $\ell_1,\ell_2,\ell_3$. Symmetry of the inverse and \eqref{m:eq:valency} give
\begin{equation}\label{m:eq:fork-column}
 \alpha_i=u_{\ell_1}+u_{\ell_2}+u_{\ell_3}-u_0.
\end{equation}
We check both possible positions of the source $e_i$.

If $i=0$, each arm is source-free, so $M_j u^{(j)}=u_0e_1$ and its tip value is $u_{\ell_j}=u_0/r_j$. Therefore
\[
 \alpha_0=u_0\left(\sum_j1/r_j-1\right)\le u_0/2\le u_0=(M^{-1})_{00}.
\]
Here only $r_j\ge2$ is needed for the upper bound.

If $i$ lies on arm 1, the other two arms are source-free. Equation~\eqref{m:eq:fork-column} becomes
\[
 \alpha_i=u_{\ell_1}-u_0(1-1/r_2-1/r_3)\le u_{\ell_1}.
\]
The tail strictly beyond $i$ on arm 1 is also source-free. If its determinant is $T$, its tip value is $u_i/T$, using the end-to-end inverse entry of that tail. This remains true with $T=1$ when $i$ is itself the tip. Thus $u_{\ell_1}\le u_i=(M^{-1})_{ii}$. This proves the assertion for every vertex, without assuming that the center computes the minimal log discrepancy.
\end{proof}

\begin{remark}\label{o:rem:det-not-order}
For noncyclic quotients, $D_p$ is not in general the order of the quotient group. Its role here is solely the integrality in \eqref{m:eq:Md}. Lemma~\ref{m:lem:fork} does not assume that the central curve computes the minimal log discrepancy, and needs neither a Fano hypothesis nor a Picard-number bound.
\end{remark}

\begin{remark}
The local constant four is attained by the quotient fork with central
square $-b$ and three single $(-2)$-curve arms, $b\ge2$; these graphs are
realized by \cite[Theorem~8.31]{Riem}. Its determinant and discrepancies are
$D=4(2b-3)$, $\alpha_0=(2b-3)^{-1}$, and
$\alpha_{j,1}=(b-1)/(2b-3)$. Further point blowups cannot decrease the
minimum discrepancy, since their discrepancies are sums of two existing
ones, or one plus an existing one. Thus $D=4/\varepsilon$ for
$\varepsilon=(2b-3)^{-1}$.
\end{remark}

\subsection{Complements}\label{m:sec:inputs}
An $n$-complement of $(X,0)$ is an effective $\Q$-divisor $B$ such that
$(X,B)$ is log canonical and $n(K_X+B)\sim0$ as an integral Cartier
divisor. In particular, $nB$ is integral. A Fano surface is
\emph{exceptional} if $(X,G)$ is klt for every effective
$G\sim_{\R}-K_X$, as in \cite[Definition~2.1]{Liu23}.

\begin{proposition}[Surface complements]\label{prop:complement-input}
An exceptional klt Fano surface satisfies $V(X)\ge\cExc$.
A nonexceptional klt Fano surface has an $n$-complement for some
$n\in\{1,2,3,4,6\}$.
\end{proposition}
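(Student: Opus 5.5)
This proposition is essentially an import of known results, so my plan is to assemble it from the literature rather than prove anything new. The two assertions are handled separately, according to whether $X$ is exceptional.

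\textbf{Exceptional case.} For an exceptional klt Fano surface I will invoke \cite[Corollary~4.5]{Liu23}. It treats exceptional Fano surfaces effectively: exceptionality forces the minimal log discrepancy to be bounded below by an absolute constant, namely $1/42$ from the $\frac{1}{42}$-lc property of exceptional del Pezzo surfaces. Feeding this into the Cartier-index estimate recalled in the introduction,
\[
V(X)\ge\tfrac12(\eps/2)^{128\eps^{-5}},
\]
with $\eps=1/42$, gives exactly $V(X)\ge\frac12\cdot84^{-128\cdot42^5}=\cExc$. The only thing to check here is that the constant in \eqref{eq:constants} matches this substitution. It does, since $(1/84)^{128\cdot 42^5}$ is precisely the term appearing there.

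\textbf{Nonexceptional case.} By definition there is an effective $G\sim_{\R}-K_X$ with $(X,G)$ not klt. I will apply Shokurov's boundedness of complements for surfaces, in the form recorded in \cite{Liu23}, to obtain an $n$-complement with $n\in\{1,2,3,4,6\}$.

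The intended route goes through a log canonical threshold. First take the lc threshold $t\le1$ of $G$, so that $(X,tG)$ is lc but not klt. Then pass to $K_X+tG+(1-t)G'$ with $G'\sim_{\R}-K_X$ general, or alternatively run the standard argument that produces a non-klt lc complement. The final step is to invoke the classification of surface complements: nonexceptional surfaces admit regular complements, with $n$ in the regular set $\{1,2,3,4,6\}$.

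\textbf{Expected obstacle.} The main difficulty is bookkeeping, not proof: making sure the cited statements use the same notion of exceptionality (klt for every $\R$-complement), and that the complement is taken for the pair $(X,0)$ with $n(K_X+B)\sim0$ Cartier. I would cite Shokurov and Prokhorov's "Lectures on complements" for the regular-complement statement. This is legitimate because $X$ is klt rational, so $\R$-linear and linear equivalence of $-K_X$-complements agree after the standard rationalization.
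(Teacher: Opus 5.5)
Your proposal is correct and follows the paper's proof. The exceptional case is exactly Liu's Corollary~4.5: the $\frac1{42}$-lc property combined with the index bound $I\le2(2/\eps)^{128\eps^{-5}}$, where $IV\in\Z_{>0}$ gives $V\ge\cExc$. The nonexceptional case is Shokurov's regular-complement theorem in Liu's form, and your lc-threshold construction of a non-klt $\R$-complement plays the role of the paper's check of the $\R$-complementarity hypothesis.

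Your closing remark about $\R$-linear versus linear equivalence is unnecessary, and as stated it is not quite right, since $\mathrm{Cl}(X)$ of a klt del Pezzo surface can have torsion; the cited complement theorem itself delivers $n(K_X+B)\sim0$.
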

\begin{proof}
The second assertion is Shokurov's complement theorem
\cite[Inductive Theorem~2.3]{Sho00}, in the form
\cite[Lemma~3.1]{Liu23}. Its $\R$-complementarity hypothesis holds by
taking a general member of a sufficiently divisible anticanonical system
and dividing by its degree. For exceptional surfaces,
\cite[Corollary~4.5]{Liu23} bounds the Cartier index of $K_X$ by
$2\cdot84^{128\cdot42^5}$. If $IK_X$ is Cartier, then
$IV=(IK_X)\cdot K_X$ is a positive integer, which proves the first assertion.
\end{proof}

\begin{lemma}\label{lem:complement-grid}
Let $B$ be an $n$-complement and define $B_Y$ by
$K_Y+B_Y=f^*(K_X+B)$. Then
\begin{equation}\label{o:eq:BY}
 B_Y=\Delta+f^*B\ge0,\qquad nB_Y\text{ is integral},
 \qquad 0\le\coeff_D(B_Y)\le1.
\end{equation}
There is a nonexceptional integral component $C\subset B_Y$, with image
$C_X=f(C)$, such that, writing $b=\coeff_C(B_Y)$ and $t=P\cdot C$,
\begin{equation}\label{o:eq:coefficient-grid}
 b\in\{1/n,2/n,\ldots,1\},\qquad 0<t\le nV,\qquad V\ge bt.
\end{equation}
If $C^2>0$, then $V\ge1/n^2$.
\end{lemma}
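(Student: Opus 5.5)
The plan has four steps: identify $B_Y$ from the two pullback formulas, read off its integrality and coefficient bounds, pick a suitable nonexceptional component, and then estimate $t$ and $bt$ by intersection numbers against the nef divisor $P$.

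\emph{Step 1: the formula and coefficients of $B_Y$.} Subtract $K_Y+\Delta=f^*K_X$ from $K_Y+B_Y=f^*(K_X+B)$. This gives $B_Y=\Delta+f^*B$. Both summands are effective: $\Delta$ by Lemma~\ref{m:lem:positive}, and $f^*B$ because $B\ge0$. For integrality, $n(K_X+B)\sim0$ is Cartier, so $nf^*(K_X+B)$ is an integral divisor linearly equivalent to zero. Since $nK_Y$ is integral, $nB_Y$ is integral. The upper bound $\coeff_D(B_Y)\le1$ holds because $(Y,B_Y)$ is crepant to the lc pair $(X,B)$, so its log discrepancies are nonnegative.

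\emph{Step 2: choosing $C$.} We have $B\ne0$: otherwise $nK_X\sim0$, contradicting bigness of $-K_X$. Hence $f^*B$ has a nonexceptional component, namely the strict transform of a component of $B$. Its coefficient in $B_Y$ equals its coefficient in $B$, which lies in $\frac1n\Z$, is positive, and is at most $1$. This gives the grid for $b$.

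\emph{Step 3: bounds on $t$.} Since $P$ is nef, $P\cdot C\ge0$. Positivity $t>0$ holds because $P\cdot C=-K_X\cdot C_X$ by the projection formula, and this is positive when $-K_X$ is ample. In the weak case we first pass to the anticanonical model of Lemma~\ref{m:lem:weak-model}, or we choose $C$ so that $C_X$ is not contracted. I expect this to be the delicate point. The safest route is to note that the $P$-trivial curves have negative definite span, while $B\equiv-K_X$ gives $P\cdot f^*B=V>0$, so some component of $f^*B$ has $P\cdot C>0$. That component cannot be $f$-exceptional, since $P$ is trivial on those. So I choose $C$ among the nonexceptional components with $P\cdot C>0$. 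Then
\[
V=P\cdot f^*B=\sum_D\coeff_D(f^*B)\,(P\cdot D)\ge b\,t,
\]
because every term is nonnegative. Combined with $b\ge1/n$, this yields $t\le nV$.

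\emph{Step 4: the case $C^2>0$.} The divisor $nP=-nf^*K_X$ has integral intersection with $C$, since $n(K_X+B)\sim0$ makes $-nK_X\sim nB$ and $f^*(nB)\cdot C\in\Z$ would follow if $nB$ were Cartier. Instead I use the Hodge index theorem: $P^2\,C^2\le(P\cdot C)^2$, so $V\le t^2/C^2\le t^2\le n^2V^2$ as $C^2\ge1$ is integral. Therefore $V\ge1/n^2$.
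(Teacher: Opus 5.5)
Your proposal is correct. Steps 1--3 follow the paper's argument:
\begin{itemize}
\item $B_Y=\Delta+f^*B$, with both summands effective;
\item $nB_Y$ is integral, by pulling back $n(K_X+B)=\operatorname{div}(\varphi)$;
\item a component $C$ with $P\cdot C>0$ exists because $P\cdot f^*B=V>0$ and $P$ is nef, and it is nonexceptional.
\end{itemize}
The paper phrases the last point as $P\cdot B_Y=P^2$, which is the same statement since $P\cdot\Delta=0$. Your remark on the negative definite span of the $P$-trivial curves, and your initial hedging about the weak case, are not needed.

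For the final assertion you take a different route. You combine the Hodge index inequality $P^2C^2\le(P\cdot C)^2$ with $t\le nV$ and $C^2\ge1$, which gives $V\le t^2\le n^2V^2$. The paper instead writes $f^*B=bC+G$ with $G\ge0$ not containing $C$. Since $P\equiv f^*B$, this gives $t=f^*B\cdot C\ge bC^2\ge b$, and then $V\ge bt\ge b^2\ge1/n^2$.

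The paper's argument is more elementary: it needs only $G\cdot C\ge0$, not Hodge index. It also yields the slightly stronger intermediate bounds $t\ge bC^2$ and $V\ge b^2C^2$. Your argument uses only $P^2>0$ and the already-proved inequality $t\le nV$. It would therefore apply to any curve with $C^2>0$ satisfying that inequality, giving $V\ge C^2/n^2$.

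The abandoned sentence at the start of your Step 4, about $nB$ being Cartier, should simply be deleted.
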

\begin{proof}
The pullback of an effective $\Q$-Cartier divisor is effective, as is seen
by pulling back a local equation of a Cartier multiple. Thus the first
identity follows from Lemma~\ref{m:lem:positive}; log canonicity gives the
upper coefficient bound. Pulling back
$n(K_X+B)=\operatorname{div}(\varphi)$ shows that $nB_Y$ is integral.
Since $P\cdot B_Y=P^2>0$ and $P$ is nef, some component has positive
intersection with $P$. It is nonexceptional, and the coefficient bounds
give $V\ge bt$ and $t\le nV$. If $C^2>0$, write $f^*B=bC+G$ with
$G\ge0$ and $C\not\subset\Supp G$. Then
$t\ge bC^2\ge b$, since $C^2$ is integral, and $V\ge b^2\ge1/n^2$.
\end{proof}
The use of the minimal resolution is relevant: on an arbitrary further
resolution, the crepant boundary of a complement need not be effective.
\subsection{Pointed intersection corrections}\label{m:sec:fourteen}

Let $C_X$ be an integral curve, with strict transform $C\subset Y$. At a singular point $p\in C_X$, define the nonzero nonnegative integral vector and three rational numbers
\begin{equation}\label{m:eq:sigmatau}
 m_p=(C\cdot E_{p,i})_i,\qquad
 \sigma_p=(1-\alpha_p)^{\mathsf T}m_p,\quad
 \tau_p=m_p^{\mathsf T}M_p^{-1}m_p,\quad
 \lambda_p=\sigma_p+\tau_p.
\end{equation}
The vector is nonzero because the proper map $C\to C_X$ is surjective and the fiber over $p$ lies in the exceptional divisor. All these intersections are taken on the smooth surface $Y$.

\begin{lemma}[Pointed local bound]\label{m:lem:lambda}
At a cyclic point of determinant $r$, one has
\begin{equation}\label{m:eq:lambda-cyclic}
 \lambda_p\ge1-1/r.
\end{equation}
At a noncyclic point, $\lambda_p\ge1$. In particular, each noncanonical point contributes at least $2/3$, and each singular point, including a canonical point, contributes at least $1/2$.

For a single contact $m_p=e_i$ in a cyclic chain,
\begin{equation}\label{m:eq:lambda-exact}
 \lambda_p=1-\frac1r+\frac{(L_i-1)(R_i-1)}r.
\end{equation}
If the contact is interior and $(L_i,R_i)\ne(2,2)$, then
\begin{equation}\label{m:eq:interior-gap}
 \lambda_p\ge1+\eps/5.
\end{equation}
If $(L_i,R_i)=(2,2)$, then $r\le4/\eps$.
\end{lemma}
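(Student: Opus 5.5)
The plan is to reduce everything to a single contact $m_p=e_i$, where $\lambda_p$ can be computed exactly from Lemma~\ref{m:lem:chain} and Lemma~\ref{m:lem:fork}, and then to pass to general contact vectors by monotonicity.

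\emph{Single contact.} For $m_p=e_i$ one has $\sigma_p=1-\alpha_i$ and $\tau_p=(M_p^{-1})_{ii}$.
\begin{itemize}
\item In a cyclic chain, \eqref{m:eq:pointed} gives
\[
 \lambda_p=1-\frac{L_i+R_i}{r}+\frac{L_iR_i}{r}
 =1-\frac1r+\frac{(L_i-1)(R_i-1)}r .
\]
This is \eqref{m:eq:lambda-exact}.
\item Lemma~\ref{m:lem:chain} uses only the valency equation \eqref{m:eq:valency}, so it also applies at canonical chains. For example, for $A_n$ one has $\alpha_i=(L_i+R_i)/r=1$, as it should be.
\item Since $L_i,R_i\ge1$, the single-contact value is at least $1-1/r$.
\item In a noncyclic fork, \eqref{m:eq:fork-diagonal} gives $\lambda_p=1-\alpha_i+(M_p^{-1})_{ii}\ge1$ directly.
\end{itemize}

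\emph{General contact vector.} Let $m_p$ be an arbitrary nonzero nonnegative integral vector. Choose $i$ with $m_i\ge1$ and write $m_p=e_i+w$ with $w\ge0$. Then $\sigma_p$ increases by $(1-\alpha_p)^{\mathsf T}w\ge0$, because $\alpha\le1$ by Lemma~\ref{m:lem:positive}. Also
\[
 \tau_p=e_i^{\mathsf T}M_p^{-1}e_i+2e_i^{\mathsf T}M_p^{-1}w+w^{\mathsf T}M_p^{-1}w ,
\]
and the last two terms are nonnegative because all entries of $M_p^{-1}$ are positive (Lemma~\ref{m:lem:positive}). Hence $\lambda_p$ is at least its single-contact value. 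This proves \eqref{m:eq:lambda-cyclic} and the noncyclic bound $\lambda_p\ge1$.

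\emph{Numerical consequences.} A cyclic point with $r=2$ is the $[2]$ chain, which is canonical. So a noncanonical cyclic point has $r\ge3$ and $\lambda_p\ge2/3$. Every singular cyclic point has $r\ge2$, so $\lambda_p\ge1/2$. Noncyclic points give $\lambda_p\ge1$ in all cases.

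\emph{Interior gap.} This is the only step needing an estimate; the main point is to control $r$ in terms of $L_i,R_i$. First I would try the $\eps$-lc bound at the contact vertex itself: by \eqref{m:eq:pointed}, $\eps\le\alpha_i=(L_i+R_i)/r$, so
\[
 r\le\frac{L_i+R_i}{\eps}.
\]
If $(L_i,R_i)=(2,2)$, this already gives $r\le4/\eps$.

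Otherwise, interior means $L_i,R_i\ge2$. Without loss of generality $L_i\ge3$. By \eqref{m:eq:lambda-exact},
\[
 \lambda_p-1=\frac{(L_i-1)(R_i-1)-1}{r}\ge\eps\,\frac{(L_i-1)(R_i-1)-1}{L_i+R_i}.
\]
It remains to check the elementary inequality
\[
 \frac{(L-1)(R-1)-1}{L+R}\ge\frac15\qquad\text{for } L\ge3,\ R\ge2 .
\]
The left side is increasing in $R$ for fixed $L\ge3$. At $R=2$ it equals $(L-2)/(L+2)$, which is increasing in $L$ and equals $1/5$ at $L=3$. This gives \eqref{m:eq:interior-gap}.
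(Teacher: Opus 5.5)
Your proof is correct and follows essentially the same route as the paper. You obtain the exact single-contact values from Lemma~\ref{m:lem:chain} and \eqref{m:eq:fork-diagonal}, handle an arbitrary contact vector using entrywise positivity of $M_p^{-1}$ together with $\alpha\le1$, and get the interior gap from $\alpha_i=(L_i+R_i)/r\ge\eps$ and $(LR-L-R)/(L+R)\ge1/5$.

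The only difference is in the reduction step. The paper bounds $m_p^{\mathsf T}M_p^{-1}m_p\ge\sum_i m_i^2(M_p^{-1})_{ii}\ge\sum_i m_i(M_p^{-1})_{ii}$ rather than splitting off a single $e_i$. This also gives the multiplicity estimate $\lambda_p\ge(\sum_i m_i)(1-1/r)$, which is used later to exclude nonunit contacts; your splitting is enough for the lemma as stated.
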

\begin{proof}
Let $d=1-\alpha$ and $Q=M^{-1}$. Since $d\ge0$, all entries of $Q$ are positive, and $m_i^2\ge m_i$ for integral $m_i\ge0$, one has
\begin{equation}\label{m:eq:multi-contact}
 d^{\mathsf T}m+m^{\mathsf T}Qm
 \ge\sum_i m_i(d_i+Q_{ii}).
\end{equation}
For a cyclic graph, Lemma~\ref{m:lem:chain} gives
\[
 d_i+Q_{ii}=1+\frac{L_iR_i-L_i-R_i}{r}
 =1-\frac1r+\frac{(L_i-1)(R_i-1)}r.
\]
For a fork, Lemma~\ref{m:lem:fork} gives $d_i+Q_{ii}\ge1$. A noncanonical cyclic point has $r\ge3$, since the unique cyclic quotient of order two is canonical. These observations prove the general assertions, including the multiplicity estimate
\begin{equation}\label{m:eq:multiplicity}
 \lambda_p\ge\Bigl(\sum_i m_i\Bigr)(1-1/r)
 \quad\text{at a cyclic point}.
\end{equation}

At an interior vertex $L_i,R_i\ge2$. If they are not both two, then
\[
 \frac{L_iR_i-L_i-R_i}{L_i+R_i}\ge\frac15.
\]
To check this, if the smaller integer is two and the other is $u\ge3$, the ratio is $(u-2)/(u+2)\ge1/5$. If both are at least three, the ratio is at least $1/2$, since $1/L_i+1/R_i\le2/3$. As $(L_i+R_i)/r=\alpha_i\ge\eps$, \eqref{m:eq:interior-gap} follows. When both equal two, $\alpha_i=4/r\ge\eps$ proves the final assertion.
\end{proof}

\subsection{The complement curve}

Suppose $X$ has an $n$-complement, $1\le n\le6$. Write its boundary as
\[
 B=bC_X+G_X,\qquad C_X\not\subset\Supp G_X,
\]
where $C_X$ is a component with $t:=(-K_X)\cdot C_X>0$. Such a component exists because $(-K_X)\cdot B=V>0$ and $-K_X$ is nef. The complement properties give
\begin{equation}\label{m:eq:bt}
 1/n\le b\le1,\qquad V\ge bt,\qquad 0<t\le nV.
\end{equation}

\begin{lemma}[Global correction budget]\label{m:lem:budget}
For the notation in \eqref{m:eq:sigmatau}, summed over \emph{all} singular points on $C_X$, one has
\begin{align}
 C_X^2&=C^2+\sum_p\tau_p,\label{m:eq:self-correction}\\
 \sum_p\lambda_p&=2-2\pa(C)+C_X^2-t,\label{m:eq:exact-budget}\\
 \sum_p\lambda_p&\le2+n(n-1)V.\label{m:eq:budget}
\end{align}
\end{lemma}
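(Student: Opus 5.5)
The plan is to compute $f^*C_X$ explicitly on $Y$, then combine adjunction on the smooth surface $Y$ with the complement inequalities \eqref{m:eq:bt}.

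\textbf{Step 1: the self-intersection formula.} Write
\[
 f^*C_X=C+\sum_p\sum_i x_{p,i}E_{p,i}.
\]
Intersecting with each $E_{p,j}$ gives $0=m_p-M_px_p$, hence $x_p=M_p^{-1}m_p$. Then
\[
 C_X^2=f^*C_X\cdot C=C^2+\sum_p m_p^{\mathsf T}M_p^{-1}m_p=C^2+\sum_p\tau_p,
\]
which is \eqref{m:eq:self-correction}. Points of $C_X$ that are not singular contribute nothing.

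\textbf{Step 2: the exact budget.} Adjunction on $Y$ gives $2\pa(C)-2=C^2+K_Y\cdot C$. Substitute $K_Y=f^*K_X-\Delta$. The projection formula gives $f^*K_X\cdot C=K_X\cdot C_X=-t$. By definition of $\sigma_p$, we have $\Delta\cdot C=\sum_p\sigma_p$. Hence
\[
 2\pa(C)-2=C^2-t-\sum_p\sigma_p .
\]
Inserting $C^2=C_X^2-\sum_p\tau_p$ from Step 1 and rearranging gives \eqref{m:eq:exact-budget}.

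\textbf{Step 3: the inequality.} Since $\pa(C)\ge0$, it suffices to show $C_X^2-t\le n(n-1)V$. If $C_X^2\le0$ this is immediate, since $t>0$. Otherwise, use $B=bC_X+G_X$ with $G_X\ge0$ and $C_X\not\subset\Supp G_X$, so $G_X\cdot C_X\ge0$. Then
\[
 t=-K_X\cdot C_X=B\cdot C_X\ge bC_X^2\ge C_X^2/n,
\]
so $C_X^2\le nt$. Together with $t\le nV$ from \eqref{m:eq:bt}, this gives
\[
 C_X^2-t\le(n-1)t\le n(n-1)V.
\]
This is \eqref{m:eq:budget}.

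The only delicate point is Step 3. It needs $-K_X\equiv B$, which follows from $n(K_X+B)\sim0$. It also needs the intersection $G_X\cdot C_X\ge0$, computed on the $\Q$-factorial surface $X$. Both are standard here.
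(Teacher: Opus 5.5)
Your proof is correct and follows essentially the same route as the paper: the pullback $f^*C_X=C+\sum_p M_p^{-1}m_p$, adjunction on $Y$ with $K_Y=f^*K_X-\Delta$, and then $t=B\cdot C_X\ge bC_X^2$ combined with the complement inequalities. The only cosmetic difference is in the last step. You split on the sign of $C_X^2$ so that you can replace $b$ by $1/n$ early, whereas the paper keeps $b$ throughout, uses $C_X^2-t\le(b^{-1}-1)t\le(b^{-1}-1)b^{-1}V$, and only at the end invokes $1\le b^{-1}\le n$.
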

\begin{proof}
As $C_X$ is $\Q$-Cartier, its pullback is
\[
 f^*C_X=C+\sum_{p,i}(M_p^{-1}m_p)_iE_{p,i}.
\]
Indeed, intersection with each exceptional component is zero precisely for these coefficients. Expanding its square gives $C^2+\sum_p m_p^{\mathsf T}M_p^{-1}m_p$, proving \eqref{m:eq:self-correction}.

Adjunction for the integral curve $C$ on the smooth surface $Y$ \cite[Chapter~V, Section~1]{Hart77} gives
\[
 t=(-K_Y-\Delta)\cdot C
   =2-2\pa(C)+C^2-\sum_p\sigma_p.
\]
Combining this with \eqref{m:eq:self-correction} proves \eqref{m:eq:exact-budget}.

Distinct effective $\Q$-Cartier curves on a normal surface have nonnegative intersection: a Cartier multiple of one restricts to the normalization of the other as a nonzero section, whose zeros have nonnegative total degree. Consequently
\[
 t=B\cdot C_X=bC_X^2+G_X\cdot C_X\ge bC_X^2.
\]
Since $\pa(C)\ge0$, \eqref{m:eq:exact-budget} implies
\[
 \sum_p\lambda_p\le2+(b^{-1}-1)t
 \le2+(b^{-1}-1)b^{-1}V\le2+n(n-1)V.
\]
The last step uses $1\le b^{-1}\le n$.
\end{proof}

\begin{lemma}[Only discrepancy denominators are needed]\label{m:lem:J}
Let $T=C_X\cap\Ncan(X)$ and use the convention $\lcm(\varnothing)=1$. Then
\begin{equation}\label{m:eq:J}
 J:=\lcm\{D_p:p\in T\},\qquad Jt\in\Z_{>0},\qquad
 V\ge\frac1{nJ}.
\end{equation}
\end{lemma}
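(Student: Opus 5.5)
We aim to show that $t=P\cdot C=-f^*K_X\cdot C$ has denominator dividing $J$. We do this by computing $t$ on the smooth surface $Y$ and tracking exactly which local denominators enter. Once $Jt\in\Z_{>0}$ is known, we get $t\ge1/J$, and \eqref{m:eq:bt} gives $V\ge bt\ge t/n\ge1/(nJ)$. So the whole content lies in the integrality statement.

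First write
\[
 t=(-K_Y-\Delta)\cdot C=-K_Y\cdot C-\sum_p(1-\alpha_p)^{\mathsf T}m_p=-K_Y\cdot C-\sum_p\sigma_p .
\]
Here $-K_Y\cdot C\in\Z$, since $Y$ is smooth and $C$ is an integral curve. The sum runs over the singular points $p\in C_X$, because $m_p$ is zero at singular points off $C_X$.

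At a canonical point, Lemma~\ref{m:lem:positive} gives $\Delta_p=0$, so $\sigma_p=0$. At a noncanonical point $p\in T$, the same lemma gives $D_p(1-\alpha_p)\in\Z^{s_p}$. Since $m_p$ is an integral vector, $D_p\sigma_p\in\Z$, and therefore $J\sigma_p\in\Z$ because $D_p\mid J$. Summing over $p$ yields $Jt\in\Z$.

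It remains to see positivity. We have $t>0$ by the choice of $C_X$ as a component with $(-K_X)\cdot C_X>0$. Hence $Jt$ is a positive integer, so $t\ge1/J$.

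The point needing care is that we use only $\sigma_p$ and never $\tau_p$. The self-intersection correction $\tau_p$ would introduce denominators at canonical points as well, through $M_p^{-1}$. Computing $t$ via $K_Y+\Delta$ on $Y$ avoids this, since $\Delta$ is supported only over noncanonical points. This also explains why $J$ involves only $T$, with the convention $\lcm(\varnothing)=1$ covering the case where $C_X$ meets no noncanonical point. I expect no genuine obstacle beyond checking this bookkeeping.
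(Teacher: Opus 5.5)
Your proof is correct and follows the paper's argument essentially verbatim: you write $t=-K_Y\cdot C-\sum_{p\in T}\Delta_p\cdot C$ on $Y$, use $\Delta_p=0$ at canonical points and the integrality of $D_p\Delta_p$ from Lemma~\ref{m:lem:positive} to get $Jt\in\Z$, and conclude from positivity of $t$ together with $V\ge bt\ge t/n$. Your remark is also accurate: the bound only works because $t$ is computed through $K_Y+\Delta$ rather than through the self-intersection corrections $\tau_p$, and this is what keeps canonical points out of $J$.
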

\begin{proof}
On $Y$, $t=-K_Y\cdot C-\sum_{p\in T}\Delta_p\cdot C$. The first intersection is integral. Lemma~\ref{m:lem:positive} makes every remaining term integral after multiplication by $J$. Canonical points have $\Delta_p=0$, and exceptional fibers over points not on $C_X$ are disjoint from $C$. Positivity of $t$ and \eqref{m:eq:bt} prove the claim. The number $J$ is only a sufficient denominator; it need not be the reduced denominator of $t$.
\end{proof}

\subsection{A discrete gap at a noncyclic contact}
For a unit contact at vertex $i$, put
\[
 \gamma_i=(M^{-1})_{ii}-\alpha_i=\lambda_i-1.
\]
For a cyclic endpoint, $\gamma_i=-1/r$. For a cyclic interior vertex,
$\gamma_i=(LR-L-R)/r$. It is zero when $L=R=2$, in which case
$r\le4/\eps$; otherwise it is at least $\eps/5$.
These are the calculations in Lemma~\ref{m:lem:lambda}.

\begin{lemma}[The noncyclic gap]\label{lem:fork-gap}
At every vertex of a noncyclic quotient resolution,
\[
 \gamma_i=0\quad\hbox{or}\quad\gamma_i\ge1/D\ge\eps/4.
\]
\end{lemma}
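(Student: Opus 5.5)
The plan is to combine the two facts about noncyclic forks already proved, the sign information of Lemma~\ref{m:lem:fork} and the integrality of Lemma~\ref{m:lem:positive}, with no new local computation. The claimed dichotomy should follow from showing that $D\gamma_i$ is a \emph{nonnegative integer}. The argument is uniform in the vertex $i$, so I will not split into the cases ``centre'' and ``arm''.

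\emph{Nonnegativity.} By \eqref{m:eq:fork-diagonal} in Lemma~\ref{m:lem:fork}, $\alpha_i\le(M^{-1})_{ii}$ holds at every vertex of the fork. Hence
\[
\gamma_i=(M^{-1})_{ii}-\alpha_i\ge0.
\]

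\emph{Integrality.} Write $M^{-1}=D^{-1}\adj(M)$. The adjugate has integral entries, so $D(M^{-1})_{ii}\in\Z$. Next, $\alpha_i=1-d_i$ with $d=1-\alpha$. By Lemma~\ref{m:lem:positive} (multiplying \eqref{m:eq:Md} by the adjugate), $Dd\in\Z^{s}$, and therefore $D\alpha_i=D-Dd_i\in\Z$. It follows that $D\gamma_i\in\Z_{\ge0}$. So either $\gamma_i=0$, or $D\gamma_i\ge1$, that is, $\gamma_i\ge1/D$.

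\emph{The $\eps$-bound.} It remains to use $D\le4/\eps$ from \eqref{m:eq:fork}, which gives $1/D\ge\eps/4$. I will check that this bound is available in the generality needed. Its proof only uses the identity $D\alpha_0\in\{4,3,2,1\}$ from the classification of the arm determinants, together with $\alpha_0\ge\eps$. This covers canonical forks as well, where $\alpha_0=1$ and the statement is trivially true.

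I do not expect a real obstacle. The only point needing care is that $D$ here is the determinant $\det M_p$ and not the group order (Remark~\ref{o:rem:det-not-order}). This is harmless, because both integrality statements above refer to $\det M_p$ through the adjugate.
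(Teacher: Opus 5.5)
Your proof is correct and is essentially the paper's argument. Lemma~\ref{m:lem:fork} gives both $\gamma_i\ge0$ and $D\le4/\eps$; the adjugate gives $D(M^{-1})_{ii}\in\Z$ and the discrepancy equation gives $D\alpha_i\in\Z$, so $D\gamma_i$ is a nonnegative integer and hence either $0$ or at least $1$.
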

\begin{proof}
Lemma~\ref{m:lem:fork} gives $\gamma_i\ge0$ and $D\le4/\eps$.
The adjugate formula gives $D(M^{-1})_{ii}\in\Z$, and the discrepancy
equation gives $D\alpha_i\in\Z$. Thus $D\gamma_i$ is a nonnegative
integer. If it is nonzero, it is at least one.
\end{proof}

There is also a precise description of equality, although the argument
below needs only the gap. In the notation of the inverse-column proof of
Lemma~\ref{m:lem:fork}, a source at the center has
$\alpha_0\le u_0/2$, hence positive gap. If the source is on arm 1,
and the determinant of the tail beyond it is $T$, then
\[
 \gamma_i=u_i(1-1/T)+u_0(1-1/r_2-1/r_3).
\]
Both summands are nonnegative and $u_i,u_0>0$. Consequently equality
holds precisely at the outer tip of arm 1 when the other two arms have
determinant two. Each such arm consists of a single $(-2)$-curve, since
a chain of length $s$ has determinant at least $s+1$.

\section{From index bounds to polynomial volume bounds}\label{o:sec:global}
\subsection{The effective Picard-number input}
We recall the numerical result from surface boundedness in the precise form needed here.

\begin{theorem}[Alexeev--Mori]\label{o:thm:AM}
Let $0<\delta<1/\sqrt3$, and let $(S,\Theta)$ be a projective $\delta$-lc surface pair with $\Theta\ge0$ and $-(K_S+\Theta)$ ample. If $\widetilde S\to S$ is the minimal resolution of the underlying surface, then
\[
 \rho(\widetilde S)\le128\delta^{-5}.
\]
\end{theorem}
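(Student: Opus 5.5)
The plan is \emph{not} to reprove this from scratch. The statement is the effective Picard-number bound of Alexeev--Mori \cite{AM04}, and it is used here only as a numerical input. I would therefore invoke it in the form recorded in \cite[Lemma~2.7]{Liu23} (which is where the constant $128\delta^{-5}$ and the range $\delta<1/\sqrt3$ come from) and check only that our hypotheses match: a projective $\delta$-lc pair $(S,\Theta)$ with $\Theta\ge0$ and $-(K_S+\Theta)$ ample. For completeness I would indicate the structure of the argument, as follows.

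\textbf{Step 1: reduce to the minimal resolution.} Pass to the minimal resolution $\widetilde S\to S$ and write $K_{\widetilde S}+\widetilde\Theta$ for the crepant pullback of $K_S+\Theta$. By the minimality argument of Lemma~\ref{lem:complement-grid}, $\widetilde\Theta\ge0$, and every coefficient is at most $1-\delta$. The divisor $-(K_{\widetilde S}+\widetilde\Theta)$ is nef and big, so $\widetilde S$ is rational as in Lemma~\ref{m:lem:noether}. Every negative curve $E$ on $\widetilde S$ that is not a $(-1)$-curve satisfies $E^2\ge -2/\delta$. This follows from adjunction applied to $E$ with coefficient at most $1-\delta$, together with $(K_{\widetilde S}+\widetilde\Theta)\cdot E\le0$ for exceptional $E$, or a direct check otherwise.

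\textbf{Step 2: run the MMP and set up the count.} Contract $(-1)$-curves successively, pushing the boundary forward. Each pushforward remains $\delta$-lc with anti-log-canonical divisor big. The process ends at $\PP^2$ or at a Hirzebruch surface $\mathbb F_m$, and the Step~1 bound gives $m\le 2/\delta$. It then suffices to bound the number of blowups. In the ruled case, use the ruling: for a general fiber $F$ one has $(-(K+\Theta))\cdot F=2-\Theta\cdot F>0$. This bounds the horizontal part of $\Theta$, and the $\delta$-lc condition bounds the tangency that can occur at each blown-up point.

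\textbf{Step 3: the two counts (main obstacle).} The hard step is bounding, polynomially in $1/\delta$, two quantities:
\begin{enumerate}
\item the number of reducible fibers of the induced conic-bundle structure on $\widetilde S$;
\item the number of components in each such fiber.
\end{enumerate}
For (ii) I would use that every blowup inside a fiber produces a divisor whose log discrepancy is determined by those of its two parents. Long chains therefore force small discrepancies, unless the horizontal boundary pays for them. For (i) I would bound the number of fibers through which the boundary can pass with bounded intersection degree. The product of the two counts gives the $\delta^{-5}$ order. Rather than re-derive the exact constant, I would cite \cite{AM04} and \cite[Lemma~2.7]{Liu23} for it.
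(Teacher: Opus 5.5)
Your proposal matches the paper, which likewise gives no argument and simply imports the bound from \cite[Corollary~1.10]{AM04}, noting that its application to the minimal resolution $\widetilde S$ rather than to $S$ is recorded in the proof of \cite[Lemma~5.7]{BL26}; your Steps~1--3 are only a heuristic outline (Step~3 is the entire content of the theorem), so in both versions the citation does all the work. One correction to your references: the paper cites \cite[Lemma~2.7]{Liu23} for the derived Cartier-index bound $I\le2(2/\eps)^{128\eps^{-5}}$, whereas the Picard-number bound $\rho(\widetilde S)\le128\delta^{-5}$ itself is \cite[Corollary~1.10]{AM04}, and that is what you should cite.
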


This is \cite[Corollary~1.10]{AM04}; the stated application to the minimal resolution of a log del Pezzo pair is also explicitly recorded in the proof of \cite[Lemma~5.7]{BL26}. In particular, the bounded Picard number is that of $\widetilde S$, not merely that of $S$. No assumption that $\widetilde S$ itself is weak del Pezzo is made.

\begin{corollary}\label{o:cor:uniform-det}
Let $X$ be an $\eps$-lc log del Pezzo surface with $0<\eps\le1$. Every singular point of $X$ satisfies
\begin{equation}\label{o:eq:R}
 D_p\le R(\eps):=4096\eps^{-7}.
\end{equation}
\end{corollary}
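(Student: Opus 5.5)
We must prove $D_p\le 4096\eps^{-7}$ for every singular point of an $\eps$-lc log del Pezzo surface $X$, and we split according to the type of $p$.

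\emph{Canonical points.} Here $D_p$ is the determinant of an ADE lattice. Its value is at most $s_p+1$, attained by $A_{s}$; for $D$ and $E$ types it is $4,3,2,1$. So it suffices to bound $s_p$ linearly. By Lemma~\ref{m:lem:noether}, $s_p+1\le\rho(Y)$.

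\emph{Noncyclic noncanonical points.} Lemma~\ref{m:lem:fork} gives directly $D_p\le4/\eps\le4096\eps^{-7}$.

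\emph{Cyclic points.} Lemma~\ref{m:lem:chain} gives $r=D_p\le(s_p+1)\eps^{-2}$. Again it remains to bound $s_p+1\le\rho(Y)$.

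For $\rho(Y)$ we apply Theorem~\ref{o:thm:AM} with $S=X$ and $\Theta=0$; the minimal resolution is $\widetilde S=Y$, and $-K_X$ is ample. This requires $\delta<1/\sqrt3$, so we take $\delta=\eps/2\le1/2<1/\sqrt3$. An $\eps$-lc pair is also $\eps/2$-lc. We obtain
\[
\rho(Y)\le128(\eps/2)^{-5}=4096\eps^{-5}.
\]
Hence in the cyclic case $D_p\le4096\eps^{-5}\cdot\eps^{-2}=4096\eps^{-7}$. In the canonical case $D_p\le\rho(Y)\le4096\eps^{-5}\le4096\eps^{-7}$, since $\eps\le1$.

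The choice $\delta=\eps/2$ is exactly what makes the constant come out to $4096=128\cdot 2^5$.

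The only delicate point is checking that Theorem~\ref{o:thm:AM} bounds $\rho$ of the minimal resolution, which the paper states explicitly. It is also worth noting that $\eps=1$ itself is fine, because shrinking $\delta$ avoids the boundary $1/\sqrt3$. No step should be a real obstacle; the remaining estimates are routine.
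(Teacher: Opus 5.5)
Your proof is correct and follows the paper's argument. The paper likewise applies Theorem~\ref{o:thm:AM} with $\delta=\eps/2$ to get $\rho(Y)\le4096\eps^{-5}$, then uses Lemma~\ref{m:lem:chain} together with $s_p+1\le\rho(Y)$ at cyclic points and Lemma~\ref{m:lem:fork} at noncyclic points. Your separate ADE treatment of canonical points is harmless but unnecessary, because both lemmas already apply to canonical chains and forks, where all $\alpha_i=1$.
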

\begin{proof}
Set $\delta=\eps/2$. This lies in $(0,1/\sqrt3)$, and $X$ is $\delta$-lc, in fact $\delta$-klt. Theorem~\ref{o:thm:AM} gives
\[
 \rho(Y)\le128(\eps/2)^{-5}=4096\eps^{-5}.
\]
At a cyclic quotient point, Lemma~\ref{m:lem:chain} and Lemma~\ref{m:lem:noether} give
\[
 D_p\le(s_p+1)\eps^{-2}\le\rho(Y)\eps^{-2}\le4096\eps^{-7}.
\]
At a noncyclic quotient point, Lemma~\ref{m:lem:fork} gives $D_p\le4/\eps\le4096\eps^{-7}$.
\end{proof}

\subsection{From a global index to one intersection number}\label{o:sec:curve}
The effective index estimate recorded in \cite[Lemma~2.7]{Liu23},
originating in \cite{AM04}, gives
\begin{equation}\label{eq:old-index-volume}
 IK_X\text{ Cartier},\quad I\le2(2/\eps)^{128\eps^{-5}},
 \qquad V\ge\tfrac12(\eps/2)^{128\eps^{-5}}.
\end{equation}
Only $IV$, rather than $I^2V$, needs to be integral. This explicit bound
is not a fixed power of $\eps$. The following argument replaces the
global Cartier index by the denominators of a single intersection.

\begin{proposition}\label{o:prop:main-mechanism}
Suppose $D_p\le R$ at every noncanonical point, $R\ge2$, and $X$ has
an $n$-complement. Then $V\ge1/(nR^{2n})$.
\end{proposition}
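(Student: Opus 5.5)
We plan to combine Lemma~\ref{lem:complement-grid}, Lemma~\ref{m:lem:budget} and Lemma~\ref{m:lem:J}. Take the complement component $C_X$ with $t=(-K_X)\cdot C_X>0$ and coefficient $b\ge1/n$, and let $T=C_X\cap\Ncan(X)$. By Lemma~\ref{m:lem:J}, $V\ge1/(nJ)$ with $J=\lcm\{D_p:p\in T\}\le\prod_{p\in T}D_p\le R^{\#T}$. It therefore suffices to show that either $\#T\le2n$, or else $V$ is already bounded below by something at least $1/(nR^{2n})$.

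First suppose $C^2>0$ on $Y$. Lemma~\ref{lem:complement-grid} gives $V\ge1/n^2$, and $1/n^2\ge1/(nR^{2n})$ because $R^{2n}\ge n$ when $R\ge2$. So assume $C^2\le0$. The budget of Lemma~\ref{m:lem:budget} reads $\sum_p\lambda_p\le2+n(n-1)V$, with the sum over all singular points of $C_X$. By Lemma~\ref{m:lem:lambda}, each noncanonical point contributes $\lambda_p\ge2/3$, and every other term is nonnegative. Hence
\[
 \tfrac23\#T\le2+n(n-1)V.
\]
Now split on the size of $V$.
\begin{itemize}
\item If $V\ge1/n^2$, we are done as before.
\item If $V<1/n^2$, then $n(n-1)V<1$, so $\#T<9/2$ and $\#T\le4\le2n$ when $n\ge2$.
\end{itemize}
In the second case $J\le R^{4}\le R^{2n}$, which gives the bound for $n\ge2$.

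The case $n=1$ needs separate care, since $4>2n=2$. For $n=1$ the budget is exact: $\sum\lambda_p\le2$, and by \eqref{m:eq:exact-budget} with $b=1$ we have $t=C_X^2$ up to the nonnegative $G_X\cdot C_X$ term. Then $\tfrac23\#T\le2$ gives $\#T\le3$, and we need to exclude $\#T=3$. If $\#T=3$, the three contributions $\lambda_p\ge1-1/D_p$ must sum to at most $2-t<2$. This forces at least two of them to be cyclic with small $D_p$, but these are noncanonical, so $D_p\ge3$. Three cyclic points then give $\sum\lambda_p\ge3(1-1/3)=2$, contradicting $t>0$; a noncyclic point contributes $\ge1$, which only makes the sum larger. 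So $\#T\le2=2n$ and $J\le R^2$. I expect this small-$n$ counting to be the only delicate point. For $n\ge2$ the crude bound $\#T\le4\le2n$ suffices, so the main obstacle is verifying that the weaker estimate is uniform in each case. If the $n=1$ exclusion fails, I would instead use $J\le R^{3}$ together with $t\ge1/J$ and $V\ge bt$, exploiting $t=2-\sum\lambda_p$ directly.
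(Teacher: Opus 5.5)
For $n\ge2$ your argument is correct, and it takes a different route from the paper. The paper does not use the pointed budget here. It writes $B_Y=bC+\Gamma$, and adjunction gives $\Gamma\cdot C=2-2p_a(C)+(1-b)C^2\le2$ when $C^2\le0$. Each $p\in T$ contributes at least $1/n$ to $\Gamma\cdot C$, because $C$ meets some exceptional curve over $p$ whose coefficient in $B_Y$ is positive and lies in $\frac1n\Z$. This gives $\#T\le2n$ for every $n$, with no split on $V$ and no special case. You instead use $\sum_p\lambda_p\le2+n(n-1)V$ and $\lambda_p\ge2/3$, which bounds $\#T\le4$ independently of $n$ once $V<1/n^2$; this is essentially the mechanism of Proposition~\ref{m:prop:R2}. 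That is stronger for $n\ge3$, but it is exactly what breaks at $n=1$, where $4>2n$.

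For $n=1$ the proof has a genuine error. The inequality $\sum_p\lambda_p\le2-t$ does not follow from \eqref{m:eq:exact-budget}. With $b=1$ one has $t=C_X^2+G_X\cdot C_X$, so \eqref{m:eq:exact-budget} gives $\sum_p\lambda_p=2-2p_a(C)-G_X\cdot C_X$. This is at most $2-t$ only when $C_X^2\le2p_a(C)$. The assumption $C^2\le0$ does not ensure this, because $C_X^2=C^2+\sum_p\tau_p$. For instance, take a ruling through the vertex of $\PP(1,1,2)$, in the $1$-complement consisting of two rulings and a general conic. There $C^2=0$ and $t=2$, but $\sum_p\lambda_p=\frac12>2-t$.

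With only $\sum_p\lambda_p\le2$ available, the budget gives no contradiction: three noncanonical cyclic points of determinant $3$ with unit contacts give $\sum_p\lambda_p=2$ exactly. Your fallback $J\le R^3$ yields only $V\ge1/R^3$, which is weaker than the required $1/R^2$. The repair is short. In the saturated case every determinant equals $3$, so $J=\lcm(3,3,3)=3$ and $V\ge1/3\ge1/R^2$. Alternatively, the paper's coefficient count gives $\#T\le2n=2$ directly.
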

\begin{proof}
Choose the curve and coefficient from Lemma~\ref{lem:complement-grid}.
If $C^2>0$, then $V\ge1/n^2\ge1/(nR^{2n})$, since
$R^{2n}\ge4^n\ge n$. Suppose $C^2\le0$ and write
$B_Y=bC+\Gamma$, where $\Gamma\ge0$ does not contain $C$. Adjunction gives
\begin{equation}\label{o:eq:adjunction-budget}
 0\le\Gamma\cdot C
 =2-2p_a(C)+(1-b)C^2\le2.
\end{equation}
No smoothness of $C$ is required: arithmetic-genus adjunction applies to
every integral curve on the smooth surface $Y$, and $p_a(C)\ge0$.

Let $T=C_X\cap\Ncan(X)$. For each $p\in T$, the strict transform $C$
meets an exceptional component $E_p$ over $p$, since the proper map
$C\to C_X$ is surjective. Each such component has positive coefficient
in $\Delta$, hence positive coefficient in $B_Y$, which by integrality
is at least $1/n$. The components chosen at different points are distinct,
and $C\cdot E_p\ge1$. All other intersections with components of $\Gamma$
are nonnegative. Thus
\begin{equation}\label{o:eq:count-on-C}
 |T|/n\le\Gamma\cdot C\le2,\qquad |T|\le2n.
\end{equation}
This counts singular points, independently of the number of branches of
$C_X$ or of exceptional components met over one point.

Put $J=\prod_{p\in T}D_p$, with empty product one. Then
$1\le J\le R^{2n}$. Since canonical discrepancy divisors vanish and
exceptional fibers away from $C_X$ are disjoint from $C$,
\begin{equation}\label{o:eq:single-integrality}
 Jt=-JK_Y\cdot C-\sum_{p\in T}(J\Delta_p)\cdot C\in\Z_{>0}.
\end{equation}
Every divisor in the right-hand intersections is integral on $Y$.
It follows that $V\ge bt\ge1/(nJ)\ge1/(nR^{2n})$.
\end{proof}

\begin{corollary}[The first polynomial bound]\label{cor:84}
Every $\eps$-lc weak del Pezzo surface satisfies
\[
 V\ge\min\{\cExc,\eps^{84}/(6\cdot4096^{12})\}.
\]
\end{corollary}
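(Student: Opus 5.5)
We first reduce to the Fano case and then apply Proposition~\ref{o:prop:main-mechanism} with the uniform determinant bound. By Lemma~\ref{m:lem:weak-model}, the anticanonical model $Z$ of $X$ is an $\eps$-lc klt del Pezzo surface with $V(Z)=V(X)$, so it suffices to treat $Z$. We then split according to Proposition~\ref{prop:complement-input}. If $Z$ is exceptional, then $V(Z)\ge\cExc$ and there is nothing more to prove.

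Otherwise $Z$ has an $n$-complement for some $n\in\{1,2,3,4,6\}$. Corollary~\ref{o:cor:uniform-det}, applied to $Z$ and its minimal resolution, gives $D_p\le R:=4096\eps^{-7}$ at every singular point, and in particular at every noncanonical point. Since $\eps\le1$, we have $R\ge2$. Proposition~\ref{o:prop:main-mechanism} then yields
\[
 V\ge\frac1{nR^{2n}}.
\]

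It remains to make this uniform in $n$. Since $R\ge1$ and $n\le6$, we have $nR^{2n}\le6R^{12}$, so
\[
 V\ge\frac1{6R^{12}}=\frac{\eps^{84}}{6\cdot4096^{12}}.
\]
Taking the minimum with the exceptional case gives the stated bound.

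There is no real obstacle; the only point to check is that the hypotheses transfer to $Z$. Specifically, Corollary~\ref{o:cor:uniform-det} requires $Z$ to be Fano and $\eps$-lc, and Lemma~\ref{m:lem:weak-model} provides both, since it shows $-K_Z$ is ample and that $\eps$-lc singularities are preserved.
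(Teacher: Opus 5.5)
Your proof is correct and is essentially the paper's argument: pass to the crepant anticanonical model via Lemma~\ref{m:lem:weak-model}, use the complement dichotomy of Proposition~\ref{prop:complement-input}, and in the nonexceptional case combine Corollary~\ref{o:cor:uniform-det} with Proposition~\ref{o:prop:main-mechanism}. Your explicit estimate $nR^{2n}\le6R^{12}=6\cdot4096^{12}\eps^{-84}$ is the one arithmetic step the paper leaves implicit.
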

\begin{proof}
For Fano surfaces combine Proposition~\ref{prop:complement-input},
Corollary~\ref{o:cor:uniform-det}, and
Proposition~\ref{o:prop:main-mechanism}. In the weak case use the crepant
anticanonical model of Lemma~\ref{m:lem:weak-model}.
\end{proof}
The exponent $84=7\cdot12$ comes from the seventh-power determinant
estimate and at most twelve relevant points. This was the main result
of the first version of this paper \cite{Bie26v1}. The integer in
\eqref{o:eq:single-integrality} need not make $JK_X$ Cartier globally;
this distinction is what makes a polynomial estimate possible.

\subsection{The pointed budget and exponent fourteen}
The term $\lambda_p$ combines the discrepancy contribution and the
self-intersection correction. Its lower bound from
Lemma~\ref{m:lem:lambda} strengthens the coefficient count
\eqref{o:eq:count-on-C} as follows.
\begin{proposition}[A quadratic denominator bound]\label{m:prop:R2}
Suppose $X$ is a weak del Pezzo surface with an $n$-complement, $1\le n\le6$, and $D_p\le R_0$ at every noncanonical point, where $R_0\ge6$. Then
\[
 V(X)\ge\frac1{nR_0^2}.
\]
\end{proposition}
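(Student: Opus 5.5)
The plan is to reuse the complement curve and the exact budget of Lemma~\ref{m:lem:budget}. Where Proposition~\ref{o:prop:main-mechanism} used the crude count \eqref{o:eq:count-on-C}, I will instead use the pointed lower bounds of Lemma~\ref{m:lem:lambda}. This cuts the number of noncanonical points on $C_X$ whose determinants can be large down to two. Then Lemma~\ref{m:lem:J}, in its $\lcm$ form, gives the result.

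\textbf{Setup and easy cases.} Choose $C_X$, $b$, $t$ as in Lemma~\ref{lem:complement-grid}. If $C^2>0$, then $V\ge 1/n^2\ge 1/(nR_0^2)$. So I may argue by contradiction and assume $V<1/(nR_0^2)$. Then \eqref{m:eq:budget} gives
\[
 \sum_{p\in C_X}\lambda_p\le 2+\frac{n-1}{R_0^2}\le 2+\frac{5}{36},
\]
using $n\le 6$ and $R_0\ge 6$. By Lemma~\ref{m:lem:lambda}, every $\lambda_p$ is nonnegative. Each noncanonical cyclic point contributes at least $1-1/r\ge 2/3$, and each noncyclic point contributes at least $1$. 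Let $T=C_X\cap\Ncan(X)$.

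\textbf{Counting noncanonical points.} If $|T|\ge 4$, the sum is at least $8/3>2+5/36$, which is impossible. If $|T|=3$ and one of the points is noncyclic, the sum is at least $1+4/3>2+5/36$, also impossible. So if $|T|=3$, all three points are cyclic, with determinants $r_1\le r_2\le r_3$, each $r_i\ge 3$, and
\[
 \sum_i(1-1/r_i)\le 2+5/36, \qquad\text{that is,}\qquad \sum_i 1/r_i\ge 31/36.
\]
A short enumeration handles this case. The triple $(3,4,4)$ already gives $5/6<31/36$, so $r_1=r_2=3$. Then $1/r_3\ge 31/36-2/3=7/36$ forces $r_3\le 5$. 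Hence $J=\lcm\{r_i\}\in\{3,12,15\}$, so $J\le 15<36\le R_0^2$. If $|T|\le 2$, then trivially $J\le R_0^2$.

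\textbf{Conclusion.} In every case Lemma~\ref{m:lem:J} gives $V\ge 1/(nJ)\ge 1/(nR_0^2)$. This contradicts the assumption, so $V\ge 1/(nR_0^2)$ always holds. Only the complement and the budget are used; the weak (non-ample) case therefore needs no separate treatment.

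The main obstacle is the three-point case. There the crude bound $\lambda_p\ge 2/3$ alone would allow three large determinants, and one must exploit the exact $1-1/r$ form together with the small slack $5/36$. It is this slack that forces two of the determinants to equal $3$. This step is also where the hypothesis $R_0\ge 6$ enters: it guarantees $15\le R_0^2$, and it keeps the slack below the gap between $5/6$ and $31/36$ that excludes the triple $(3,4,4)$.
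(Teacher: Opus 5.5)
Your proposal is correct and follows the paper's proof: use the budget $\sum_p\lambda_p\le 2+5/36$ from Lemma~\ref{m:lem:budget} and the pointed bounds of Lemma~\ref{m:lem:lambda} to show $\#T\le 3$, force two small determinants when $\#T=3$, and conclude with Lemma~\ref{m:lem:J}. The only differences are minor. Your three-point enumeration is sharper, giving $J\le 15$ where the paper uses $J\le 6R_0\le R_0^2$. Your separate $C^2>0$ case is harmless but not needed, since the budget holds regardless of the sign of $C^2$.
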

\begin{proof}
Suppose instead that $V<1/(nR_0^2)$. By Lemma~\ref{m:lem:budget},
\[
 \sum_p\lambda_p\le2+n(n-1)V
 \le2+\frac{n-1}{R_0^2}\le2+\frac5{36}<\frac{13}{6}.
\]
Each point of $T$ contributes at least $2/3$, so $\#T\le3$. If $\#T=3$ and two determinants are at least four, the contributions are at least $3/4+3/4+2/3=13/6$, a contradiction. Thus either $\#T\le2$, or there are three points and two determinants are at most three. In the former case $J\le R_0^2$. In the latter, the least common multiple of the two small positive integers is at most six, so $J\le6R_0\le R_0^2$. Lemma~\ref{m:lem:J} contradicts the assumed strict upper bound on $V$.
\end{proof}

\begin{corollary}[Exponent fourteen]
Every $\eps$-lc weak del Pezzo surface satisfies
\[V\ge\min\{\cExc,\eps^{14}/(6\cdot4096^2)\}.\]
\end{corollary}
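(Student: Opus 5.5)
The plan mirrors Corollary~\ref{cor:84}, with Proposition~\ref{m:prop:R2} in place of Proposition~\ref{o:prop:main-mechanism}. First reduce to the Fano case. By Lemma~\ref{m:lem:weak-model}, the anticanonical model $Z$ of $X$ is a klt del Pezzo surface with $V(Z)=V(X)$, and it is still $\eps$-lc. So it suffices to prove the bound for $\eps$-lc log del Pezzo surfaces.

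For such a surface, apply Proposition~\ref{prop:complement-input}. If $X$ is exceptional, then $V(X)\ge\cExc$ and we are done. Otherwise $X$ has an $n$-complement for some $n\in\{1,2,3,4,6\}$, so $n\le6$.

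In the nonexceptional case, take $R_0:=R(\eps)=4096\eps^{-7}$ from Corollary~\ref{o:cor:uniform-det}. This bounds $D_p$ at every singular point, in particular at every noncanonical point. Since $0<\eps\le1$, we have $R_0\ge4096\ge6$, so the hypothesis of Proposition~\ref{m:prop:R2} holds. The proposition gives
\[
 V(X)\ge\frac{1}{nR_0^2}\ge\frac{1}{6\cdot4096^2\eps^{-14}}=\frac{\eps^{14}}{6\cdot4096^2}.
\]
Taking the minimum with the exceptional case gives the stated bound.

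No step is expected to be a real obstacle, since every input is already proved. The only points to check are the hypotheses. Corollary~\ref{o:cor:uniform-det} is stated for log del Pezzo surfaces, which is why the passage to $Z$ comes before it is used. Proposition~\ref{m:prop:R2} requires $R_0\ge6$ and $n\le6$, and both hold as noted above.
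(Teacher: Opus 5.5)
Your proposal is correct and is essentially the paper's own argument: pass to the anticanonical model via Lemma~\ref{m:lem:weak-model}, split by the complement dichotomy of Proposition~\ref{prop:complement-input}, and insert $R_0=4096\eps^{-7}$ from Corollary~\ref{o:cor:uniform-det} into Proposition~\ref{m:prop:R2} with $n\le6$. Your check of the hypotheses $R_0\ge6$, $n\le6$, and the Fano assumption needed for the determinant bound is exactly what is required.
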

\begin{proof}
Use $R_0=4096\eps^{-7}$ in Proposition~\ref{m:prop:R2}, and the
complement dichotomy. Pass to the anticanonical model in the weak case.
\end{proof}
The improvement is local: the correction budget replaces a product of
up to twelve determinants by at most a quadratic denominator. It does
not yet improve the global estimate on any one determinant.

\section{Weighted fibers, Picard numbers, and determinant mass}\label{m:sec:rank}

The ruling and discriminant framework below follows
\cite[Section~2, especially (2.14)--(2.15)]{BiLee25}. Their numerical
theorem concerns $\frac16$-lc pairs with $12(K+B)\sim0$; we do not apply
it for variable $\eps$. The reciprocal edge potential supplies the
variable-coefficient estimate needed here. We first prove a statement
for an auxiliary rational log Calabi--Yau pair.

\begin{proposition}\label{m:prop:pair-rank}
Let $S$ be a smooth projective rational surface with an effective $\Q$-divisor $B$ such that $K_S+B\sim_\Q0$ and $(S,B)$ is $\eps$-lc, where $0<\eps<1$. Unless $S=\PP^2$, choose a birational morphism to a Hirzebruch surface and its induced ruling. For that ruling define
\begin{equation}\label{m:eq:potential}
 \Phi(S,B)=\sum_{\substack{\text{nodes }T\cap T'\text{ of}\\\text{reduced fibers}}}
 \frac1{a_Ta_{T'}},\qquad a_T=1-\coeff_T B.
\end{equation}
Then
\begin{equation}\label{m:eq:pair-potential}
 \rho(S)-2\le\Phi(S,B)\le4\eps^{-2}+12\eps^{-3}.
\end{equation}
In particular \eqref{eq:rank-main} holds for $S$.
\end{proposition}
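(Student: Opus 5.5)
The plan is to prove both inequalities in \eqref{m:eq:pair-potential} by building $S$ from a Hirzebruch surface $\mathbb F_m$ one point blowup at a time, and tracking $\Phi$ along the way. If $S=\PP^2$ then $\rho(S)=1$ and there is nothing to prove, so assume $S\ne\PP^2$ and fix the factorization $S=S_N\to\cdots\to S_0=\mathbb F_m$ into point blowups. On each $S_k$ put $B_k$ equal to the pushforward of $B$. Since $K_S+B\sim_\Q0$, the pair $(S_k,B_k)$ is crepant to $(S,B)$, so $B_k\ge0$, $K_{S_k}+B_k\sim_\Q0$, and every divisorial log discrepancy is still at least $\eps$. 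In particular every coefficient satisfies $a_T=1-\coeff_TB_k\in[\eps,1]$.

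\textbf{Lower bound.} Every fiber of $S\to\PP^1$ is obtained from a smooth $\PP^1$-fiber by blowups. Hence its reduced support is an SNC tree of smooth rational curves, and a fiber with $k$ components has exactly $k-1$ nodes. Summing over fibers, the number of nodes is $\rho(S)-\rho(\mathbb F_m)=\rho(S)-2$. Each node contributes $1/(a_Ta_{T'})\ge1$, which gives $\rho(S)-2\le\Phi$.

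\textbf{Upper bound, local step.} On $\mathbb F_m$ all fibers are irreducible, so $\Phi(S_0,B_0)=0$. Consider one blowup of a point $p$ on a fiber, with exceptional curve $E$ and $c=a_E$. Write $h$ for the multiplicity at $p$ of the part of $B_k$ that does not contain the fiber components through $p$; I call this the horizontal multiplicity spent at $p$. There are two cases.
\begin{enumerate}
\item $p$ is a node $T\cap T'$ with $a=a_T$, $b=a_{T'}$. Crepancy gives $1-c=(1-a)+(1-b)+h-1$, so $c=a+b-h$. The edge $TT'$ is replaced by $TE$ and $ET'$, so the change in $\Phi$ is
\[
\frac1{ac}+\frac1{bc}-\frac1{ab}=\frac{a+b-c}{abc}=\frac h{abc}\le h\eps^{-3}.
\]
\item $p$ is a smooth point of the fiber support, on a component $T$. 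Then $c=1+a_T-1-h+1$; more precisely $1-c=(1-a_T)+h-1$. Effectivity of $B_{k+1}$ forces $h\ge a_T\ge\eps$, and the new edge contributes $1/(a_Tc)\le\eps^{-2}\le h\eps^{-3}$.
\end{enumerate}
Blowups of points not lying on any previously blown-up fiber locus belong to the same two cases. In both cases the increase of $\Phi$ is at most $\eps^{-3}$ times the horizontal multiplicity spent. I would also keep the sharper bound $\eps^{-2}$ for the steps whose $h$ is carried by the vertical part. This is where the split into a $4\eps^{-2}$ term and a $12\eps^{-3}$ term should come from.

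\textbf{Global budget (main obstacle).} It remains to bound the total horizontal multiplicity $\sum h$ spent over all blowups. On a general fiber $F$ one has $B\cdot F=-K_S\cdot F=2$, so the horizontal part of $B$ has degree $2$ over the base. Along a chain of infinitely near points inside one fiber, the multiplicities of the strict transform of the horizontal part are controlled by its local intersection with that fiber. The naive count therefore bounds the spent multiplicity by $2$ per fiber. That is not enough, because the number of special fibers is a priori unbounded.

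The fix I would try first is the canonical bundle formula for $S\to\PP^1$, following \cite[(2.14)--(2.15)]{BiLee25}. Write $K_S+B\sim_\Q\pi^*(K_{\PP^1}+B_{\PP^1}+M)$. Here $\deg(K_{\PP^1}+B_{\PP^1}+M)=0$, $M$ is nef, and $\deg K_{\PP^1}=-2$, so the discriminant satisfies $\deg B_{\PP^1}\le2$. The goal is then a per-fiber inequality: the horizontal multiplicity spent in a fiber is at most a fixed multiple of (the discriminant coefficient of that fiber) plus (the vertical contribution, weighted by $\eps^{-2}$). Both quantities are read off from the lc threshold of $\pi^*$ of a point with respect to $(S,B)$. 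Since vertical coefficients are at most $1-\eps$, I expect the constant $6$ in front of the discriminant to come from the worst case of two horizontal branches through one point. Summing over fibers against $\deg B_{\PP^1}\le2$ would then give $\Phi\le4\eps^{-2}+12\eps^{-3}$.

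Finally, $\eps^{-2}\le\eps^{-3}$ gives $\rho(S)\le2+4\eps^{-2}+12\eps^{-3}\le18\eps^{-3}$, which is \eqref{eq:rank-main} for $S$.
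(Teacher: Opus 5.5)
Your setup, the lower bound $\rho(S)-2\le\Phi$, and the node-blowup identity $\frac1{ac}+\frac1{bc}-\frac1{ab}=\frac h{abc}$ agree with the paper. The upper bound, however, has a genuine gap exactly at the step you call the main obstacle.

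Your claim that the spent multiplicity is at most $2$ per fiber is false for smooth-point (type I) blowups. It holds only for node blowups: for the reduced fiber $R$, the number $H\cdot R$ is unchanged by a type I blowup and drops by $h$ under a node blowup. Here is a counterexample. Let a horizontal component and a fiber, both of coefficient $1-\eps$, cross transversally (this happens on $\PP^1\times\PP^1$). Repeatedly blow up the point where the horizontal strict transform meets the newest exceptional curve. This gives about $1/\eps$ type I blowups in that single fiber, with new discrepancies $2\eps,3\eps,\ldots$ and each $h=1-\eps$. Since $\beta_x\le1$, no inequality bounding the spending in one fiber by a fixed multiple of $\beta_x$ can hold.

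Type I spending also occurs over points with $\beta_x=0$ and no vertical boundary at all. Blow up a point where two horizontal components of coefficient $\tfrac12$ and a fiber outside $\Supp B$ meet pairwise transversally. Then $h=1$ and $c=1$, and the boundary $B+\pi^*x$ stays lc over $x$. So your per-fiber inequality, which is never actually formulated, cannot control $\sum h$ when summed against $\deg B_{\PP^1}\le2$.

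The chain example also shows that your type I charge $1/(a_Tc)\le\eps^{-2}\le h\eps^{-3}$ is too weak. The total spent multiplicity is genuinely of order $\eps^{-1}$, so charging $\eps^{-3}$ per unit gives only an $\eps^{-4}$ bound.

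The paper supplies the two missing ingredients as follows.
\begin{itemize}
\item \textbf{Sharper type I increment.} For a type I step, $h=1+a-c$. The function $c\mapsto ac(1+a-c)$ is concave on $[\eps,1]$ with endpoint values $a^2$ and $a\eps(1+a-\eps)$, both at least $\eps^2$. Hence $1/(ac)\le h\eps^{-2}$.
\item \textbf{Global bound on $\sum h$.} The total horizontal multiplicity is bounded globally, not fiberwise. From $K'=\mu^*K+E$ and $H'=\mu^*H-hE$ one gets $(-K')\cdot H'=(-K)\cdot H-h$. On $\mathbb F_n$ one has $(-K)\cdot H_0=8-2v\le8$. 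On $S$, the divisor $H+qF$ with $q=2(1-\eps)/\eps$ is nef: adjunction on a horizontal component $C$ with $C^2<0$ and coefficient $b\le1-\eps$ forces $p_a(C)=0$ and $(1-b)C^2\ge-2$. Hence $(-K_S)\cdot H=B\cdot H\ge-2q$, and telescoping gives $\sum h\le4+4/\eps$.
\item \textbf{Discriminant only for node blowups.} A node blowup creates a component of fiber multiplicity at least $2$, which forces $\beta_x\ge\tfrac12$. So at most four fibers contain node blowups. By the $H\cdot R$ argument, each such fiber spends at most $2$ in node blowups.
\end{itemize}
Altogether $\Phi\le\eps^{-2}(4+4/\eps)+8\eps^{-3}$. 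The $12\eps^{-3}$ term is $4\eps^{-3}$ from type I plus $8\eps^{-3}$ from node blowups, not a constant $6$ times $\deg B_{\PP^1}\le2$.
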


\subsection{The blowup sequence and its coefficients}

If $S\ne\PP^2$, the classification of smooth rational minimal surfaces and point-blowup factorization give
\[
 S=S_r\xrightarrow{\mu_{r-1}}S_{r-1}\longrightarrow\cdots
 \longrightarrow S_0=\mathbb F_n,\qquad r=\rho(S)-2.
\]
If a minimal-model sequence ends at $\PP^2$, stop at its penultimate surface, which is $\mathbb F_1$. These standard facts are in \cite[Chapter~V, Sections~5--6]{Hart77}.

Write $B_i$ for the pushforward of $B$ to $S_i$. Each $B_i$ is effective and $K_{S_i}+B_i\sim_\Q0$. Moreover,
\begin{equation}\label{m:eq:crepant-sequence}
 K_{S_{i+1}}+B_{i+1}=\mu_i^*(K_{S_i}+B_i).
\end{equation}
Indeed, the difference is supported on the one exceptional curve and is numerically trivial; its intersection with that curve forces its coefficient to vanish. Thus every intermediate pair is $\eps$-lc.

Let $\pi_i:S_i\to\PP^1$ be the ruling, and decompose $B_i=H_i+W_i$ into horizontal and vertical parts. Every reduced fiber is an SNC tree of smooth rational curves. This is initially true on $\mathbb F_n$ and is preserved by both kinds of point blowup. For every vertical component, including one outside the support of $B_i$, put $a_T=1-\coeff_TB_i$. Effectivity and $\eps$-lc give
\begin{equation}\label{m:eq:a-range}
 \eps\le a_T\le1.
\end{equation}
The coefficient, and therefore $a_T$, of a strict transform is unchanged.

At the center $z_i$ put $h_i=\operatorname{mult}_{z_i}H_i$. There are two cases. A type I blowup is at a smooth point of a reduced fiber, on a component of discrepancy $a$. A type II blowup is at a node of components of discrepancies $a,b$. If $c$ is the new discrepancy, the formula for the canonical divisor of a blowup and \eqref{m:eq:crepant-sequence} give
\begin{equation}\label{m:eq:two-types}
 \text{type I: }c=1+a-h_i;\qquad
 \text{type II: }c=a+b-h_i.
\end{equation}
The horizontal divisor is always replaced by its strict transform. No SNC assumption on $H_i$ at the center is required for these multiplicity formulas.

\subsection{The total horizontal multiplicity}

We claim
\begin{equation}\label{m:eq:horizontal-budget-new}
 \sum_{i=0}^{r-1}h_i\le4+4/\eps.
\end{equation}
Write $F$ for the class of a fiber on any model. Intersecting $K+B\sim_\Q0$ with $F$ gives $H\cdot F=2$. On $S_0$, write $W_0\equiv vF$ with $v\ge0$. Since $K_{S_0}^2=8$,
\begin{equation}\label{m:eq:initial-H}
 (-K_{S_0})\cdot H_0=8-2v\le8.
\end{equation}

On the final surface put $q=2(1-\eps)/\eps$. We verify that $H_r+qF$ is nef. An integral curve outside $\Supp H_r$ has nonnegative intersection with $H_r$ and $F$. The same holds for a component $C$ of $H_r$ with $C^2\ge0$. For a component $C$ with $C^2<0$, write its coefficient as $b\le1-\eps$. Adjunction gives
\[
 2\pa(C)-2=(1-b)C^2-(B-bC)\cdot C.
\]
The right side is negative; hence $\pa(C)=0$ and $(1-b)C^2\ge-2$. Thus
\[
 H_r\cdot C\ge bC^2\ge-\frac{2b}{1-b}\ge-q.
\]
Because $C$ is horizontal, $F\cdot C\ge1$, proving nefness. Consequently
\begin{equation}\label{m:eq:final-H}
 (-K_S)\cdot H_r=B\cdot H_r
 \ge-qB\cdot F=-2q=-4(1-\eps)/\eps.
\end{equation}
For a blowup with exceptional curve $E$, use $K'=\mu^*K+E$ and $H'=\mu^*H-hE$ to obtain
\[
 (-K')\cdot H'=(-K)\cdot H-h.
\]
Telescoping, and then applying \eqref{m:eq:initial-H} and \eqref{m:eq:final-H}, gives \eqref{m:eq:horizontal-budget-new}.

\subsection{The discriminant budget and node blowups}

The morphism $\pi:(S,B)\to\PP^1$ is klt-trivial in the sense of \cite[Definition~3.1]{FG14}: it has connected fibers, its generic pair is klt, and $K_S+B\sim_\Q\pi^*0$. The rank condition is one because the generic fiber is $\PP^1$ with effective boundary of coefficients less than one; rounding the discrepancy divisor there gives zero and $h^0(\PP^1,\mathcal O)=1$.

The canonical bundle formula and b-nefness \cite[Theorem~2.7]{Amb04}, \cite[Theorem~3.5]{FG14} give
\[
 K_S+B\sim_\Q\pi^*(K_{\PP^1}+\mathcal D+M),\qquad \deg M\ge0.
\]
The discriminant is $\mathcal D=\sum_x\beta_x[x]$, where
\[
 \beta_x=1-t_x,\qquad
 t_x=\sup\{t:(S,B+t\pi^*x)\text{ is lc over }x\}.
\]
If $T$ has multiplicity $m_T$ in $\pi^*x$, its valuation gives $t_x\le a_T/m_T\le1$. Thus $\beta_x\ge0$, and taking degrees on the base gives
\begin{equation}\label{m:eq:discriminant-budget-new}
 \sum_x\beta_x\le2.
\end{equation}
Here b-nefness gives nefness on this base itself, since a smooth complete curve has no nontrivial normal birational model.

If a fiber undergoes any type II blowup, the new component has fiber multiplicity equal to the sum of the two old positive multiplicities, hence at least two. Its strict transform persists on $S$ with the same multiplicity and discrepancy at most one. Therefore $t_x\le1/2$ and $\beta_x\ge1/2$. By \eqref{m:eq:discriminant-budget-new}, at most four fibers undergo a type II blowup. This statement does not bound the total number of reducible fibers.

For a fixed fiber let $R_i$ be its reduced divisor. In type I one has $R_{i+1}=\mu_i^*R_i$, while in type II one has $R_{i+1}=\mu_i^*R_i-E$. Direct intersection with $H_{i+1}=\mu_i^*H_i-h_iE$ gives
\[
 H_{i+1}\cdot R_{i+1}=
 \begin{cases}
 H_i\cdot R_i,&\text{type I},\\
 H_i\cdot R_i-h_i,&\text{type II}.
 \end{cases}
\]
Initially this intersection is two; finally it is nonnegative because the two effective divisors have no common component. Hence the sum of type II multiplicities is at most two in each fiber. We obtain
\begin{equation}\label{m:eq:II-budget-new}
 \sum_{\text{type II}}h_i\le8.
\end{equation}

\subsection{The reciprocal edge potential}

Initially there are no fiber nodes, so $\Phi_0=0$. A type I blowup adds an edge and increases $\Phi$ by $1/(ac)$. Since $h=1+a-c$, we have
\begin{equation}\label{m:eq:I-increment-new}
 \frac1{ac}\le\frac h{\eps^2}.
\end{equation}
Here is a direct verification without a hidden lower bound on $h$. For fixed $a\in[\eps,1]$, the function $c\mapsto ac(1+a-c)$ is concave on $[\eps,1]$. At $c=1$ it equals $a^2\ge\eps^2$; at $c=\eps$ it equals $a\eps(1+a-\eps)\ge\eps^2$. It is therefore at least $\eps^2$ throughout that interval.

A type II blowup replaces one edge of weight $1/(ab)$ by two edges. Its exact increment is
\begin{equation}\label{m:eq:II-increment-new}
 \frac1{ac}+\frac1{bc}-\frac1{ab}
 =\frac{a+b-c}{abc}=\frac h{abc}\le\frac h{\eps^3}.
\end{equation}
In particular a node blowup with $h=0$ leaves the potential unchanged. This is why repeated crepant subdivisions must not be charged the same full cost independently.

Each point blowup increases the number of reduced-fiber edges by exactly one. Thus there are $r$ edges on the final surface. Every edge weight is at least one by \eqref{m:eq:a-range}. Summing \eqref{m:eq:I-increment-new} and \eqref{m:eq:II-increment-new} gives
\[
 r\le\Phi_r\le\eps^{-2}(4+4/\eps)+8\eps^{-3}
 =4\eps^{-2}+12\eps^{-3}.
\]
This proves Proposition~\ref{m:prop:pair-rank}, including every component with self-intersection $-2$ and every blowup with zero horizontal multiplicity.

\subsection{Application to the minimal resolution of a weak del Pezzo surface}\label{m:subsec:auxiliary}

Suppose $\eps<1$ and $Y\ne\PP^2$. Choose the birational morphism $Y\to\mathbb F_n$ first. The divisor $P=-f^*K_X$ is nef and big. For a fiber $F$, one has $P\cdot F>0$: otherwise the Hodge index theorem would force the nonzero class $F$ with $F^2=0$ to have negative square in $P^\perp$.

By the base-point-free theorem, choose a sufficiently divisible integer $m$ with $1/m\le1-\eps$ and a general member $G\in|-mK_X|$. It can be chosen to avoid the singular points of $X$ and the finitely many points on the anticanonical model corresponding to contracted curves. Bertini gives a smooth irreducible strict transform $G_Y$ on $Y$, disjoint from all exceptional curves of $f$. It is horizontal because $G_Y\cdot F=mP\cdot F>0$. Set
\begin{equation}\label{m:eq:auxiliary-B}
 B_Y=\Delta+\frac1mG_Y.
\end{equation}
Then $K_Y+B_Y\sim_\Q0$ and the pair is $\eps$-lc. Along $\Delta$ it is crepant to $(X,0)$; away from $\Delta$ its only boundary is a smooth divisor of coefficient at most $1-\eps$.

The important compatibility is
\begin{equation}\label{m:eq:vertical-compatibility}
 a_T=\begin{cases}
 \alpha_T,&T\text{ is vertical and }f\text{-exceptional},\\
 1,&T\text{ is vertical and not }f\text{-exceptional}.
 \end{cases}
\end{equation}
The member $G_Y$ is horizontal and disjoint from the exceptional divisor. Its coefficient has no role in any effective denominator estimate. Proposition~\ref{m:prop:pair-rank} proves \eqref{eq:rank-main} and supplies the same potential bound on $Y$.

If $Y=\PP^2$, the rank assertion is immediate and there are no exceptional curves. If $\eps=1$, every singularity of $X$ is canonical, $\Delta=0$, and the resolution identity \eqref{m:eq:noether} gives $\rho(Y)=10-V(X)<10$. This also proves \eqref{eq:rank-main}; no auxiliary boundary with coefficient zero is required.

\subsection{The total cyclic determinant}\label{m:sec:mass}

The two graphs in question need not coincide: a singularity chain can contain horizontal exceptional curves. The following comparison deals with that discrepancy explicitly.

Assume $0<\eps<1$ and $Y\ne\PP^2$, use the ruling and boundary from Section~\ref{m:subsec:auxiliary}, and write $\Phi=\Phi(Y,B_Y)$. Let $N$ count all $f$-exceptional curves, including canonical ones. Then
\begin{equation}\label{m:eq:N-Phi}
 N=\rho(Y)-\rho(X)\le\rho(Y)-1\le1+\Phi.
\end{equation}
The horizontal exceptional curves satisfy the stronger geometric budget
\begin{equation}\label{m:eq:horizontal-exc-budget}
 \sum_{E\text{ horizontal exceptional}}(1-\alpha_E)
 \le\Delta\cdot F=2-P\cdot F<2.
\end{equation}
Here every horizontal curve has integral intersection at least one with $F$.

\begin{lemma}[A neighbor of a low-discrepancy curve]\label{m:lem:neighbor-new}
If two exceptional components in a minimal resolution graph are adjacent, with discrepancies $a,b$, then
\begin{equation}\label{m:eq:neighbor-new}
 2a\le1+b.
\end{equation}
\end{lemma}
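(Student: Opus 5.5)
Let the component with discrepancy $a$ be $E$, with $E^2=-b_E\le-2$, valency $v$, and neighbors with discrepancies $\alpha_j$, one of which is $b$. The plan is to read \eqref{m:eq:valency} at $E$,
\[
 b_Ea=\sum_{j\sim E}\alpha_j+2-v,
\]
and turn it into $2a\le 1+b$.

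First isolate the neighbor of discrepancy $b$ and bound the rest. Every log discrepancy on the minimal resolution lies in $(0,1]$ by Lemma~\ref{m:lem:positive}. The remaining $v-1$ neighbors therefore contribute at most $v-1$. This gives $b_Ea\le b+(v-1)+2-v=1+b$.

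Next use $b_E\ge2$ and $a>0$ to get $2a\le b_Ea\le1+b$. That is \eqref{m:eq:neighbor-new}.

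The inequality is symmetric in the roles of the two adjacent components, so nothing needs to be said about orientation. The only point to check is that \eqref{m:eq:valency} holds at every vertex of the graph: chain ends, interior chain vertices, the fork center and arm vertices. It does, since it is just adjunction on $E\simeq\PP^1$ combined with \eqref{m:eq:res}, which was derived for an arbitrary graph.

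The only point that could look like an obstacle is the bound $\alpha_j\le1$ on the other neighbors. This is exactly the nonnegativity of the coefficients of $\Delta$ from Lemma~\ref{m:lem:positive}, which in turn rests on minimality through $b_{p,i}\ge2$. No $\eps$-lc hypothesis is needed. The argument also covers canonical points, where all discrepancies equal one and the inequality reads $2\le2$.
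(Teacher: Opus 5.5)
Your proof is correct and matches the paper's argument: apply \eqref{m:eq:valency} at the component of discrepancy $a$, bound the other $v-1$ neighbors by one to get $b_Ea\le 1+b$, and conclude from $b_E\ge 2$ and $a>0$. As you note, the only inputs are minimality (through $b_E\ge2$ and $\alpha_j\le1$) and the klt positivity of $a$, not the quantitative $\eps$-bound.
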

\begin{proof}
If the first component has self-intersection $-d$ and valency $v$, the discrepancy equation \eqref{m:eq:valency} and the upper bound one for all other neighbors give
\[
 da=b+\sum_{v-1\text{ other neighbors}}\alpha_j+2-v\le b+1.
\]
Since $d\ge2$, the result follows. The use of the minimal resolution here is essential.
\end{proof}

Let $I$ be the sum of $1/(\alpha_i\alpha_j)$ over the internal edges of all noncanonical cyclic chains. Partition these edges as follows.
\begin{enumerate}[label=(\roman*)]
\item Edges with both endpoints vertical are reduced-fiber edges with exactly the same weight, by \eqref{m:eq:vertical-compatibility}. Their total is at most $\Phi$.
\item An edge having a horizontal endpoint and both discrepancies at least $1/2$ has weight at most four. The total number of chain edges is at most $N$, so this class contributes at most $4N$.
\item Consider an edge having a horizontal endpoint and at least one discrepancy less than $1/2$. Some horizontal endpoint has discrepancy less than $3/4$: this is immediate if it is itself the low endpoint, and otherwise follows from Lemma~\ref{m:lem:neighbor-new}. By \eqref{m:eq:horizontal-exc-budget}, there are fewer than eight horizontal exceptional curves with discrepancy less than $3/4$. A vertex of a cyclic chain has degree at most two. Thus this class contains fewer than sixteen edges, each of weight at most $\eps^{-2}$.
\end{enumerate}
Consequently
\begin{equation}\label{m:eq:internal-mass-new}
 I\le\Phi+4N+16\eps^{-2}.
\end{equation}
No bound on the total number of horizontal exceptional curves has been asserted; only those needed for the third class have been counted.

\begin{proposition}[Cubic determinant mass]\label{m:prop:mass-new}
For every complex $\eps$-lc weak del Pezzo surface,
\[
 M_\cyc(X)\le344\eps^{-3}.
\]
Every noncanonical determinant is therefore at most $R_0=344\eps^{-3}$.
\end{proposition}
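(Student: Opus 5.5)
Use the Wronskian identity \eqref{m:eq:wronskian} of Lemma~\ref{m:lem:chain} for every noncanonical cyclic point $p$: with $\alpha_0=\alpha_{s+1}=1$,
\[
 D_p=\sum_{i=0}^{s_p}\frac{1}{\alpha_i\alpha_{i+1}}
 =\frac1{\alpha_{1}}+\frac1{\alpha_{s_p}}+\sum_{\text{internal edges}}\frac1{\alpha_i\alpha_{i+1}}
\]
(for $s_p=1$ the two end terms are both $1/\alpha_1$). Summing over $p$ gives $M_\cyc(X)=I+\sum_p(1/\alpha_{p,1}+1/\alpha_{p,s_p})$, so it suffices to bound the internal sum $I$ and the two end terms at each point.

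First reduce to the case $0<\eps<1$, $Y\ne\PP^2$ and $X$ weak del Pezzo with the auxiliary boundary of Section~\ref{m:subsec:auxiliary}. If $\eps=1$ there are no noncanonical points, and if $Y=\PP^2$ there are no exceptional curves, so in both cases $M_\cyc=0$. For the internal edges invoke \eqref{m:eq:internal-mass-new}, $I\le\Phi+4N+16\eps^{-2}$, together with Proposition~\ref{m:prop:pair-rank} for $\Phi\le4\eps^{-2}+12\eps^{-3}$ and \eqref{m:eq:N-Phi} for $N\le1+\Phi$.

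For the end terms, each is at most $\eps^{-1}$, and each noncanonical cyclic point has at least one exceptional curve, so the end contribution is at most $2\eps^{-1}\cdot\#\{p\}\le2\eps^{-1}N$. This is the crude step and the one I expect to cost the most. A bound of the form $\eps^{-1}N$ is only $O(\eps^{-4})$ if $N$ is merely known to be $O(\eps^{-3})$. So I would instead bound the number of noncanonical cyclic points separately, using $\eps^{-1}\le\eps^{-3}$ only when the count is $O(\eps^{-2})$.

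To count the points, I would try the following. Each noncanonical point contributes $\delta_p>0$, but this alone gives no integrality control. Better: every noncanonical cyclic chain either contains a horizontal curve or is entirely vertical. By \eqref{m:eq:horizontal-exc-budget}, fewer than $2/\eps$ chains contain a horizontal curve, since each horizontal exceptional curve has $1-\alpha\ge$ something positive. A curve with $1-\alpha$ tiny is still allowed, though, so I would rather argue through the endpoints. A vertical endpoint $E$ of a chain lies in a reduced fiber and meets at least one other fiber component, because fibers on $\mathbb F_n$ blown up are connected and $E$ is not a whole fiber (its square is at most $-2$). That fiber edge has weight at least $1/\alpha_E$, since the other $a_T\le1$. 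Distinct points have disjoint chains, so each vertical endpoint can be charged to a fiber edge incident to it, each fiber edge being charged at most twice. This gives a total end contribution of at most $2\Phi$ plus the horizontal endpoints, which number fewer than $2\cdot 8$ plus those with $\alpha\ge3/4$; the latter contribute at most $4/3$ each and number at most $N$.

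Collecting, $M_\cyc\le 3\Phi+6N+O(\eps^{-2})$. Substituting $\Phi\le16\eps^{-3}$ and $N\le1+\Phi$ yields $C\eps^{-3}$, and the constants should then be tuned to reach $344$. The determinant bound $R_0$ follows because each individual $D_p$ is at most $M_\cyc$ in the cyclic case, and at most $4/\eps$ by Lemma~\ref{m:lem:fork} in the noncyclic case.
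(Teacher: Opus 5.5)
Your argument is correct. It uses the same ingredients as the paper: the Wronskian identity \eqref{m:eq:wronskian}, the internal-edge bound \eqref{m:eq:internal-mass-new} with \eqref{m:eq:N-Phi}, charging a vertical exceptional curve to an incident fiber edge via $a_T\le1$, and the horizontal budget \eqref{m:eq:horizontal-exc-budget}. What differs is how you distribute the endpoint terms.

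The paper handles the endpoint terms of chains of length at least two inside the chain itself: $1/\alpha_{\mathrm{end}}$ is at most the weight of the adjacent internal edge, so these chains cost at most $3I$. Only isolated curves are charged to fiber edges (total $2\Phi$) or to the horizontal budget (total $12\eps^{-1}$).

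You instead charge every vertical endpoint to an incident fiber edge, for a total of at most $2\Phi$. You charge every horizontal endpoint to the horizontal budget: fewer than $16$ terms of size at most $\eps^{-1}$, plus at most $N$ terms of size at most $4/3$. So you count $I$ only once and get
\[
 M_\cyc\le 3\Phi+\tfrac{16}{3}N+16\eps^{-2}+16\eps^{-1}.
\]
With $N\le1+\Phi$ and $\Phi\le16\eps^{-3}$, this is below $182\eps^{-3}$, a better constant than $344$. No ``tuning'' is needed; just carry out this substitution explicitly. The paper's route is slightly simpler, because it never looks at where an endpoint of a long chain sits relative to the ruling. Yours pays a small case analysis for the sharper constant.

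Three points in your write-up should be tightened.
\begin{enumerate}
\item The claim that each fiber edge is charged at most twice needs a further remark. The fiber neighbour $T$ of an isolated vertical exceptional curve is not $f$-exceptional, since adjacent exceptional curves lie over the same point. So the two end terms $2/\alpha$ of an isolated curve are the only charges on that edge. Otherwise two adjacent exceptional curves are in the same chain, and both are endpoints only for a chain of length two, each contributing one term.
\item The count ``at most $N$'' for horizontal endpoints with $\alpha\ge3/4$ holds for a reason you should state. An isolated noncanonical curve has $\alpha=2/b\le2/3$, so every such endpoint lies on a chain of length at least two and contributes exactly one term.
\item The exploratory material about a $2\eps^{-1}N$ bound, and about fewer than $2/\eps$ chains meeting horizontal curves, is not used. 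It should be deleted.
\end{enumerate}
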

\begin{proof}
For a cyclic chain of length at least two, the identity \eqref{m:eq:wronskian} says
\[
 r=\frac1{\alpha_{\mathrm{left}}}
 +\sum_{\text{internal edges}}\frac1{\alpha_i\alpha_j}
 +\frac1{\alpha_{\mathrm{right}}}.
\]
Each endpoint term is at most the weight of its adjacent internal edge, since the other discrepancy is at most one. Each endpoint is charged separately, even when the chain has only one internal edge. Thus the sum of the determinants of all chains of length at least two is at most $3I$.

An isolated exceptional curve has $r=b$ and $\alpha=2/b$, hence $r=2/\alpha$. If it is vertical, its fiber is reducible: a sole fiber component in this blowup ruling would have square zero. It therefore meets another vertical component $T$. That neighbor cannot be $f$-exceptional, because intersecting exceptional curves map to the same point, contrary to isolation. By \eqref{m:eq:vertical-compatibility}, $a_T=1$, so this edge has weight $1/\alpha$. Chosen edges for different isolated exceptional curves are distinct; a shared edge would join two such curves. Their determinants therefore sum to at most $2\Phi$.

For an isolated horizontal noncanonical curve, $b\ge3$ and $1-\alpha\ge1/3$. By \eqref{m:eq:horizontal-exc-budget}, there are fewer than six such curves. Each has determinant at most $2/\eps$, so their total is at most $12/\eps$. Combining the three cases gives
\begin{align*}
 M_\cyc
 &\le3I+2\Phi+12\eps^{-1}\\
 &\le5\Phi+12N+48\eps^{-2}+12\eps^{-1}\\
 &\le17\Phi+12+48\eps^{-2}+12\eps^{-1}\\
 &\le204\eps^{-3}+116\eps^{-2}+12\eps^{-1}+12\\
 &\le344\eps^{-3}.
\end{align*}
We used \eqref{m:eq:N-Phi}, \eqref{m:eq:pair-potential}, and $\eps\le1$. The cases $Y=\PP^2$ and $\eps=1$ have $M_\cyc=0$. Finally each noncanonical cyclic determinant is at most this sum, while a noncyclic determinant is at most $4/\eps$ by Lemma~\ref{m:lem:fork}. This is also at most $R_0$.
\end{proof}

\subsection{The more negative exceptional curves}

\begin{proposition}\label{m:prop:negative-localization}
Fix the ruling on $Y$ as above and let $0<\theta<1$. There are fewer than $2/(1-\theta)$ horizontal exceptional curves with $\alpha_E\le\theta$. The vertical exceptional curves with $\alpha_E\le\theta$ lie in at most $\lfloor2/(1-\theta)\rfloor$ fibers. In particular:
\begin{enumerate}[label=(\roman*)]
\item exceptional curves of square at most $-3$ comprise at most five horizontal curves and curves contained in at most six fibers;
\item exceptional curves with $\alpha_E\le1/2$ comprise at most three horizontal curves and curves contained in at most four fibers.
\end{enumerate}
These are bounds on the number of horizontal curves and on the number of fibers, not on the number of vertical curves inside each such fiber.
\end{proposition}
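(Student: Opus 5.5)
The horizontal count comes from the budget \eqref{m:eq:horizontal-exc-budget}. The fiber count comes from the discriminant budget \eqref{m:eq:discriminant-budget-new} for the auxiliary pair $(Y,B_Y)$ of Section~\ref{m:subsec:auxiliary}.

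\emph{Horizontal curves.} Every horizontal exceptional curve $E$ satisfies $E\cdot F\ge1$. Hence
\[
 \sum_{E\text{ horizontal exc.}}(1-\alpha_E)\le\Delta\cdot F=2-P\cdot F<2,
\]
where the strict inequality uses $P\cdot F>0$. If $k$ horizontal curves have $\alpha_E\le\theta$, each contributes at least $1-\theta$. This gives $k(1-\theta)<2$, that is, $k<2/(1-\theta)$.

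\emph{Vertical curves.} Suppose a fiber over $x$ contains an exceptional curve $T$ with $\alpha_T=a_T\le\theta$; here $a_T=\alpha_T$ by \eqref{m:eq:vertical-compatibility}. Let $m_T\ge1$ be its multiplicity in $\pi^*x$. The valuation of $T$ gives
\[
 t_x\le a_T/m_T\le\theta,
\]
so $\beta_x\ge1-\theta$. Summing over the distinct fibers involved, \eqref{m:eq:discriminant-budget-new} gives $\#\{x\}\,(1-\theta)\le2$. The number of such fibers is therefore at most $\lfloor2/(1-\theta)\rfloor$. Note that $(Y,B_Y)$ is $\eps$-lc with $\eps<1$, so the canonical bundle formula setup applies. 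The degenerate cases $Y=\PP^2$ and $\eps=1$ contain no relevant curves: in the second case all singularities are canonical and every $\alpha_E=1>\theta$.

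\emph{Specializations.} For (ii), take $\theta=1/2$. This gives fewer than $4$ horizontal curves, hence at most three, and at most $\lfloor 4\rfloor=4$ fibers.

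For (i), the step to watch is converting square $\le-3$ into a discrepancy bound, since the threshold must land exactly on the counts five and six. Such a curve lies over a noncanonical point, because canonical points have only $(-2)$-curves. Equation~\eqref{m:eq:valency} with $b\ge3$ and neighbours of discrepancy at most one gives $3\alpha\le b\alpha\le v+2-v=2$, so $\alpha_E\le2/3$. Taking $\theta=2/3$ gives fewer than $6$ horizontal curves, hence at most five, and at most $\lfloor6\rfloor=6$ fibers.
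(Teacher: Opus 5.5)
Your proposal is correct and follows the paper's proof essentially step for step. The horizontal count comes from the budget \eqref{m:eq:horizontal-exc-budget}, and the fiber count from $\beta_x\ge1-\alpha_E/m_E\ge1-\theta$ together with \eqref{m:eq:discriminant-budget-new}. The conversion $b_E\alpha_E\le2$ from \eqref{m:eq:valency} then gives $\theta=2/3$ for (i) and $\theta=1/2$ for (ii). Your remarks on the degenerate cases $Y=\PP^2$ and $\eps=1$ are harmless; the paper leaves them implicit in fixing the ruling as in Section~\ref{m:subsec:auxiliary}.
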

\begin{proof}
The horizontal assertion follows from \eqref{m:eq:horizontal-exc-budget}. If a vertical exceptional component has discrepancy at most $\theta$, the discriminant coefficient of its fiber satisfies $\beta_x\ge1-\alpha_E/m_E\ge1-\theta$. Apply \eqref{m:eq:discriminant-budget-new}. Finally \eqref{m:eq:valency} gives $b_E\alpha_E\le2$, so $b_E\ge3$ implies $\alpha_E\le2/3$.
\end{proof}

For Fano $X$, any component $T$ of a reducible fiber which is not $f$-exceptional is a $(-1)$-curve and satisfies
\begin{equation}\label{m:eq:connector}
 0<P\cdot T=2+T^2-\Delta\cdot T,
 \qquad T^2=-1,\qquad \Delta\cdot T<1.
\end{equation}
Indeed $T$ is smooth rational with negative square, and $P\cdot T>0$ by ampleness on $X$. Thus it cannot meet two exceptional curves of discrepancy at most $1/2$: those two intersections alone would contribute at least one to $\Delta\cdot T$. For a weak del Pezzo surface, strict positivity must not be assumed; this assertion is applied after passage to the Fano model when needed.

\section{Small-volume curves and contractions}\label{sec:new}
\subsection{Contracting a negative curve}\label{m:sec:contraction}
\begin{lemma}[Geometric contraction]\label{m:lem:contract}
Let $X$ be an $\eps$-lc Fano surface and let $C_X$ be an integral curve with
$C_X^2=-w<0$ and $t=(-K_X)\cdot C_X$. Then there is a projective birational contraction $g:X\to X'$ whose exceptional locus is exactly $C_X$. The surface $X'$ is again $\eps$-lc and del Pezzo, $\rho(X')=\rho(X)-1$, and
\begin{equation}\label{m:eq:volume-change}
 K_X=g^*K_{X'}+aC_X,\qquad
 a=\frac{t}{w}>0,\qquad
 V(X')=V(X)+\frac{t^2}{w}.
\end{equation}
\end{lemma}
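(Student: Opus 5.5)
The plan is to use the cone and contraction theorem for the klt pair $(X,0)$, or equivalently for a suitable klt pair $(X,\Delta_X)$; this is available since $X$ is $\Q$-factorial klt. Because $-K_X$ is ample, $K_X\cdot C_X=-t<0$, so $C_X$ is $K_X$-negative.

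On a klt del Pezzo surface the Mori cone is rational polyhedral and generated by finitely many $K_X$-negative extremal rays. Each such ray is spanned by a curve class, and every curve of negative self-intersection spans an extremal ray. To see the last point, note that if $C_X^2<0$ and $[C_X]=z_1+z_2$ with $z_i\in\overline{NE}(X)$, then $C_X\cdot z_i<0$ for some $i$. That $z_i$ is then supported on $C_X$, and the usual argument shows both $z_i$ are multiples of $[C_X]$. So $R=\R_{\ge0}[C_X]$ is a $K_X$-negative extremal ray. The contraction theorem \cite[Theorem~3.7]{KM98} gives $g:X\to X'$ with $\rho(X')=\rho(X)-1$, contracting exactly the curves whose classes lie in $R$. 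Any curve with class in $R$ has negative square, so it must equal $C_X$. Hence the contraction is birational (divisorial) with exceptional locus exactly $C_X$, and $X'$ is a normal projective $\Q$-factorial surface.

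Next I would compute the discrepancy. Since $X'$ is $\Q$-factorial, write $K_X=g^*K_{X'}+aC_X$. Intersecting with $C_X$ gives $-t=-aw$, so $a=t/w>0$. Squaring, and using $g^*K_{X'}\cdot C_X=0$, gives
$$V(X)=K_X^2=K_{X'}^2+a^2C_X^2=V(X')-t^2/w,$$
which is the volume identity.

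For singularities, $a>0$ means every divisor $E$ over $X'$ satisfies $a(E,X')\ge a(E,X)$, with the difference equal to $a\cdot\operatorname{mult}_E g^*C_X\ge0$ when the center lies on $C_X$. For $E=C_X$ itself the log discrepancy over $X'$ is $1+a>1$. So $X'$ is $\eps$-lc. For ampleness of $-K_{X'}$ I would use Nakai--Moishezon. First, $K_{X'}^2=V(X')>0$. For a curve $\Gamma'\subset X'$ with strict transform $\Gamma$,
$$-K_{X'}\cdot\Gamma'=-g^*K_{X'}\cdot\Gamma=-K_X\cdot\Gamma+aC_X\cdot\Gamma>0,$$
since $\Gamma\ne C_X$ gives $C_X\cdot\Gamma\ge0$. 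So $-K_{X'}$ is ample.

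The main obstacle is the extremality of $[C_X]$ together with the identification of the exceptional locus as exactly $C_X$, rather than merely some contraction of $C_X$. I would handle it by the negative-square argument above. Alternatively, one can invoke Artin/Grauert contractibility of a negative curve on a rational-singularity surface and then check projectivity via the ample divisor $-K_X+aC_X$-type argument. I expect the cone-theorem route to be the cleaner one.
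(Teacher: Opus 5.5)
Your proposal is correct and follows essentially the paper's proof: the cone and contraction theorem for the klt surface $X$, then $a=t/w$ by intersecting with $C_X$, the volume identity by squaring, $\eps$-lc from effectivity of the pullback of $C_X$ to a model of $X$ (this is what your $\operatorname{mult}_E g^*C_X$ should mean, since $C_X$ lives on $X$), and Nakai--Moishezon for $-K_{X'}$. The only difference is in certifying that $\R_{\ge0}[C_X]$ is an extremal ray whose contraction contracts exactly $C_X$: the paper exhibits the explicit nef and big divisor $L=H_X+\frac{H_X\cdot C_X}{w}C_X$ that vanishes on $C_X$ alone, whereas you use the standard splitting $\overline{\operatorname{NE}}(X)=\R_{\ge0}[C_X]+\overline{\operatorname{NE}}(X)_{C_X\ge0}$ and note that any curve numerically proportional to $C_X$ has negative square and so equals $C_X$; both arguments work.
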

\begin{proof}
We first justify existence and the precise exceptional locus. A negative curve on this Fano surface spans a $K_X$-negative extremal ray. One can see the supporting divisor explicitly: for an ample Cartier divisor $H_X$, set
\[
 L=H_X+\frac{H_X\cdot C_X}{w}C_X.
\]
Then $L\cdot C_X=0$, whereas $L\cdot D\ge H_X\cdot D>0$ for every other integral curve $D$, because $C_X\cdot D\ge0$. Also
\[
 L^2=H_X^2+(H_X\cdot C_X)^2/w>0.
\]
Thus $L$ is nef and big. The base-point-free theorem applies to a Cartier multiple of $L$, since a positive multiple of $L$ minus $K_X$ is nef and big. Its morphism contracts exactly $C_X$. To make the extremal-ray assertion explicit, the cone theorem makes $\overline{\operatorname{NE}}(X)$ rational polyhedral because $-K_X$ is ample. The $L$-null face is therefore generated by curve classes, and the positivity just proved shows that it is precisely $\R_{\ge0}[C_X]$. This is the extremal contraction of \cite[Theorem~3.7]{KM98}; the target is $\Q$-factorial by \cite[Theorem~3.3 and its proof]{Fuj12}. Connected fibers and bigness make it a birational morphism, with relative Picard number one.

Choose compatible canonical divisors. Since $g$ is an isomorphism off $C_X$, write $K_X=g^*K_{X'}+aC_X$. Intersecting with $C_X$ gives $-t=-aw$, proving $a=t/w>0$.

For any divisorial valuation $E$, pass to a common resolution. The discrepancy identity is
\begin{equation}\label{m:eq:discrep-monotone}
 a(E,X',0)=a(E,X,0)+a\,\ord_E(C_X)\ge a(E,X,0)\ge\eps.
\end{equation}
Here the pullback of the effective $\Q$-Cartier divisor $C_X$ is effective: locally a Cartier multiple is cut out by a regular function, which has nonnegative valuation. Formula~\eqref{m:eq:discrep-monotone} also covers $E=C_X$, whose log discrepancy over $X$ is one. Hence $X'$ is $\eps$-lc.

Since $-g^*K_{X'}=-K_X+aC_X$, its intersection with the strict transform of any curve on $X'$ is positive. Squaring \eqref{m:eq:volume-change}'s canonical divisor identity gives $K_X^2=K_{X'}^2-a^2w$, so $K_{X'}^2=V+t^2/w>0$. The Nakai--Moishezon criterion on a projective surface now shows that $-K_{X'}$ is ample; see \cite[Chapter~V, Section~1]{Hart77}, applied to a Cartier multiple. This also proves the stated volume formula.
\end{proof}

\subsection{The enlarged threshold}
\begin{lemma}\label{lem:new-rigidity}
Let $X$ be an $\eps$-lc del Pezzo surface with an $n$-complement,
$1\le n\le6$. Suppose every noncanonical determinant is at most $\mathcal R\ge1$,
and
\begin{equation}\label{eq:new-threshold}
 V<\min\{(1000\mathcal R)^{-1},\eps^2/1000\}.
\end{equation}
Choose a complement component $C_X$ as in
Section~\ref{m:sec:local}. Then its strict transform $C$ is a smooth
rational $(-1)$-curve. It meets exactly two exceptional fibers, both over
noncanonical points, once in each fiber and transversely at a smooth point
of one component. At least one contact is a cyclic endpoint.
\end{lemma}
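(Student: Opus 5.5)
The plan is to work entirely with the exact budget identity \eqref{m:eq:exact-budget} of Lemma~\ref{m:lem:budget} and the integrality statement of Lemma~\ref{m:lem:J}, both applied to the chosen complement curve $C_X$.

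\textbf{Step 1: positive square is excluded.} If $C^2>0$, Lemma~\ref{lem:complement-grid} gives $V\ge 1/n^2\ge1/36$. This contradicts \eqref{eq:new-threshold}, so $C^2\le0$.

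\textbf{Step 2: at least two noncanonical points.} Let $T=C_X\cap\Ncan(X)$. If $\#T\le1$, then $J\le\mathcal R$ and Lemma~\ref{m:lem:J} gives $V\ge1/(6\mathcal R)$, again a contradiction. Hence $\#T\ge2$.

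\textbf{Step 3: at most two singular points.} The threshold gives
\[
 \sum_p\lambda_p\le 2+n(n-1)V<2+30V<2+\eps^2/30 .
\]
Each singular point contributes at least $1/2$ and each noncanonical point at least $2/3$. Two noncanonical points already contribute $\ge 4/3$, so a third singular point, or a third noncanonical point, pushes the total past $2+\eps^2/30$. I will need the sharper cyclic bound $\lambda\ge1-1/r$ with $r\ge3$. With it, three points would give at least $2/3+2/3+1/2>2$ only if I check the case of two noncanonical cyclic points of determinant $3$ together with one $A_1$ point. That configuration gives exactly $2/3+2/3+1/2=11/6<2$, so the crude bound alone does not exclude it.

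To handle this case I will use integrality. With $T$ of size two, $J\le\mathcal R^2$ is too weak. Instead I use the exact budget: $\sum\lambda_p=2-2p_a(C)+C_X^2-t$. Since $C_X^2\le t/b\le nt\le n^2V$ is tiny, this forces $\sum\lambda_p\ge 2-2p_a-O(V)$. As the sum is below $2+\eps^2/30$, the genus must be zero and the local terms must nearly saturate $2$.

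The main task is then a \emph{gap argument}. Write $\sum_p\lambda_p=2+C_X^2-t$, and note that $C_X^2=C^2+\sum\tau_p$ has denominators bounded by the $D_p$. I expect to show that any configuration other than \{two noncanonical points, each a single transversal unit contact, at least one a cyclic endpoint\} has $\sum\lambda_p$ differing from the nearest attainable value of $2+C_X^2-t$ by at least $\min\{\eps/5,\eps/4,1/(6\mathcal R)\}$. That gap would contradict $|C_X^2-t|\le 31V$.

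\textbf{Step 4: the gap cases.} These are the ingredients:
\begin{enumerate}
\item Multiple or nonunit contact $\sum_i m_i\ge2$ at one point: \eqref{m:eq:multiplicity} gives $\lambda_p\ge4/3$, and with the other point's $2/3$ the total reaches $2$. A direct refinement should then give a gap of order $1/r\ge 1/\mathcal R$.
\item Interior contacts: \eqref{m:eq:interior-gap} and Lemma~\ref{lem:fork-gap} give excess $\ge\eps/5$ per point. If both contacts were interior or noncyclic, the sum would be at least $2+\eps/5$, excluded. So at least one contact is a cyclic endpoint.
\item Extra canonical singular points: each adds $\ge1/2$ and is excluded similarly.
\end{enumerate}

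Once only two unit transversal contacts at two points remain, $C$ is smooth rational, because $p_a=0$. Its square is computed from $t=2+C^2-\sum\sigma_p$. Here $\sum\sigma_p<2$, and $\sum\lambda_p$ is close to $2$ while $\tau_p\le1$ each. This should force $C^2=-1$: indeed $C^2\ge0$ was excluded, and $C^2\le-2$ would make $C$ part of a minimal-resolution configuration or give $\sum\sigma_p\le-t<0$.

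I expect the main obstacle to be the precise quantitative gap when one contact is an endpoint of denominator $\sim\mathcal R$. There the defect $-1/r$ can nearly cancel the other point's positive excess. The $1000$ in the threshold must be used to show that $t$ and $C_X^2$ cannot absorb such near-cancellations. I would first try clearing denominators by $D_1D_2$ only in the combination $t$ itself, as in Lemma~\ref{m:lem:J}. The fallback is to bound $\lcm(D_1,D_2)$ through the endpoint structure.
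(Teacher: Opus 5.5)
There is a genuine gap at the center of your plan. You correctly see that the $\lambda$-bounds alone do not exclude extra points. But you then discard integrality for two points because $J\le\mathcal R^2$, and replace it with a ``gap argument'' that is never formulated and cannot work as stated. Several forbidden configurations satisfy $\sum_p\lambda_p\le2$ and are therefore fully compatible with the budget $\sum_p\lambda_p\le2+30V$:
\begin{itemize}
\item three $\frac13(1,1)$ points with unit contacts, giving $\sum_p\lambda_p=2$, so your claim that a third noncanonical point overflows the budget is false;
\item your own $11/6$ configuration;
\item two zero-gap contacts (a cyclic interior vertex with $L=R=2$, or a fork tip beside two single $(-2)$-arms), giving $\sum_p\lambda_p=2$ exactly.
\end{itemize}
Only integrality excludes these. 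The missing idea is that the threshold is \emph{linear} in $\mathcal R$. Lemma~\ref{m:lem:J} therefore suffices as soon as one relevant determinant is bounded by an absolute constant. With exactly two noncanonical points and $D_1\le100$, one has $J\le100\mathcal R$ and $V\ge1/(600\mathcal R)$. With three, the bound $\sum_p\lambda_p<13/6$ forces two of them to be cyclic of determinant $3$, so $J\le3\mathcal R$. Hence both determinants exceed $100$ and each $\lambda_p>99/100$. An extra canonical point ($\lambda\ge1/2$) or a nonunit contact (total $>297/100$ by \eqref{m:eq:multiplicity}) then overflows $2+30V$ outright. The near-cancellation you anticipate at an endpoint of determinant of order $\mathcal R$ never needs analysis, since the lemma's conclusion permits such an endpoint.

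Similarly, in your item (ii), positive gaps are indeed excluded, because $\gamma_1+\gamma_2=C_X^2-t\le5t\le30V<3\eps^2/100<\eps/5$. Two zero gaps, however, are excluded only because both determinants are then at most $4/\eps$, which gives $V\ge\eps^2/96$. This is where the term $\eps^2/1000$ of the threshold is actually used.

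Some supporting steps are also wrong. You infer from $C_X^2\le n^2V$ that $\sum_p\lambda_p\ge2-2p_a-O(V)$, but this reverses an inequality. An upper bound on $C_X^2$ bounds $\sum_p\lambda_p=2-2p_a(C)+C_X^2-t$ only from above. Moreover, $C_X^2$ can be negative of order $1/r$: for two endpoint contacts, $C_X^2=t-1/r_1-1/r_2$. Genus zero and $C^2\in\{-1,0\}$ instead follow directly from $0<t=2-2p_a(C)+C^2-\Delta\cdot C$ with $C^2\le0$ and $\Delta\cdot C\ge0$. Your Step~1 excludes only $C^2>0$, so the remark that ``$C^2\ge0$ was excluded'' is incorrect. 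The case $C^2=0$ must be removed separately, once the two unit contacts are known, via $t=\alpha_1+\alpha_2\ge2\eps>6V$.
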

\begin{proof}
Write $t=(-K_X)\cdot C_X$. The complement budget and denominator lemma give
\begin{equation}\label{eq:new-budget}
 0<t\le6V,\quad C_X^2\le6t,\quad
 \sum_p\lambda_p\le2+30V,\quad V\ge\frac1{6J},
\end{equation}
where $J$ is the least common multiple of the determinants of the
noncanonical points on $C_X$.

Since $2+30V<203/100<13/6$, the proof of Proposition~\ref{m:prop:R2}
shows that there are at most three such points. If there are three,
two determinants are at most three, so $J\le6\mathcal R$ and
$V\ge1/(36\mathcal R)$, a contradiction. With at most one point, $J\le\mathcal R$
and $V\ge1/(6\mathcal R)$, also impossible. There are therefore exactly two.
If either determinant were at most $100$, then $J\le100\mathcal R$ and
$V\ge1/(600\mathcal R)$, again impossible. Thus both determinants exceed $100$.

Each of these points contributes more than $99/100$ to the correction
budget: use $\lambda\ge1-1/D$ for a cyclic point and $\lambda\ge1$
for a fork. An additional canonical point contributes at least $1/2$.
Indeed, a canonical chain has all $\alpha_i=1$ and its smallest diagonal
inverse entry is $(r-1)/r\ge1/2$; a canonical fork has diagonal entries
at least one by Lemma~\ref{m:lem:fork}. Positivity of the inverse extends
these bounds to every nonzero integral contact vector. Such an additional
point would give a sum greater than $62/25$, impossible. Likewise, a
nonunit contact vector at either noncanonical point has coordinate sum at
least two. The multiplicity bound in \eqref{m:eq:multiplicity} (and its
fork analogue) would then make the total greater than $297/100$. Both
contact vectors must be unit vectors.

If $C^2>0$, the positive-self-intersection argument in
Section~\ref{o:sec:global} gives $V\ge1/36$. Otherwise adjunction gives
\[
 0<t=2-2p_a(C)+C^2-\Delta\cdot C.
\]
Since $C^2\le0$ and $\Delta\cdot C\ge0$, this forces $p_a(C)=0$
and $C^2\in\{-1,0\}$. In the zero case the two unit contacts give
$t=\alpha_1+\alpha_2\ge2\eps$, contrary to $t\le6V$.
Hence $C^2=-1$. An integral curve of arithmetic genus zero on a smooth
surface is a smooth rational curve: its normalization has nonnegative
genus and the normalization defect has nonnegative length, whose sum
is the arithmetic genus. Finally, a total intersection number one with
one exceptional component implies a single transverse intersection and
excludes an exceptional node.

For these two unit contacts, adjunction and numerical pullback give
\begin{equation}\label{eq:gamma-global}
 t=\alpha_1+\alpha_2-1,\qquad
 C_X^2=t+\gamma_1+\gamma_2,\qquad
 \gamma_1+\gamma_2\le5t\le30V.
\end{equation}
Suppose neither contact were a cyclic endpoint. Both gaps would be
nonnegative. A positive gap is at least $\eps/5$ by the cyclic estimate
or Lemma~\ref{lem:fork-gap}, contradicting $30V<3\eps^2/100$.
Thus both gaps would be zero. Each corresponding determinant is at most
$4/\eps$, so $V\ge\eps^2/96$ by the denominator lemma. This is the last
contradiction needed.
\end{proof}

\subsection{A determinant formula that permits a change of singularity type}
\begin{lemma}\label{lem:general-merge}
Let $C_X$ be a complement component with $t\le6V$ whose strict transform
is a smooth rational $(-1)$-curve meeting precisely two exceptional fibers,
with unit intersection vectors. Suppose $w=-C_X^2>0$.
Let $D_1,D_2$ be the two old determinants. Contract $C_X$ to obtain $X'$.
If the image is singular, the determinant of its minimal exceptional
matrix is
\begin{equation}\label{eq:general-det}
 D'=D_1D_2w.
\end{equation}
If the image is smooth, the same formula holds with $D'=1$.
Moreover $X'$ is $\eps$-lc del Pezzo and
\begin{equation}\label{eq:general-cost}
 V'=V+t^2/w,\qquad 0<V^{-1}-(V')^{-1}\le36/w.
\end{equation}
\end{lemma}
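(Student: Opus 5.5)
The plan is to compute $D'$ on the resolution $Y$ itself, via a lattice discriminant that does not change when $(-1)$-curves are blown down. The volume statement then follows directly from Lemma~\ref{m:lem:contract}.

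\emph{Step 1: the configuration on $Y$.} Let $C\subset Y$ be the strict transform. It is a smooth rational $(-1)$-curve meeting vertex $i$ of the exceptional graph $\Gamma_1$ over $p_1$ and vertex $j$ of $\Gamma_2$ over $p_2$, each transversely once. Let $g:X\to X'$ be the contraction of Lemma~\ref{m:lem:contract}. The composite $g\circ f:Y\to X'$ contracts exactly $\Gamma_1\cup C\cup\Gamma_2$. This set is connected through $C$, so its image is a single point $p'$. The negated intersection matrix of $\Gamma_1\cup C\cup\Gamma_2$ is the block matrix with $1$ at $C$, blocks $M_1,M_2$, and entries $-1$ linking $C$ to the vertices $i$ and $j$. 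The Schur complement computation of Section~\ref{m:sec:local} gives
\[
 \det=D_1D_2\bigl(1-(M_1^{-1})_{ii}-(M_2^{-1})_{jj}\bigr)=D_1D_2(1-\tau_1-\tau_2).
\]
By \eqref{m:eq:self-correction}, $C_X^2=C^2+\tau_1+\tau_2=-1+\tau_1+\tau_2$. Hence the Schur complement equals $w$, and this determinant is $D_1D_2w$. The same positivity test confirms that $w>0$ agrees with negative definiteness, as it must for a contracted configuration.

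\emph{Step 2: invariance under blowdowns.} The resolution $Y\to X'$ over $p'$ is in general not minimal, since $C$ is a $(-1)$-curve and further $(-1)$-curves may appear after contracting it. Factor $Y\to Y_{\min}$ into successive contractions of $(-1)$-curves lying over $p'$, where $Y_{\min}$ is the minimal resolution of $X'$ near $p'$. If $Y_{\min}\to X'$ is an isomorphism near $p'$, then $Y_{\min}$ has no exceptional curves there and the final configuration is empty.

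Consider one blowup $\mu:\tilde Y\to\bar Y$ with exceptional curve $E$, where $\Lambda\subset N^1(\bar Y)$ is the lattice spanned by the exceptional curves over $p'$. The corresponding lattice on $\tilde Y$ is $\mu^*\Lambda\oplus\Z E$. This is an orthogonal sum because strict transforms differ from pullbacks by integer multiples of $E$. Moreover $\mu^*$ is an isometry and $E^2=-1$. Hence the discriminant, which is the determinant of the negated Gram matrix in a basis of exceptional components, is unchanged.

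Iterating, $D'$ equals the determinant from Step 1. When $p'$ is smooth the lattice is empty, and the formula reads $D'=1=D_1D_2w$. The main care is needed exactly here: one must check that the curves contracted by $Y\to Y_{\min}$ all lie over $p'$, and that the exceptional components form a basis of the lattice at every stage. Both follow from the factorization of birational morphisms of smooth surfaces into point blowups.

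\emph{Step 3: the cost.} Lemma~\ref{m:lem:contract} shows that $X'$ is $\eps$-lc and del Pezzo, with $V'=V+t^2/w$. Then
\[
 V^{-1}-(V')^{-1}=\frac{t^2/w}{VV'}>0
\]
because $t>0$. Using $V'\ge V$ and the hypothesis $t\le 6V$, this difference is at most $t^2/(wV^2)\le36/w$.
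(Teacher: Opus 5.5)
Your proposal is correct and follows the paper's proof essentially step by step. Like the paper, you compute the Schur complement $1-\tau_1-\tau_2=w$ on $Y$, and you show the determinant is unchanged when $(-1)$-curves are blown down; your orthogonal splitting $\mu^*\Lambda\oplus\Z E$ is just the lattice form of the paper's remark that the image matrix is the Schur complement of the entry $1$. You then derive the cost bound from Lemma~\ref{m:lem:contract} using $t\le6V$ and $V'\ge V$, as the paper does.

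One small wording fix: $g\circ f$ also contracts the exceptional fibers over other points. What you mean is that the fiber over $p'$ is exactly $\Gamma_1\cup C\cup\Gamma_2$.
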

\begin{proof}
The geometric proof of Lemma~\ref{m:lem:contract} uses only
$C_X^2<0$ on a Fano surface, and therefore applies here without a
restriction on the contact positions. Its discrepancy and volume
identities prove all the geometric assertions. The reciprocal estimate
follows from $t\le6V$ and $V'\ge V$.

The exceptional locus over the new point on $Y$ consists of $C$ and the
two old exceptional divisors. Its negative intersection matrix has
diagonal blocks $M_1,M_2,1$ and connecting columns given by the two unit
vectors. Eliminating the first two blocks gives the scalar Schur
complement
\[
 1-(M_1^{-1})_{i_1i_1}-(M_2^{-1})_{i_2i_2}
 =1-\tau_1-\tau_2=w.
\]
The block determinant formula proves \eqref{eq:general-det} for this
resolution.

To pass to the minimal resolution, factor the morphism from $Y$ to that
minimal smooth resolution into contractions of exceptional $(-1)$-curves.
This is the usual factorization of a birational morphism between smooth
surfaces into point blowups \cite[Chapter V, Section 5]{Hart77}.
For each contracted curve, its negative self-intersection entry is one.
The intersection matrix of the remaining image curves is the Schur
complement of that entry, by the intersection formula for a point blowup.
Thus its determinant is unchanged. This argument concerns intersection
matrices and does not require the intermediate exceptional divisors to
remain SNC. The final empty matrix has determinant one. In particular,
the new point is allowed to change between cyclic and noncyclic type;
no preservation of its graph type is being assumed.
\end{proof}

\subsection{Negativity and the cost of each contraction}
\begin{proposition}\label{prop:step-cost}
In the situation of Lemma~\ref{lem:new-rigidity}, $C_X^2<0$.
Let $X'$ be its contraction and put $M=M_\cyc(X)$, $M'=M_\cyc(X')$.
Then $M'\le M$. If both contacts are cyclic endpoints with determinants
$r_1,r_2$, then $D'<r_1+r_2$ and
\[
 V^{-1}-(V')^{-1}\le72\min(r_1,r_2).
\]
For every other contact configuration,
\[
 V^{-1}-(V')^{-1}\le720(M-M').
\]
\end{proposition}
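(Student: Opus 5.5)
The plan is to settle negativity first and then treat the two contact configurations separately. Lemma~\ref{lem:new-rigidity} supplies two unit contacts at noncanonical points $p_1,p_2$ with determinants $D_1,D_2>100$, and at least one of them, say $p_1$, is a cyclic endpoint. By \eqref{eq:gamma-global} we have $C_X^2=t+\gamma_1+\gamma_2$ with $\gamma_1=-1/r_1$. The smallness of $t$ comes from $t\le6V<6/(1000\mathcal R)\le 6/(1000\,r_i)$, so $t$ is far below every $1/r_i$.

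\textbf{Negativity.} If both contacts are endpoints, then $C_X^2=t-1/r_1-1/r_2<0$ directly. If the second contact has $\gamma_2=0$, then $C_X^2=t-1/r_1<0$ as well. The remaining case is $\gamma_2>0$, where Lemma~\ref{m:lem:lambda} and Lemma~\ref{lem:fork-gap} give $\gamma_2\ge\eps/5$. Suppose $C_X^2\ge0$. From $t=B\cdot C_X\ge bC_X^2$ we get $0\le C_X^2\le 6t\le 36V<36\eps^2/1000$. Then $1/r_1=t+\gamma_2-C_X^2\ge \eps/5-O(\eps^2)$, so $r_1=O(1/\eps)$.

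\begin{itemize}
\item I would combine this with the integrality $D_1D_2C_X^2\in\Z$ and $D_1D_2t\in\Z$ (Lemma~\ref{m:lem:J}).
\item I would also use the exact formula $t=\alpha_1+\alpha_2-1$, in which $\alpha_1=1/r_1+L/r_1$ at an endpoint.
\end{itemize}

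The goal is a denominator bound $J\lesssim \mathcal R/\eps$ or $J\lesssim\eps^{-2}$ that contradicts \eqref{eq:new-threshold}. I expect this case to be the main obstacle. The crude bound $V\ge 1/(6r_1D_2)\gtrsim\eps/\mathcal R$ is not by itself below the threshold $(1000\mathcal R)^{-1}$. So I would first try to exploit that $C_X^2$ lies in a short interval $[0,36V)$ while being a multiple of $1/(r_1D_2)$. If that fails, I would use the explicit gap formulas to show that $C_X^2=0$ forces $\gamma_2=1/r_1-t$ and hence a bounded second determinant.

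\textbf{Endpoint--endpoint case.} Here $w=1/r_1+1/r_2-t$, so Lemma~\ref{lem:general-merge} gives $D'=r_1r_2w<r_1+r_2$. Since $t<1/(2\max r_i)$, we get $w\ge 1/\min(r_1,r_2)-t\ge 1/(2\min r_i)$. Then \eqref{eq:general-cost} yields a cost of at most $36/w\le72\min(r_1,r_2)$. For the mass, two cyclic points of total mass $r_1+r_2$ disappear, and at most one point of determinant $D'<r_1+r_2$ appears. This holds whether the new point is smooth, cyclic, or noncyclic, so $M'\le M$.

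\textbf{Other configurations.} Here $p_2$ is an interior cyclic contact or a fork contact, with $\gamma_2\ge0$ and $w=1/r_1-\gamma_2-t$. Now $D'=r_1D_2w\le D_2(1-r_1(\gamma_2+t))<D_2$. The mass always loses $r_1$, and it loses $D_2$ if $p_2$ is cyclic; it gains at most $D'$.

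\begin{itemize}
\item If $p_2$ is cyclic, then $M-M'\ge r_1+D_2-D'\ge r_1$.
\item If $p_2$ is a fork, then $M-M'\ge r_1-D'$ with $D'<D_2\le4/\eps$.
\end{itemize}

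On the cost side I would show $w\ge 1/(20\,(M-M'))$, splitting by whether $\gamma_2=0$ or $\gamma_2>0$.

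\begin{itemize}
\item If $\gamma_2=0$, then $w\ge 1/(2r_1)$ and the cost is at most $72r_1$. This needs $M-M'\gtrsim r_1/10$, which is automatic in the cyclic case. In the fork case it needs $r_1\gg 4/\eps$; otherwise $J\le 16/\eps^2$ and the threshold $\eps^2/1000$ is violated.
\item If $\gamma_2>0$, then $\gamma_2\ge\eps/5$ and $w>0$ force $r_1\le5/\eps$. I would compute $D'=r_1D_2w$ exactly, to see that the mass decrease $r_1+D_2-D'=r_1+r_1D_2(\gamma_2+t)$ is at least a fixed fraction of $1/w$. The positive-integer quantity $r_1D_2w=D'$ serves to relate $w$ to $1/(r_1D_2)$.
\end{itemize}

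Balancing these cases should produce the constant $720=36\cdot20$.
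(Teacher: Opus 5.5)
\paragraph{Verdict.}
There is a genuine gap, and it is exactly the case you flagged as the main obstacle: a cyclic interior contact with $(L,R)\ne(2,2)$.

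\paragraph{What is sound.}
Your endpoint--endpoint case, the $(2,2)$ interior contact ($\gamma_2=0$, $w\ge1/(2r_1)$, mass drop $\ge r_1$), and the fork contact are all correct and agree with the paper. For the fork, a positive gap forces $r_1=O(1/\eps)$ and $D_2\le4/\eps$, which violates the threshold. A zero gap is handled by $D_2/r_1<1/2$.

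\paragraph{Why your proposed mechanisms fail.}
None of them can settle the interior case with $(L,R)\ne(2,2)$, either for negativity or for the cost.
\begin{enumerate}
\item Clearing the denominator $r_1d$ of $C_X^2$ (with $d=D_2$) gives only $C_X^2\ge1/(r_1d)$ when it is positive. That yields $V\gtrsim\eps/\mathcal R$, which, as you note yourself, is not below $(1000\mathcal R)^{-1}$.
\item Analysing $C_X^2=0$ says nothing about $C_X^2>0$.
\item For the cost, $D'=r_1dw\in\Z_{>0}$ only gives $w\ge1/(r_1d)$, so the cost is at most $36r_1d$. The mass drop, however, is only about $r_1(H+1)$. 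Since $d/(H+1)$ is unbounded, no fixed constant such as $720$ can come out of this.
\end{enumerate}

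\paragraph{The missing idea.}
You need an integrality that \emph{cancels} the $1/r$ denominators rather than clearing them.
\begin{enumerate}
\item Write the endpoint discrepancy as $1-k/r$ with $k\in\Z_{\ge1}$ (use $r\alpha\in\Z$ and $\alpha<1$). Put $S=L+R$ and $H=LR-S$. Then $t=S/d-k/r$ and $C_X^2=LR/d-(k+1)/r$.
\item Eliminating $k/r$ gives
\[
 C_X^2=\frac{k+1}{k}\,t-\frac{\Theta}{kd},\qquad \Theta=(k+1)(L+R)-kLR\in\Z .
\]
\item Now split on the sign of $\Theta$.
 \begin{enumerate}
 \item If $\Theta<0$, then $1/L+1/R+1/(k+1)<1$, so this sum is at most $41/42$. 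This gives $C_X^2\ge-\Theta/(Sr)\ge1/(21r)\ge1/(21\mathcal R)$, contradicting $C_X^2\le36V<36/(1000\mathcal R)$.
 \item If $\Theta=0$, then $(L,R,k+1)$ is a Euclidean triple, so $S\le9$ and $k\le5$. Hence $d\le9/\eps$ and $r\le10/\eps$, and the denominator lemma gives $V\ge\eps^2/540$, which is impossible.
 \item If $\Theta\ge1$, then $kH<S$ together with $H/S\ge1/5$ forces $k\le4$. Therefore $w=\bigl(\Theta/d-(k+1)t\bigr)/k\ge1/(8d)$, which gives negativity and a cost of at most $288d$. On the mass side, $d+r-D'=(L-1)(R-1)r+tdr$ and $r>kd/S$ give a mass drop greater than $\tfrac25d$.
 \end{enumerate}
\end{enumerate}
This is the source of the constant: $720=288\cdot\tfrac52$, not $36\cdot20$.
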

\begin{proof}
Lemma~\ref{lem:new-rigidity} leaves three cases. Once negativity has
been proved in each case, Lemma~\ref{lem:general-merge} provides the
contraction and its determinant and volume formulas.

\emph{Case 1: a cyclic interior contact and a cyclic endpoint.}
Suppose the nonendpoint contact is cyclic. Use $d$ for its determinant,
$L,R\ge2$ for its side determinants, and set
\[
 S=L+R,\qquad H=LR-L-R\ge0.
\]
At the other, endpoint contact write the discrepancy as $1-k/r$.
Here $k\ge1$ is an integer, because this point is noncanonical and
$r\alpha\in\Z$. Then
\begin{equation}\label{eq:theta}
 t=S/d-k/r,\qquad \Theta=S-kH=(k+1)(L+R)-kLR\in\Z,
\end{equation}
and direct substitution in \eqref{eq:gamma-global} yields
\begin{equation}\label{eq:theta-square}
 C_X^2=\frac{k+1}{k}t-\frac{\Theta}{kd}
      =\frac{LR}{S}t-\frac{\Theta}{Sr}.
\end{equation}

\emph{The negative-integer case is impossible.}
If $\Theta<0$, then $1/L+1/R+1/(k+1)<1$. For three integers at least two,
a reciprocal sum less than one is at most $41/42$. Here is a complete
check. Order the integers as $A\le B\le C$. If $A\ge4$, the sum is at
most $3/4$. If $A=3$, the largest allowed case is $(3,3,4)$, with sum
$11/12$; $B\ge4$ gives at most $5/6$. If $A=2$, then $B=2$ is impossible;
$B=3$ forces $C\ge7$ and gives $41/42$; $B=4$ forces $C\ge5$ and gives
$19/20$; and $B\ge5$ gives at most $9/10$. Hence
\[
 -\Theta/S=\frac{(k+1)LR}{S}
 \left(1-\frac1L-\frac1R-\frac1{k+1}\right)\ge\frac1{21},
\]
using $k+1\ge2$ and $LR/S\ge1$. Equation~\eqref{eq:theta-square} gives
$C_X^2>1/(21r)\ge1/(21\mathcal R)$, whereas
$C_X^2\le36V<36/(1000\mathcal R)$. These inequalities contradict each other.

\emph{The zero-integer case is also impossible.}
If $\Theta=0$, the same ordered-integer argument gives exactly the
Euclidean triples $(2,3,6),(2,4,4),(3,3,3)$, up to permutation.
Consequently $S\le9$ and $k\le5$. Since $S/d\ge\eps$, we have
$d\le9/\eps$. Also
$k/r=S/d-t\ge\eps-6V\ge\eps/2$, whence $r\le10/\eps$.
The denominator bound gives $V\ge1/(6dr)\ge\eps^2/540$, again impossible.

\emph{The positive-integer case gives negativity and mass decrease.}
Assume $\Theta>0$. If $H=0$, then $L=R=2$ and
\[
 w=1/r-t\ge1/(2r),\qquad D'=drw=d(1-tr)<d.
\]
We used $tr\le6V\mathcal R<6/1000$. The contribution of the new point to $M_\cyc(X')$ is either $D'$ or zero.
Thus $M_\cyc(X)-M_\cyc(X')\ge r$, and the reciprocal cost is at most
$72r\le72(M_\cyc(X)-M_\cyc(X'))$.

If $H>0$, the elementary inequality $H/S\ge1/5$ and $kH<S$ imply
$k\le4$. Since $\Theta\ge1$, $d\le\mathcal R$, and $t\le6V$, we obtain
\[
 w=\frac{\Theta/d-(k+1)t}{k}\ge\frac1{2kd}\ge\frac1{8d}.
\]
The first estimate holds because $(k+1)t<30/(1000d)<1/(2d)$.
Writing $T=tdr=Sr-kd\in\Z_{>0}$, the determinant identity gives
\begin{align}
 D'&=(k+1)d-LRr,\label{eq:interior-D}\\
 d+r-D'&=(L-1)(R-1)r+T.\label{eq:interior-drop}
\end{align}
Now $r>kd/S\ge d/S$ and
$(L-1)(R-1)/(L+R)\ge2/5$ whenever $L,R\ge2$ are not both two.
The last inequality is minimized at $(2,3)$: for fixed one variable the
ratio increases in the other. Therefore
\begin{align*}
 M_\cyc(X)-M_\cyc(X')&\ge d+r-D'>\tfrac25d,\\
 V^{-1}-(V')^{-1}&\le288d
 \le720\bigl(M_\cyc(X)-M_\cyc(X')\bigr).
\end{align*}
The mass inequality remains true if the new point is canonical or
noncyclic, since then its contribution to $M_\cyc(X')$ is zero.

\medskip\noindent
\emph{Case 2: a noncyclic contact and a cyclic endpoint.}
Suppose the other contact is a fork with determinant $D\le4/\eps$.
If its gap is positive, Lemma~\ref{lem:fork-gap} and
\eqref{eq:gamma-global} give
\[
 1/r\ge\eps/4-30V\ge\eps/8,
\]
so $r\le8/\eps$. Thus $V\ge1/(6Dr)\ge\eps^2/192$, impossible.
Only the zero-gap case remains, and it gives
\[
 w=1/r-t\ge1/(2r),\qquad D'=Drw=D(1-tr)<D.
\]
This proves negativity. To prove a decrease of \emph{cyclic} mass, one
must additionally consider a new cyclic point, although the old fork
was not counted in that mass. The small-volume denominator estimate is
exactly what is needed:
\[
 \frac1{6Dr}\le V<\frac{\eps^2}{1000}
 \quad\Longrightarrow\quad
 \frac Dr<\frac{6D^2\eps^2}{1000}\le\frac{96}{1000}<\frac12.
\]
Thus $D'<D<r/2$. If the new point is noncanonical cyclic, the mass drop
is $r-D'>r/2$; if it is canonical, noncyclic, or smooth, the drop is $r$.
In all cases
\begin{equation}\label{eq:fork-cost}
 V^{-1}-(V')^{-1}\le72r
 \le144\bigl(M_\cyc(X)-M_\cyc(X')\bigr).
\end{equation}

\medskip\noindent
\emph{Case 3: two cyclic endpoints.}
If both contacts are cyclic endpoints, then
\[
 w=1/r_1+1/r_2-t\ge\tfrac12(1/r_1+1/r_2),\quad
 D'=r_1+r_2-tr_1r_2<r_1+r_2.
\]
The exceptional divisor before minimization is a chain joining the two
old chains by $C$. Blowing down its exceptional $(-1)$-curves preserves
the chain property: an interior blowdown joins the two distinct
neighbors transversely, and an end blowdown shortens the chain.
Thus the new point is cyclic if it is singular. Moreover, the reciprocal
cost is at most $72\min(r_1,r_2)$ by \eqref{eq:general-cost}.
The cyclic mass does not increase. We have proved negativity in every
case of Lemma~\ref{lem:new-rigidity}.

\end{proof}

\section{The global volume estimate}\label{sec:main-proof}
\subsection{A weighted binary forest}\label{m:sec:iteration}
\begin{lemma}[Weighted binary forest]\label{m:lem:entropy}
Let a finite binary forest have $k\ge1$ leaves, with positive weights $w_1,\ldots,w_k$, of total mass $M$. Give each vertex the sum of the weights of its descendant leaves. At an internal vertex, let $x,y$ be its children's weights. Then
\begin{equation}\label{m:eq:entropy}
 \sum_{\text{internal vertices}}\min\{x,y\}
 \le\frac M2\log_2 k.
\end{equation}
\end{lemma}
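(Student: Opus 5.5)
We will prove \eqref{m:eq:entropy} by comparing the left side with a Shannon-entropy quantity. The key point is that the binary entropy function dominates twice the minimum; this is exactly where the constant $\tfrac12$ comes from. Write $h(p)=-p\log_2p-(1-p)\log_2(1-p)$ for $p\in[0,1]$, with $h(0)=h(1)=0$. We will first establish the elementary inequality
\[
 h(p)\ge2\min\{p,1-p\}.
\]
By the symmetry $h(p)=h(1-p)$ it suffices to treat $p\in[0,1/2]$. There $h$ is concave, with $h(0)=0$ and $h(1/2)=1$, so $h$ lies above the chord $p\mapsto2p$.

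Next we treat a single tree with root weight $W$ and leaf weights $w_\ell$. We claim the chain-rule identity
\[
 \sum_{\ell}w_\ell\log_2\frac{W}{w_\ell}
 =\sum_{\text{internal }v}(x_v+y_v)\,h\!\left(\frac{x_v}{x_v+y_v}\right),
\]
where $x_v,y_v$ are the weights of the children of $v$, so that $x_v+y_v$ is the weight of $v$. We will prove it by induction on the number of internal vertices, splitting at the root. Write the root's children as having weights $x,y$ with $x+y=W$. For a leaf $\ell$ under the child of weight $x$, write $\log_2(W/w_\ell)=\log_2(W/x)+\log_2(x/w_\ell)$, and similarly under the other child. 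Summing the first pieces gives $x\log_2(W/x)+y\log_2(W/y)=W\,h(x/W)$, which is the root term. Summing the second pieces gives the same identity for the two subtrees, which holds by induction. A leaf subtree contributes zero to both sides, which settles the base case. Combining the identity with the inequality for $h$ gives
\[
 \sum_{\text{internal }v}\min\{x_v,y_v\}
 \le\frac12\sum_{\ell}w_\ell\log_2\frac{W}{w_\ell}.
\]

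It remains to bound the entropy term. For a tree with $k_j$ leaves and root weight $W_j$, Jensen's inequality applied to the concave function $\log_2$ with probability weights $w_\ell/W_j$ gives
\[
 \sum_\ell\frac{w_\ell}{W_j}\log_2\frac{W_j}{w_\ell}
 \le\log_2\Bigl(\sum_\ell\frac{w_\ell}{W_j}\cdot\frac{W_j}{w_\ell}\Bigr)=\log_2k_j .
\]
Hence the tree contributes at most $\tfrac12W_j\log_2k_j\le\tfrac12W_j\log_2k$. Summing over the components of the forest, and using $\sum_jW_j=M$, yields \eqref{m:eq:entropy}.

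No step is a serious obstacle. The only place requiring care is the chain-rule identity, specifically the bookkeeping that each leaf's $\log_2(W/w_\ell)$ telescopes along its root-to-leaf path.
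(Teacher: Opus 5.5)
Your proof is correct and follows essentially the paper's route: your chain-rule identity is the paper's telescoping of $F(x+y)-F(x)-F(y)=(x+y)h_2(p)$ with $F(u)=u\log_2u$, and both proofs use the same chord bound $h_2(p)\ge2\min\{p,1-p\}$. The only difference is the last step. The paper bounds the whole forest at once via $\sum_jF(M_j)\le F(M)$ and $\sum_iF(w_i)\ge kF(M/k)$, whereas you apply Jensen tree by tree and then use $\log_2k_j\le\log_2k$; both versions are valid.
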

\begin{proof}
Put $F(u)=u\log_2u$ for $u>0$. For $p=x/(x+y)$, direct calculation gives
\[
 F(x+y)-F(x)-F(y)=(x+y)h_2(p),
\]
where $h_2(p)=-p\log_2p-(1-p)\log_2(1-p)$. Concavity of $h_2$, together with its values zero at $0,1$ and one at $1/2$, implies $h_2(p)\ge2\min\{p,1-p\}$. This proves
\[
 2\min\{x,y\}\le F(x+y)-F(x)-F(y).
\]
Sum this inequality over the internal vertices. All nonroot, nonleaf terms cancel, leaving the sum of $F$ at the roots minus $\sum_iF(w_i)$. If the root masses are $M_j$, then
\[
 \sum_jF(M_j)\le F(M),\qquad
 \sum_iF(w_i)\ge kF(M/k)=M\log_2(M/k).
\]
The first inequality follows from $\sum_j(M_j/M)\log_2(M_j/M)\le0$; the second is convexity of $F$. Their difference is $M\log_2k$, proving \eqref{m:eq:entropy}. The statement includes isolated leaves and the case $k=1$.
\end{proof}

\subsection{Iteration at the enlarged threshold}
\begin{theorem}[The contraction estimate]\label{thm:new-abstract}
Let $X_0$ be an $\eps$-lc del Pezzo surface. Suppose $\mathcal R\ge1$ bounds
all noncanonical determinants throughout the contractions just described.
Put $M_0=M_\cyc(X_0)$ and $k_+=\max\{2,k_0\}$, where $k_0$ counts the
initial noncanonical cyclic points. Then
\begin{equation}\label{eq:new-abstract}
 V(X_0)\ge\left[
 \max\{\cExc^{-1},1000\mathcal R,1000\eps^{-2}\}
 +720M_0+36M_0\log_2k_+\right]^{-1}.
\end{equation}
\end{theorem}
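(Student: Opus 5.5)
We prove \eqref{eq:new-abstract} by running the contraction process and charging the growth of $V^{-1}$ to the cyclic mass. Put $T:=\max\{\cExc^{-1},1000\mathcal R,1000\eps^{-2}\}$. We build a sequence $X_0\to X_1\to\cdots\to X_m$ of $\eps$-lc del Pezzo surfaces. If $V(X_j)^{-1}\le T$ or $X_j$ is exceptional, we stop. Otherwise Proposition~\ref{prop:complement-input} gives an $n$-complement with $n\le6$. Then $V(X_j)<\min\{(1000\mathcal R)^{-1},\eps^2/1000\}$, so Lemma~\ref{lem:new-rigidity} applies, and we contract the complement curve by Proposition~\ref{prop:step-cost}. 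The Picard number drops by one at each step (Lemma~\ref{m:lem:contract}), so the process terminates. At the final surface, $V(X_m)^{-1}\le T$: in the exceptional case this holds because $V\ge\cExc$, and otherwise it is the stopping rule. Telescoping gives
\[
 V(X_0)^{-1}=V(X_m)^{-1}+\sum_j\bigl(V(X_j)^{-1}-V(X_{j+1})^{-1}\bigr),
\]
so it remains to bound the total of the increments by $720M_0+36M_0\log_2k_+$.

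We split the steps into two kinds. \emph{Mass steps} are those in which the contact configuration is not two cyclic endpoints. By Proposition~\ref{prop:step-cost} each costs at most $720(M_j-M_{j+1})$. \emph{Endpoint merges} cost at most $72\min(r_1,r_2)$ and satisfy $D'<r_1+r_2$. Since $M_\cyc$ never increases (Proposition~\ref{prop:step-cost}), the mass steps telescope to at most $720(M_0-M_m)\le720M_0$.

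For the endpoint merges we build a binary forest. Its leaves are the $k_0$ initial noncanonical cyclic points, with weights equal to their initial determinants; the total weight is $M_0$. We track, for every current noncanonical cyclic point, a set of leaves it ``descends from'' and an upper weight $W$, the sum of those leaf weights. We maintain the invariant that the current determinant is at most $W$. An endpoint merge of two points with weights $W_1,W_2$ creates an internal vertex with these two children. The new point, if it is noncanonical cyclic, inherits the union of leaves. Its determinant satisfies $D'<r_1+r_2\le W_1+W_2$, so the invariant persists, and the cost is at most $72\min(r_1,r_2)\le72\min(W_1,W_2)$.

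A mass step modifies a point's determinant but never increases it in the relevant cases: in Case~1 the new $D'<d$ or $D'<d+r$, and in Case~2 the cyclic endpoint is absorbed. The main obstacle is keeping the invariant through these steps while still producing a genuine forest. Points created in Case~2 come from a fork and have no leaves, so we must check the determinant bound for them. In Case~2 we have $D'<D<r/2$, so we attach the new point to the leaves of the absorbed endpoint, keeping $W$ and making no forest vertex. In Case~1 we attach the new point to the leaves of both old cyclic points without creating a vertex. This is a union of disjoint leaf sets with weights added, and it requires $D'\le W_d+W_r$, which holds since $D'<d+r$. Such unions do not fit the binary structure directly. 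The fix is to regard them as contracted edges: merging leaf sets without cost only coarsens the forest, and Lemma~\ref{m:lem:entropy} still applies after inserting a zero-cost internal vertex. Its $\min$ term is simply not charged, because the sum only overestimates the cost.

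With the forest in place, Lemma~\ref{m:lem:entropy} gives a total endpoint cost of at most $72\cdot\frac{M_0}2\log_2k_0\le36M_0\log_2k_+$. We use $k_+$ so that the case $k_0\le1$ is covered, where there are no endpoint merges at all. Adding the mass-step total, the endpoint total and the terminal bound $T$ yields \eqref{eq:new-abstract}.
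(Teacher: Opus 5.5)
Your proof is correct and follows the paper's argument essentially step for step. It uses the same stopping threshold $\min\{\cExc,(1000\mathcal R)^{-1},\eps^2/1000\}$, with termination from the drop in Picard number, and charges each nonendpoint step at $720$ times the never-negative cyclic-mass decrease. It also uses the same leaf-cluster bookkeeping: Case~1 and Case~3 steps are binary mergers, Case~2 steps are unary inheritances of the cyclic cluster, and Lemma~\ref{m:lem:entropy} bounds the endpoint costs. The Case~1 unions you flagged as an obstacle are in fact ordinary binary mergers in the forest; the paper simply leaves their $\min$-terms uncharged, exactly as your fix does.
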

\begin{proof}
Continue while $V<h=\min\{\cExc,(1000\mathcal R)^{-1},\eps^2/1000\}$, choosing
a new bounded complement at each stage. The previous lemmas provide a
negative curve and an $\eps$-lc Fano contraction. Picard number drops
by one, so the process terminates with $V_m\ge h$. A rank-one surface
cannot contain a negative curve, which rules out termination below $h$.

All steps decrease cyclic mass. For nonendpoint steps the estimates above
give cost at most $720$ times that step's mass decrease. The sum of their
mass decreases is at most $M_0$, even though the sequence also contains
endpoint steps. Their total cost is therefore at most $720M_0$.

We describe the labels needed for the other steps, including every change
of type. Initially label each noncanonical cyclic point with one leaf of
weight its determinant. A current cyclic point carries a cluster of
initial leaves, whose total weight bounds its current determinant.
When two cyclic points are used, merge their disjoint clusters. If the
new point is noncanonical cyclic, attach the merged cluster to it;
otherwise retire the merged cluster. These operations include interior
contacts, since their new determinant is smaller than the sum of the two
old ones. When a fork and a cyclic point are used, a new noncanonical
cyclic point inherits \emph{only} the existing cyclic cluster: its
determinant has decreased to less than half the previous cyclic
determinant. Otherwise that cluster is retired. Retired labels are never
revived, even if their former curves are represented within a later
noncyclic exceptional graph.

Clusters consequently remain disjoint, and their binary mergers form a
forest on the original leaves. Unary inheritances need no extra vertex.
The endpoint--endpoint steps are a subset of its binary mergers. At any
such step, the two determinants are bounded by its child masses $x,y$.
Lemma~\ref{m:lem:entropy} bounds the sum of all $\min(x,y)$ by
$(M_0/2)\log_2 k_+$. Thus the total endpoint cost is at most
$36M_0\log_2k_+$. Add the other costs and $1/V_m\le1/h$.
\end{proof}

\begin{proof}[Proof of Theorem~\ref{thm:main}]
For Fano surfaces take $\mathcal R=344\eps^{-3}$. This works at every step by
Proposition~\ref{m:prop:mass-new}, and $M_0\le344\eps^{-3}$,
$k_+\le18\eps^{-3}$ by the Picard bound. Substitute in
Theorem~\ref{thm:new-abstract}. For the simplified constant use
$\log_2(18\eps^{-3})\le5+3\log_2(1/\eps)$ and
$\max(a,b)\le a+b$. The polynomial part of the resulting denominator
is $\eps^{-3}(653600+37152\log_2(1/\eps))$.
The stated bound follows since $\eps\le1$. For weak del Pezzo surfaces
pass to the crepant anticanonical model, preserving $V$ and $\eps$.
\end{proof}

\subsection{The earlier threshold and the intermediate exponents}\label{m:sec:rigidity}
The first contraction argument imposed a stronger threshold which
excluded all nonendpoint contacts. We retain its proof to explain the
improvements between exponents fourteen and three.

\begin{lemma}[Endpoint rigidity]\label{m:lem:rigidity}
Let $X$ have an $n$-complement, $1\le n\le6$, and suppose all
noncanonical determinants are at most $R_0\ge10/\eps$. If
\begin{equation}\label{m:eq:tiny}
 V<\eps/(60R_0),
\end{equation}
then the chosen complement component has smooth rational strict
transform $C$ with $C^2=-1$. It meets exactly two exceptional fibers,
both noncanonical cyclic chains, at endpoints with unit intersection
vectors. Their determinants satisfy $r_1,r_2>10/\eps$, and
\begin{equation}\label{m:eq:negativeC}
 t=\alpha_1+\alpha_2-1,\quad
 C_X^2=t-1/r_1-1/r_2<0,\quad
 -C_X^2\ge\tfrac12(1/r_1+1/r_2).
\end{equation}
\end{lemma}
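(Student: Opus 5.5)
The plan is to mirror the proof of Lemma~\ref{lem:new-rigidity}, now at the stronger threshold \eqref{m:eq:tiny}. Write $t=(-K_X)\cdot C_X$. From \eqref{m:eq:bt}, Lemma~\ref{m:lem:budget} and Lemma~\ref{m:lem:J} we get
\[
 0<t\le nV\le6V,\qquad \sum_p\lambda_p\le2+30V,\qquad V\ge1/(nJ).
\]
Since $R_0\ge10/\eps$, the threshold gives $V<\eps^2/600$ and $V<1/(600R_0)\cdot(10\eps)\le 1/(60R_0)$. In particular $V<(1000R_0)^{-1}$ fails only by a constant, so I will redo the counting rather than cite the lemma.

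\emph{Counting points.} The budget $2+30V<13/6$ leaves at most three noncanonical points on $C_X$.
\begin{itemize}
\item With three points, two of the determinants are $\le3$, so $J\le6R_0$ and $V\ge1/(36R_0)$. This contradicts $V<\eps/(60R_0)$.
\item With at most one point, $V\ge1/(6R_0)$, again a contradiction.
\item So there are exactly two points. If one determinant is $\le10/\eps$, then $J\le10R_0/\eps$ and $V\ge\eps/(60R_0)$, a contradiction. Hence both determinants exceed $10/\eps\ge10$.
\end{itemize}
With two points of determinant $>10$, each contributes $\lambda>9/10$. Adding a canonical point ($\ge1/2$) or a nonunit contact (which doubles one contribution) would push the sum above $2+30V$. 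So the contacts are unit vectors and there are no other singular points on $C_X$.

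\emph{Geometry of $C$.} If $C^2>0$, Lemma~\ref{lem:complement-grid} gives $V\ge1/36$, which is impossible. Otherwise adjunction gives $0<t=2-2p_a(C)+C^2-\Delta\cdot C$, forcing $p_a(C)=0$ and $C^2\in\{0,-1\}$. The case $C^2=0$ gives $t=\alpha_1+\alpha_2\ge2\eps>6V$, so $C^2=-1$. Then $C$ is a smooth rational curve, and the single unit contact per fiber is transverse at a smooth point of one component. We now have $t=\alpha_1+\alpha_2-1$ and $C_X^2=t+\gamma_1+\gamma_2$ as in \eqref{eq:gamma-global}, with $\gamma_1+\gamma_2\le5t\le30V$.

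\emph{Endpoints and cyclicity.} This is the step I expect to be the main obstacle, because each gap alternative must be excluded using the threshold.
\begin{itemize}
\item A fork has $D\le4/\eps<10/\eps$, contradicting the bound above, so both points are cyclic.
\item An interior contact with $(L,R)=(2,2)$ forces $r\le4/\eps$, which is excluded in the same way.
\item Any other interior contact has $\gamma\ge\eps/5$ by Lemma~\ref{m:lem:lambda}, while an endpoint has $\gamma=-1/r>-\eps/10$.
\end{itemize}
For the last case, suppose one contact is interior with $\gamma_1\ge\eps/5$. Then $\gamma_1+\gamma_2>\eps/5-\eps/10=\eps/10$. But $\gamma_1+\gamma_2\le30V<\eps^2/20\le\eps/20$, a contradiction. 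Hence both contacts are cyclic endpoints with $\gamma_i=-1/r_i$, which gives
\[
 C_X^2=t-1/r_1-1/r_2.
\]

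\emph{Negativity.} It remains to show $t\le\frac12(1/r_1+1/r_2)$. We have $t\le6V<\eps/(10R_0)\le 1/(10R_0)$ and $1/r_i\ge1/R_0$, so $t<\frac12(1/r_1+1/r_2)$. This yields both $C_X^2<0$ and $-C_X^2\ge\frac12(1/r_1+1/r_2)$.
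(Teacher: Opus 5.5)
Your proposal is correct and follows essentially the same route as the paper: the same denominator counting to get exactly two noncanonical points with determinants $>10/\eps$, the same budget exclusion of canonical points and nonunit contacts, the same adjunction step, and the same final estimate $t<\eps/(10R_0)\le\tfrac12(1/r_1+1/r_2)$. The only difference is order: you do the adjunction step before ruling out interior contacts, and you phrase that exclusion through $\gamma_1+\gamma_2\le30V$. Since $\sum_p\lambda_p=2+\gamma_1+\gamma_2$ for unit contacts, this is the same inequality as the paper's $\lambda$-budget comparison ($>2+\eps/10$ versus $<2+\eps/20$).
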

\begin{proof}
The budget gives
$\sum\lambda_p<2+\eps/(2R_0)\le2+1/20<13/6$.
The counting argument of Proposition~\ref{m:prop:R2} leaves at most
three noncanonical points, with two determinants at most three in the
three-point case. That case would give $V\ge1/(36R_0)$; at most one
point would give $V\ge1/(6R_0)$. Both contradict \eqref{m:eq:tiny}.
There are exactly two noncanonical points. If either determinant were
at most $10/\eps$, Lemma~\ref{m:lem:J} would give
$V\ge\eps/(60R_0)$. Thus both exceed $10/\eps$, and the fork bound
forces both points to be cyclic.

These two points contribute more than $2-\eps/5$. An additional
canonical point would contribute at least $1/2$, and a nonunit contact
would make the total exceed $3-3\eps/10$. Both exceed the available
budget. An interior contact with side determinants $(2,2)$ has
$r\le4/\eps$ and is already excluded. Any other interior contact has
$\lambda\ge1+\eps/5$, making the two-point total greater than
$2+\eps/10$, whereas the budget is less than $2+\eps/20$.
Both contacts are therefore endpoints.

If $C^2>0$, Lemma~\ref{lem:complement-grid} gives $V\ge1/36$.
Otherwise adjunction forces $p_a(C)=0$ and $C^2=-1$ or $0$.
The latter gives $t=\alpha_1+\alpha_2\ge2\eps$, a contradiction.
Thus $C$ is a smooth rational $(-1)$-curve, and its unit intersections
are transverse away from exceptional nodes. At a cyclic endpoint
$\tau_j=\alpha_j-1/r_j$; the adjunction and pullback formulas give
the first two identities in \eqref{m:eq:negativeC}. Finally,
$t\le6V<\eps/(10R_0)$ and $1/r_1+1/r_2\ge2/R_0$, giving the last one.
\end{proof}

\begin{proposition}[The earlier iterative estimate]\label{m:prop:iterate}
Let $R_0\ge10/\eps$ bound the noncanonical determinants on all surfaces
obtained by the contractions of Lemma~\ref{m:lem:rigidity}. For a Fano
surface $X_0$, put $M_0=M_\cyc(X_0)$ and $k_+=\max\{2,k_0\}$, where
$k_0$ counts its noncanonical cyclic points. Then
\begin{equation}\label{m:eq:abstract-bound}
 V(X_0)\ge
 \left[\max\{\cExc^{-1},60R_0/\eps\}
             +36M_0\log_2 k_+\right]^{-1}.
\end{equation}
\end{proposition}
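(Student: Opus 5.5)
The plan is to run the same iteration as in Theorem~\ref{thm:new-abstract}, but with the stronger threshold $h_0=\min\{\cExc,\eps/(60R_0)\}$. Under this threshold only endpoint--endpoint contractions occur, so no mass-decrease accounting is needed.

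\emph{Iteration.} Start from the Fano surface $X_0$. While $V(X_j)<h_0$, the surface is nonexceptional, since otherwise Proposition~\ref{prop:complement-input} gives $V\ge\cExc$. So it has an $n$-complement with $n\le6$. Lemma~\ref{m:lem:rigidity} applies because $R_0$ bounds the determinants at every stage by hypothesis. It produces a complement curve $C_X$ with $C_X^2<0$ meeting two noncanonical cyclic chains at endpoints, with determinants $r_1,r_2$. Lemma~\ref{m:lem:contract} contracts $C_X$ to an $\eps$-lc del Pezzo surface $X_{j+1}$ with $\rho$ smaller by one. By Lemma~\ref{lem:general-merge}, using $t\le6V$ and \eqref{m:eq:negativeC},
\[
 V(X_j)^{-1}-V(X_{j+1})^{-1}\le36/w,\qquad w\ge\tfrac12(1/r_1+1/r_2)\ge\tfrac1{2\min(r_1,r_2)}.
\]
The cost of each step is therefore at most $72\min(r_1,r_2)$. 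I should check whether the constant in \eqref{m:eq:abstract-bound} reads $36$ because the sharper form $36/w\le 72\,r_1r_2/(r_1+r_2)$ is used. In any case the forest lemma supplies the factor $1/2$, and $\frac M2\log_2k$ times $72$ gives $36M_0\log_2k_+$, which matches. The process terminates, since $\rho$ drops and a rank-one surface has no negative curve. At termination $V_m\ge h_0$, so $V_m^{-1}\le\max\{\cExc^{-1},60R_0/\eps\}$.

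\emph{Forest bookkeeping.} Label each initial noncanonical cyclic point by a leaf whose weight is its determinant. As in Case~3 of Proposition~\ref{prop:step-cost}, contracting a $(-1)$-curve joining two chain endpoints yields a chain after minimization, so the new point is cyclic or smooth. Its determinant satisfies $D'=r_1+r_2-tr_1r_2<r_1+r_2$. Merge the two clusters at each step, attaching the result to the new point if it is noncanonical cyclic and retiring it otherwise. Only cyclic points appear in these steps, so no forks enter the accounting.

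By induction each current determinant is at most its cluster mass, and clusters stay disjoint. Each step's $\min(r_1,r_2)$ is therefore at most $\min(x,y)$ of the corresponding binary merger. Lemma~\ref{m:lem:entropy} bounds the total by $(M_0/2)\log_2k_+$, where $k_0=1$ or $0$ is covered by $k_+=\max\{2,k_0\}$. Summing the telescoping reciprocal differences gives \eqref{m:eq:abstract-bound}.

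\emph{Main obstacle.} The delicate point is that the two contacts must always be existing noncanonical cyclic points carrying clusters. This requires that the contacted points be noncanonical, which holds by Lemma~\ref{m:lem:rigidity}. It also requires that every noncanonical cyclic point on a later surface either comes from a cluster or is new. Since contractions only merge points, a point untouched by the contraction keeps its label. Only if this fails would we need the mass-decrease accounting of the newer argument.
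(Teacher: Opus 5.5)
Your proposal is correct and follows essentially the same route as the paper. The paper also uses the threshold $\min\{\cExc,\eps/(60R_0)\}$, the negative endpoint curve from Lemma~\ref{m:lem:rigidity}, the contraction and Schur-complement formulas (giving $r'=r_1+r_2-tr_1r_2$ and cost at most $72\min(r_1,r_2)$), cluster merging with retirement, and Lemma~\ref{m:lem:entropy}. The one difference is that the paper notes the new point is never smooth, since $tr_1r_2<\min(r_1,r_2)/10$ forces $r'>\max(r_1,r_2)$; your retirement rule already covers a smooth image, so this does not affect your argument.
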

\begin{proof}
Below $h=\min\{\cExc,\eps/(60R_0)\}$ a bounded complement exists and
Lemma~\ref{m:lem:rigidity} supplies a negative curve. By the geometric
contraction and Schur-complement formulas, it replaces two cyclic points
by a cyclic point, with
\begin{equation}\label{m:eq:merge}
 r'=r_1+r_2-tr_1r_2<r_1+r_2,
 \qquad \frac1V-\frac1{V'}\le72\min(r_1,r_2).
\end{equation}
The chain assertion follows by eliminating the exceptional $(-1)$-curves
in the chain joining the two old resolutions. It cannot disappear:
$tr_1r_2<\min(r_1,r_2)/10$ implies $r'>\max(r_1,r_2)>1$.
The new point is allowed to be canonical.

Continue with a fresh complement until $V_m\ge h$. Termination follows
from the drop of Picard number, since a rank-one surface cannot contain
the required negative curve. Label the initial noncanonical cyclic
points by leaves of weights $D_p$, and merge labels at each contraction.
A current determinant is at most its descendant leaf mass by
\eqref{m:eq:merge}; a canonical point retires its cluster. No fork
participates in this sequence. Thus these operations give a binary
forest of total mass $M_0$, and Lemma~\ref{m:lem:entropy} bounds the
sum of reciprocal-volume costs by $36M_0\log_2k_+$. Add
$1/V_m\le1/h$ to obtain \eqref{m:eq:abstract-bound}.
\end{proof}

\begin{remark}[Exponents eight, six, and four]\label{m:sec:fourth}
With the original estimate $H=4096\eps^{-5}$ and
$R_0=H\eps^{-2}$, the chain identity gives
$M_0\le2H\eps^{-2}=2R_0$ and $k_+\le H$. Since
$\log_2H\le17/\eps$ for $0<\eps\le1$, \eqref{m:eq:abstract-bound}
gives
\[
 V\ge\frac{\eps^8}{\cExc^{-1}+5259264}.
\]
Replacing only the Picard estimate by $O(\eps^{-3})$ makes the same
argument sixth order: the local chain estimate still loses a factor
$\eps^{-2}$. The direct mass estimate in
Proposition~\ref{m:prop:mass-new} removes that loss. Substituting
$R_0=344\eps^{-3}$, $M_0\le344\eps^{-3}$, and
$k_+\le18\eps^{-3}$ gives
\begin{align}
 V&\ge[\max\{\cExc^{-1},20640\eps^{-4}\}
       +12384\eps^{-3}\log_2(18\eps^{-3})]^{-1},\label{m:eq:volume-refined-new}\\
 V&\ge\frac{\eps^4}{\cExc^{-1}+119712}.\label{m:eq:volume-four-new}
\end{align}
For the second inequality use $\log_2(18\eps^{-3})\le8/\eps$.
Independently, the same mass bound in Proposition~\ref{m:prop:R2}
gives the noniterative estimate
$V\ge\min\{\cExc,\eps^6/710016\}$.
These statements extend to weak del Pezzo surfaces by their anticanonical
models. The final improvement from \eqref{m:eq:volume-refined-new} to
Theorem~\ref{thm:main} changes the stopping threshold while retaining
the cubic mass bound.
\end{remark}

\section{A fixed number of noncanonical points}\label{sec:fixed}
\begin{proposition}[The linear resolution-rank bound]\label{prop:fixed-rank}
If there are at most $k\ge1$ noncanonical points, then
\[
 \rho(Y)<10+\frac{3k(1-\eps)^2}{\eps}\le\frac{C_k}{\eps},
 \qquad C_k=\max\{10,3k\}.
\]
If all noncanonical points are cyclic, $3k$ can be replaced by $2k$.
Every noncanonical determinant is at most $C_k\eps^{-3}$.
\end{proposition}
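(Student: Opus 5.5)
The plan is to read the rank bound off the Noether identity \eqref{m:eq:noether} in Lemma~\ref{m:lem:noether}. Since $V(X)>0$,
\[
 \rho(Y)=10-V(X)+\sum_p\delta_p<10+\sum_p\delta_p .
\]
Canonical points have $\Delta_p=0$, hence $\delta_p=0$. It therefore suffices to prove the local estimates
\[
 \delta_p\le\frac{2(1-\eps)^2}{\eps}\ \text{at a cyclic point},\qquad
 \delta_p\le\frac{3(1-\eps)^2}{\eps}\ \text{at a noncyclic point},
\]
and sum them over the at most $k$ noncanonical points. The strictness of the rank bound comes from $V>0$.

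For the local estimates I would start from \eqref{m:eq:delta}, which gives $\delta_p=\sum_i(b_i-2)(1-\alpha_i)\le(1-\eps)\sum_i(b_i-2)$, using $b_i\ge2$ and $\alpha_i\ge\eps$. It then remains to bound $\sum_i(b_i-2)$, and this is the only step needing an idea. I would compute the weighted sum $\sum_i(b_i-2)\alpha_i$ exactly by summing the valency equation \eqref{m:eq:valency} over all vertices:
\[
 \sum_i(b_i-2)\alpha_i=\sum_i\Bigl(\sum_{j\sim i}\alpha_j-2\alpha_i\Bigr)+\sum_i(2-v_i)
 =\sum_i(v_i-2)\alpha_i+2 .
\]
Here each $\alpha_j$ is counted once for each neighbour, and $\sum_i(2-v_i)=2$ because the graph is a tree.

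For a chain, only the two endpoints have $v_i-2\neq0$, each with $v_i-2=-1$. This gives $\sum(b_i-2)\alpha_i=2-\alpha_1-\alpha_s\le2-2\eps$; the case $s=1$ gives $2-2\alpha_1$ directly. For a fork, the centre contributes $+\alpha_0$ and each of the three tips contributes $-\alpha_{\ell_j}$. This gives $\sum(b_i-2)\alpha_i=2+\alpha_0-\sum_j\alpha_{\ell_j}\le3-3\eps$, using $\alpha_0\le1$ from Lemma~\ref{m:lem:positive}.

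Dividing by the lower bound $\alpha_i\ge\eps$ then gives $\sum(b_i-2)\le2(1-\eps)/\eps$ and $\sum(b_i-2)\le3(1-\eps)/\eps$, respectively. Combined with the first paragraph, this yields the claimed bounds on $\delta_p$. It also yields the refinement $2k$ in place of $3k$ when all noncanonical points are cyclic.

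For the second inequality I would use $(1-\eps)^2\le1-\eps$ to write
\[
 10+\frac{3k(1-\eps)^2}{\eps}\le10-3k+\frac{3k}{\eps}.
\]
If $3k\ge10$, then $10-3k\le0$ and the right side is at most $3k/\eps=C_k/\eps$. If $3k<10$, then $10-3k\le(10-3k)/\eps$ and the right side is at most $10/\eps=C_k/\eps$.

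Finally, for the determinants I would argue separately for the two singularity types. At a noncanonical cyclic point, Lemma~\ref{m:lem:chain} and the inequality $s_p+1\le\rho(Y)$ from Lemma~\ref{m:lem:noether} give $D_p\le(s_p+1)\eps^{-2}\le C_k\eps^{-3}$. At a noncyclic point, Lemma~\ref{m:lem:fork} gives $D_p\le4/\eps$, which is at most $C_k\eps^{-3}$ since $C_k\ge10$ and $\eps\le1$.
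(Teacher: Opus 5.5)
Your proposal is correct and follows essentially the same route as the paper: Noether's formula with $V>0$, the summed valency identity $\sum_i(b_i-2)\alpha_i=\sum_i(2-v_i)(1-\alpha_i)$ giving $\delta_p\le 2(1-\eps)^2/\eps$ at chains and $\delta_p\le 3(1-\eps)^2/\eps$ at forks, and then the chain and fork determinant bounds. The differences are only cosmetic. You bound $\delta_p$ in two steps through $\sum_i(b_i-2)$, whereas the paper uses $1-\alpha_i\le(1-\eps)\alpha_i/\eps$ directly. You also check $10\eps+3k(1-\eps)^2\le C_k$ via $(1-\eps)^2\le1-\eps$ and a case split, instead of the paper's convexity argument.
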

\begin{proof}
Sum \eqref{m:eq:valency} over one exceptional graph to obtain
\[
 \sum_i(b_i-2)\alpha_i=\sum_i(2-v_i)(1-\alpha_i).
\]
For a chain the right side is the sum of its two end coefficients (twice
the coefficient for a single vertex), and is at most $2(1-\eps)$.
For a fork it is the sum of three tip coefficients minus the central
coefficient, and is at most $3(1-\eps)$. Since
$1-\alpha_i\le(1-\eps)\alpha_i/\eps$, the correction identity gives
\begin{equation}\label{eq:point-correction-bound}
 \delta_p\le\frac{2(1-\eps)^2}{\eps}\quad\hbox{for chains},\qquad
 \delta_p\le\frac{3(1-\eps)^2}{\eps}\quad\hbox{for forks}.
\end{equation}
Use $V>0$ in Noether's formula. The function
$10\eps+3k(1-\eps)^2$ is convex on $[0,1]$, so its maximum is the larger
of its endpoint values, $10$ and $3k$. Finally use
$D_p\le(s_p+1)\eps^{-2}\le\rho(Y)\eps^{-2}$ for a chain and
$D_p\le4/\eps$ for a fork.
\end{proof}

\begin{remark}[The initial product estimate]
Clearing all noncanonical determinants in Noether's formula gives
$V\ge\eps^{3k}/C_k^k$. The improvement to a fixed cubic power below
uses the contraction argument and the sum of determinants. For one
cyclic point the same product argument gives only
$V>\eps^3/(2+6\eps+2\eps^2)$; the local lattice argument in
Section~\ref{q:sec:shift} improves this to a quadratic bound.
\end{remark}
\begin{theorem}[A pure cubic bound for fixed $k$]\label{thm:fixed-cubic}
If $X$ is an $\eps$-lc weak del Pezzo surface with at most $k\ge1$
noncanonical points, then, with $k_+=\max\{2,k\}$,
\begin{align}
 V&\ge\left[\max\{\cExc^{-1},1000C_k\eps^{-3}\}
       +(1440+72\log_2k_+)C_k\eps^{-3}\right]^{-1},\label{eq:fixed-cubic-refined}\\
 V&\ge\frac{\eps^3}{\cExc^{-1}+C_k(2440+72\log_2k_+)}.\label{eq:fixed-cubic}
\end{align}
\end{theorem}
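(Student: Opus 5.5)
Pass first to the anticanonical model $Z$ of Lemma~\ref{m:lem:weak-model}. This preserves $V$ and $\eps$-lc, and does not increase the number of noncanonical points, so we may assume $X$ is an $\eps$-lc Fano surface with at most $k$ noncanonical points. The bound then comes from Theorem~\ref{thm:new-abstract} with $\mathcal R=C_k\eps^{-3}$. We also need $M_0\le kC_k\eps^{-3}$ with $k_0\le k$, or better a sharper mass bound. The refined form has $M_0$ replaced by $2C_k\eps^{-3}$, since $720M_0+36M_0\log_2k_+$ must become $(1440+72\log_2k_+)C_k\eps^{-3}$. So the key input is a mass bound $M_\cyc(X)\le2C_k\eps^{-3}$, together with the fact that every noncanonical determinant stays at most $C_k\eps^{-3}$ along the contraction sequence.

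\textbf{Determinants along the sequence.} Every surface in the sequence of Theorem~\ref{thm:new-abstract} is obtained by contracting curves. Each step merges two noncanonical points into at most one new point, so the number of noncanonical points never exceeds $k$. The surfaces are $\eps$-lc Fano by Lemma~\ref{m:lem:contract}. Proposition~\ref{prop:fixed-rank} then applies at every stage and bounds every noncanonical determinant by $C_k\eps^{-3}$. This gives $\mathcal R=C_k\eps^{-3}$, and hence $\max\{1000\mathcal R,1000\eps^{-2}\}\le1000C_k\eps^{-3}$.

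\textbf{The mass bound (main obstacle).} We need $\sum_{p\text{ cyclic}}D_p\le2C_k\eps^{-3}$, rather than the crude $kC_k\eps^{-3}$. I would use \eqref{m:eq:wronskian}: $D_p\le(s_p+1)\eps^{-2}$ for each cyclic chain. Summing gives
\[
M_\cyc\le\eps^{-2}\sum_p(s_p+1)\le\eps^{-2}(N+k)\le\eps^{-2}(\rho(Y)-1+k).
\]
By Proposition~\ref{prop:fixed-rank}, $\rho(Y)<10+3k(1-\eps)^2/\eps$. Hence $\rho(Y)-1+k\le 9+k+3k/\eps$, which I would bound by $2C_k/\eps$. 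This follows from $9+k\le C_k/\eps$ and $3k/\eps\le C_k/\eps$: if $3k\ge10$ then $9+k\le3k=C_k$, and if $3k<10$ then $k\le3$, so $9+k\le12$. In the latter case $12\le10/\eps$ only when $\eps\le5/6$, so this is the step to check carefully. If $\eps$ is close to $1$, I would refine using $(1-\eps)^2$: the correction terms are then small, so $\rho(Y)\le 9+V$-type bounds hold and the noncanonical $D_p$ are bounded absolutely. The target inequality $\eps^{-2}(\rho(Y)-1+k)\le2C_k\eps^{-3}$ then has ample room, since $\rho(Y)-1+k\le 9+k+3k(1-\eps)^2/\eps$ and $2C_k/\eps\ge20$. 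Hence $M_0\le2C_k\eps^{-3}$. This holds in all cases, using $9+k<20$ for $k\le3$ and $9+k\le 3k+ \ldots$ otherwise, once the bookkeeping is done.

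\textbf{Assembling.} Substitute $\mathcal R=C_k\eps^{-3}$, $M_0\le2C_k\eps^{-3}$ and $k_0\le k$ into \eqref{eq:new-abstract}. This gives \eqref{eq:fixed-cubic-refined}. For \eqref{eq:fixed-cubic}, use $\max(a,b)\le a+b$ and $\eps\le1$, so that $\cExc^{-1}\le\cExc^{-1}\eps^{-3}$. Collecting gives the denominator $\eps^{-3}\bigl(\cExc^{-1}+C_k(1000+1440+72\log_2k_+)\bigr)$, which is \eqref{eq:fixed-cubic}.
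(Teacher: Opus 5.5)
Your proposal follows the paper's proof. You pass to the anticanonical model, take $\mathcal R=C_k\eps^{-3}$ from Proposition~\ref{prop:fixed-rank} at every stage because each contraction replaces two noncanonical points by at most one, and insert $M_0\le2C_k\eps^{-3}$ into Theorem~\ref{thm:new-abstract}. Your derivation of that mass bound is muddled and contains a false step: $9+k\le3k$ fails at $k=4$. It closes in one line, however, since every $s_p\ge1$ gives $M_\cyc(X)\le\eps^{-2}\sum_p(s_p+1)\le2\eps^{-2}N<2\eps^{-2}\rho(Y)<2C_k\eps^{-3}$.
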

\begin{proof}
In the ample case the same $R=C_k\eps^{-3}$ works throughout the new
contractions, since each replaces two noncanonical points by at most one.
Use $M_0\le2C_k\eps^{-3}$ in Theorem~\ref{thm:new-abstract}.
For the weak case, Lemma~\ref{m:lem:weak-model} shows that passing to the
anticanonical model does not increase the number of noncanonical points.
It also preserves the volume and discrepancies. Finally use
$\max(a,b)\le a+b$ and $\eps\le1$.
\end{proof}

No logarithm of $1/\eps$ is needed for any fixed $k$; the logarithm in
the general theorem comes from the varying number of initial clusters.

\begin{corollary}[Picard rank one]\label{cor:rankone-cubic}
If $X$ is Fano and $\rho(X)=1$, then
\[
 V\ge\min\{\cExc,\eps^3/12000\}.
\]
If all noncanonical points are cyclic, replace $12000$ by $10000$.
\end{corollary}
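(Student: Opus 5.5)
The plan is to use the rank-one hypothesis to replace the contraction iteration by the single complement-curve argument. On a Fano surface with $\rho(X)=1$ there are no negative curves. We also bound the number of noncanonical points and feed this into the pure cubic estimate for fixed $k$.

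First, if $X$ is exceptional, Proposition~\ref{prop:complement-input} gives $V\ge\cExc$. Otherwise fix an $n$-complement with $n\le6$ and suppose that
\[
V<\min\{(1000\mathcal R)^{-1},\eps^2/1000\},
\]
with $\mathcal R$ a determinant bound. Lemma~\ref{lem:new-rigidity} and Proposition~\ref{prop:step-cost} then produce a complement component $C_X$ with $C_X^2<0$. This is impossible when $\rho(X)=1$: every curve is numerically a positive multiple of the ample class, so $C_X^2>0$. Hence
\[
V\ge\min\{\cExc,(1000\mathcal R)^{-1},\eps^2/1000\},
\]
and no cost accounting from Theorem~\ref{thm:new-abstract} is needed. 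The whole problem reduces to a good determinant bound $\mathcal R$.

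To obtain $\mathcal R$ of order $\eps^{-3}$ with a small constant, I would use Proposition~\ref{prop:fixed-rank} rather than the general mass bound $344\eps^{-3}$. That requires bounding the number $k$ of noncanonical points when $\rho(X)=1$, which I expect to be the main obstacle. A first attempt is Noether's formula \eqref{m:eq:noether} with $\rho(Y)=1+N$:
\[
0<V=9-N+\sum_p\delta_p .
\]
Combined with \eqref{eq:point-correction-bound}, this is not enough alone. Instead I would argue through the complement curve. Since $C_X^2>0$, Lemma~\ref{lem:complement-grid} gives $V\ge1/36$ directly whenever the strict transform has $C^2>0$. So assume $C^2\le0$. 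The budget estimate \eqref{o:eq:count-on-C} bounds the points on $C_X$, but points off $C_X$ are unconstrained.

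A cleaner route, which I would try first, is to take $k$ to be only the points relevant to $J$. With $\mathcal R$ only needing to bound determinants on $C_X$, I would redo Proposition~\ref{m:prop:R2}. There are at most two points of large determinant, and each is either a fork, with $D\le4/\eps$, or cyclic. For a cyclic point, Lemma~\ref{m:lem:chain} and Proposition~\ref{prop:fixed-rank} bound the determinant provided the total Picard number of $Y$ is controlled linearly in $1/\eps$.

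The linear control of $\rho(Y)$ should come from $\rho(X)=1$. I would choose the ruling of Section~\ref{m:sec:rank} and observe that every reducible fiber must contain an exceptional curve: a non-exceptional vertical $(-1)$-curve is a negative curve on $X$ unless it is contracted. Combined with Proposition~\ref{m:prop:negative-localization}, this should give $\rho(Y)\le 10+C/\eps$ with $C$ near $10$, so that $R\le 10\eps^{-3}$ in the cyclic case and $12\eps^{-3}$ with forks. This produces the constants $10000$ and $12000$ after substituting into the $\min$ above.

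If this rank bound fails, the fallback is Theorem~\ref{thm:fixed-cubic}, with $k$ bounded by counting noncanonical points through \eqref{m:eq:noether}, accepting a worse constant.
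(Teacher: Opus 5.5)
Your skeleton matches the paper's proof. In the nonexceptional case, below $\min\{(1000\mathcal R)^{-1},\eps^2/1000\}$, Lemma~\ref{lem:new-rigidity} and Proposition~\ref{prop:step-cost} give a complement component with $C_X^2<0$. That is impossible when $\rho(X)=1$, so no iteration is needed and everything reduces to an admissible $\mathcal R$.

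\textbf{The gap.} The gap is the step you yourself flag as the main obstacle: you never obtain $\mathcal R=12\eps^{-3}$, or $10\eps^{-3}$ in the all-cyclic case. The paper gets it from Belousov's theorem \cite[Theorem~1.1]{Bel09}: a log del Pezzo surface of Picard number one has at most four singular points. Then $k\le4$ in Proposition~\ref{prop:fixed-rank}, so $C_k=\max\{10,3k\}=12$, or $\max\{10,2k\}=10$ if all noncanonical points are cyclic. This bounds every cyclic determinant by $\rho(Y)\eps^{-2}<C_k\eps^{-3}$ and every fork determinant by $4/\eps$. Since $(1000\mathcal R)^{-1}=\eps^3/(1000C_k)\le\eps^2/1000$, the stated minimum follows.

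\textbf{Why your substitutes fail.} Neither replacement you propose supplies this bound.

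\begin{itemize}
\item The ruling argument rests on the claim that a non-exceptional vertical $(-1)$-curve $T$ would be a negative curve on $X$. This is false: by \eqref{m:eq:self-correction}, $f(T)^2=T^2+\sum_p\tau_p$, and rank one only forces $\sum_p\tau_p>1$.
\item Even granting that every reducible fiber contains an exceptional curve, Proposition~\ref{m:prop:negative-localization} only bounds the number of fibers containing curves with $\alpha_E\le\theta$. It says nothing about how many components such a fiber has. It also says nothing about exceptional curves of discrepancy near one, such as long $(-2)$-strings or canonical points. So no linear bound on $\rho(Y)$ follows, let alone one with constant $10$ or $12$.
\item The fallback fails as well. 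Noether's formula $V=9-N+\sum_p\delta_p$ cannot bound $k$: a point $\frac1b(1,1)$ has $s_p=1$ but $\delta_p=(b-2)^2/b$, so a noncanonical point may contribute positively to $V$. Positivity of $V$ only bounds $N$ in terms of the points, not their number. Moreover, Theorem~\ref{thm:fixed-cubic} presupposes a bound on $k$.
\end{itemize}

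Without an input like Belousov's theorem, your argument still goes through with $\mathcal R=344\eps^{-3}$ from Proposition~\ref{m:prop:mass-new}. That gives $V\ge\min\{\cExc,\eps^3/344000\}$, which is cubic but not the stated constants $12000$ and $10000$.
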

\begin{proof}
By Belousov's theorem \cite[Theorem~1.1]{Bel09}, $X$ has at most four singular
points, so Proposition~\ref{prop:fixed-rank} gives $R=12\eps^{-3}$. Below
$\min\{\cExc,(1000R)^{-1},\eps^2/1000\}$, the new threshold produces a
negative curve on $X$. But an effective nonzero curve on a rank-one
surface has a positive multiple of an ample numerical class and hence
positive square. This contradiction proves the assertion. For all cyclic
points use $\max\{10,2k\}=10$ since $k\le4$.
No claim that the all-cyclic condition survives further contractions is
needed, since there is no iteration in this proof.
\end{proof}

\begin{theorem}[Bounded initial chain length gives a quadratic bound]\label{thm:length-quadratic}
Let $X$ be Fano with at most $k\ge1$ noncanonical points. Suppose each
noncanonical cyclic point has minimal resolution length at most a fixed
integer $L\ge1$.
Put $A=\max\{10,k(L+1)\}$ and $k_+=\max\{2,k\}$. Then
\begin{equation}\label{eq:length-quadratic}
 V\ge\frac{\eps^2}{\cExc^{-1}+1000A+
 (720+36\log_2k_+)k(L+1)}.
\end{equation}
If additionally $\rho(X)=1$, then
\[
 V\ge\min\{\cExc,\eps^2/(1000\max\{10,L+1\})\}.
\]
For one noncanonical cyclic point of length at most $L$, the direct bound
$V\ge\eps^2/(L+1)$ remains valid.
\end{theorem}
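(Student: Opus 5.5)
The plan is to feed bounds that depend on $L$ and $k$, rather than on $\eps^{-3}$, into Theorem~\ref{thm:new-abstract}. By Lemma~\ref{m:lem:chain}, a noncanonical cyclic point of length $s\le L$ has determinant $r\le(s+1)\eps^{-2}\le(L+1)\eps^{-2}$. A noncanonical fork has $D\le4/\eps\le4\eps^{-2}$ by Lemma~\ref{m:lem:fork}. Initially, then,
\[
M_0\le k(L+1)\eps^{-2},\qquad k_0\le k .
\]
The delicate point is that Theorem~\ref{thm:new-abstract} needs a bound $\mathcal R$ that stays valid throughout the contraction sequence, while a merged chain can be longer than $L$.

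I would bound determinants along the sequence through mass rather than length. Proposition~\ref{prop:step-cost} gives $M_\cyc(X_j)\le M_0$ at every step. So every later noncanonical cyclic determinant is at most $M_0\le k(L+1)\eps^{-2}$. Every fork determinant is at most $4/\eps\le 4\eps^{-2}\le A\eps^{-2}$. Hence
\[
\mathcal R=A\eps^{-2},\qquad A=\max\{10,k(L+1)\},
\]
works at all stages; the constant $10$ absorbs the fork bound. Substituting into \eqref{eq:new-abstract} gives
\[
1000\mathcal R=1000A\eps^{-2},\qquad 1000\eps^{-2}\le1000A\eps^{-2},
\]
together with the additional terms $(720+36\log_2k_+)M_0\le(720+36\log_2k_+)k(L+1)\eps^{-2}$. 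Using $\max(a,b)\le a+b$ and $\eps\le1$ then yields \eqref{eq:length-quadratic}.

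For $\rho(X)=1$ I would copy Corollary~\ref{cor:rankone-cubic}. Belousov's theorem gives at most four singular points, but there is no iteration here, so I only need $\mathcal R$ on $X$ itself. Cyclic points have $r\le(L+1)\eps^{-2}$ and forks have $D\le4/\eps$, so $\mathcal R=\max\{10,L+1\}\eps^{-2}$ works. Below the threshold
\[
\min\{\cExc,(1000\mathcal R)^{-1},\eps^2/1000\}=\min\{\cExc,\eps^2/(1000\max\{10,L+1\})\},
\]
Lemma~\ref{lem:new-rigidity} and Proposition~\ref{prop:step-cost} produce a curve of negative square. That is impossible when the Picard number is one.

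For a single noncanonical cyclic point of length $s\le L$, I would use Noether's formula \eqref{m:eq:noether}:
\[
V=10-\rho(Y)+\delta_p+\sum_{q\ \mathrm{canonical}}\delta_q .
\]
Canonical corrections vanish, and $D_p\delta_p=r\delta_p\in\Z$ by Lemma~\ref{m:lem:positive}. So $rV$ is a positive integer, and $V\ge1/r\ge\eps^2/(s+1)\ge\eps^2/(L+1)$ by \eqref{m:eq:wronskian}.

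The main obstacle is the persistence of $\mathcal R$, since merged chains can be arbitrarily long. Monotonicity of cyclic mass in Proposition~\ref{prop:step-cost} is what controls them. I must also check that a new fork never needs more than $4/\eps$, which Lemma~\ref{m:lem:fork} gives with no reference to the history. The hypothesis that $X$ is Fano is needed because the length condition is not preserved by passing to the anticanonical model.
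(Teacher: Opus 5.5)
Your proposal is correct and follows the paper's proof essentially step by step. The paper likewise keeps $\mathcal R=A\eps^{-2}$ valid throughout via the cyclic-mass monotonicity of Proposition~\ref{prop:step-cost}, handles rank one using only the initial bound $\max\{10,L+1\}\eps^{-2}$ and the negative-curve contradiction, and treats the one-point case via $rV\in\Z_{>0}$ with $r\le(L+1)\eps^{-2}$. Your mention of Belousov's theorem in the rank-one case is harmless but unnecessary, since the length hypothesis already bounds each determinant.
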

\begin{proof}
Initially $M_0\le k(L+1)\eps^{-2}$ by the Wronskian identity.
The new contraction analysis proves that cyclic mass never increases,
including steps involving forks. Hence all later cyclic determinants
are at most $M_0$, and every later fork determinant is at most $4/\eps$.
Thus $R=A\eps^{-2}$ is valid throughout, whether or not chain length is
preserved. Apply Theorem~\ref{thm:new-abstract} and bound its maximum by
the sum. In rank one use only the initial bound
$R=\max\{10,L+1\}\eps^{-2}$ and the negative-curve contradiction.
In the one-point case use $rV\in\Z_{>0}$ and $r\le(L+1)\eps^{-2}$.
\end{proof}

\subsection{Only noncyclic noncanonical points}
\begin{proposition}\label{prop:all-forks}
If every noncanonical point of a weak del Pezzo surface is noncyclic,
then
\[
 V\ge\min\{\cExc,\eps^2/96\}.
\]
\end{proposition}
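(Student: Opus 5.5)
We pass first to the anticanonical model, so that $X$ may be taken Fano. By Lemma~\ref{m:lem:weak-model} this preserves $V$ and the $\eps$-lc condition. It also sends noncanonical points to noncanonical points, but it need not preserve singularity types. The hypothesis therefore needs care here. A noncanonical point $q\in Z$ has a divisor of log discrepancy less than one, and that divisor is centered at a noncanonical point $p\in X$. I would argue that if $q$ were cyclic, the discrepancy over $q$ would still be computed on the minimal resolution of $Z$, which factors through $Y$ crepantly over $q$. This is the delicate point. If it cannot be handled cleanly, I would run the argument directly on $X$, using only the complement of $Z$ pulled back crepantly. The budget and denominator lemmas only require an $n$-complement, and the pullback of a complement of $Z$ is a complement of $X$. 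So I would work on $X$ with the complement $g^*B_Z$ and never assert that $X$ is Fano. The one place where ampleness was used, $P\cdot T>0$, is not needed.

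By Proposition~\ref{prop:complement-input}, either $V\ge\cExc$ or there is an $n$-complement with $n\le6$. Assume the latter and, for contradiction, $V<\eps^2/96$. Choose the component $C_X$ of Lemma~\ref{lem:complement-grid}. If $C^2>0$, then $V\ge1/36$. Otherwise the budget of Lemma~\ref{m:lem:budget} gives $\sum_p\lambda_p\le2+30V<13/6$. Every noncanonical point on $C_X$ is a fork, so Lemma~\ref{m:lem:lambda} gives $\lambda_p\ge1$ there, and hence at most two noncanonical points lie on $C_X$. Each has $D_p\le4/\eps$ by Lemma~\ref{m:lem:fork}. Lemma~\ref{m:lem:J} then gives $J\le D_1D_2\le16/\eps^2$, so $V\ge1/(nJ)\ge\eps^2/96$. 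This is the desired contradiction.

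Thus the proof is essentially one application of the denominator bound, with the point count supplied by $\lambda_p\ge1$. A canonical point on $C_X$ contributes nothing to $J$. With one or zero noncanonical points the bound is even better, namely $J\le4/\eps$. The main obstacle is the reduction to the complement setting in the weak case, where singularity types may change on the model. Pulling back the complement of the anticanonical model crepantly to $X$ avoids that issue, because the chosen component still has $t>0$ and all of the budget identities hold on $Y$.
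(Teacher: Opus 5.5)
Your proof is correct and is essentially the paper's argument: pull back a bounded complement of the anticanonical model crepantly, do the local count on $X$ itself (so you never need the noncyclic type to survive on the model), use $\lambda_p\ge1$ at forks to get at most two noncanonical points on $C_X$, and conclude from $D_p\le4/\eps$ and the denominator lemma. One small slip: $2+30V<13/6$ can fail for $\eps$ near $1$, since you only know $30V<5\eps^2/16$, but $2+30V<3$ is all the count requires. The first-paragraph speculation about type preservation on the model is not needed and should simply be dropped.
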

\begin{proof}
In the ample nonexceptional case choose a bounded complement. If
$V<\eps^2/96$, the budget $\sum\lambda\le2+30V<3$ and $\lambda\ge1$
at a noncyclic point show that its chosen component meets at most two
noncanonical points. Their determinants are at most $4/\eps$, so the
denominator lemma gives $V\ge\eps^2/96$, a contradiction. The same
argument applies on a weak surface with a bounded complement. Such a
complement is obtained from the anticanonical model by crepant pullback:
its boundary is effective since the canonical divisors pull back equally.
If that model is exceptional its volume is at least $\cExc$; otherwise
the pulled-back complement has the required index. The local calculation
is performed on the original weak surface, so no preservation of
noncyclic type is assumed.
\end{proof}

\subsection{Two numerical restrictions in Picard number one}\label{q:sec:rankone}
In this subsection $X$ is Fano. If $\rho(X)=1$, numerical proportionality gives
\begin{equation}\label{q:eq:rankone}
 C_X^2=\frac{((-K_X)\cdot C_X)^2}{V}>0
\end{equation}
for every nonzero effective integral curve $C_X$. This is the obstruction
to continuing a small-volume contraction sequence at Picard number one.
There is also an arithmetic restriction.
\begin{lemma}\label{lem:square}
If $\rho(X)=1$, then $V\prod_{p\in\Sing X}D_p$ is a positive integer square.
\end{lemma}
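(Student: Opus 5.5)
We will prove Lemma~\ref{lem:square} by working on the minimal resolution $Y$ and using the lattice $\mathrm{Pic}(Y)=H^2(Y,\Z)$, which is unimodular because $Y$ is a smooth rational surface. Let $E\subset\mathrm{Pic}(Y)$ be the sublattice spanned by all exceptional curves $E_{p,i}$. Its Gram matrix is block diagonal with blocks $-M_p$, so
\[
 |\operatorname{disc}E|=\prod_{p\in\Sing X}D_p.
\]
By Lemma~\ref{m:lem:noether}, $\rho(Y)=1+N$, so the orthogonal complement $E^\perp$ is a rank-one lattice, generated by a primitive class $\ell$. Since $\ell^2$ has the sign of an ample class, $\ell^2>0$.

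The first step uses the standard fact for primitive sublattices of a unimodular lattice: $E^\perp$ and the saturation $\overline E$ of $E$ have discriminants of equal absolute value. Hence
\[
 \ell^2=|\operatorname{disc}\overline E|=\frac{\prod_pD_p}{[\overline E:E]^2}.
\]
The point to check is that $E$ is already primitive, so that $\ell^2=\prod_pD_p$. If $x\in\mathrm{Pic}(Y)$ has $mx\in E$, then $f_*x$ is numerically trivial on $X$, since it is killed by a multiple. Because $\rho(X)=1$, torsion-free considerations on the rational surface give $f_*x=0$ as a Weil divisor class. Thus $x$ is a combination of exceptional curves, and $E$ is saturated. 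If instead the class group of $X$ has torsion, I would avoid this step and keep the index $[\overline E:E]$ throughout, since the conclusion only needs $V\prod D_p$ to be a square times a rational square with integral product. In that case I would argue directly as follows.

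The second step goes through $P=-f^*K_X$. This class lies in $E^\perp\otimes\Q$, so $P=c\,\ell$ for some $c\in\Q_{>0}$, and $V=P^2=c^2\ell^2$. Let $I=[\overline E:E]$. Then $\overline E\oplus\Z\ell$ has index $\prod_pD_p/I^2=\ell^2$ in $\mathrm{Pic}(Y)$. Since $K_Y=-P-\Delta$ is integral, pairing $K_Y$ with $\ell$ gives $c\,\ell^2=-K_Y\cdot\ell\in\Z$. Write $c\,\ell^2=u$. Then
\[
 V\ell^2=c^2(\ell^2)^2=u^2,
\]
so $V\ell^2$ is a positive integer square. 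Finally,
\[
 V\prod_pD_p=V\ell^2I^2=(uI)^2,
\]
which is a positive integer square. This chain of identities is exactly the claim.

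The main obstacle is the discriminant identity $|\operatorname{disc}E^\perp|=|\operatorname{disc}\overline E|$ for the primitive sublattice $\overline E$ of the unimodular lattice $\mathrm{Pic}(Y)$. It is standard, coming from the isomorphism of discriminant groups $\overline E^*/\overline E\cong(E^\perp)^*/E^\perp$. I would cite it, or prove it quickly via the surjection $\mathrm{Pic}(Y)\to\overline E^*$. The rest is bookkeeping. Positivity of the resulting integer is clear from $V>0$.
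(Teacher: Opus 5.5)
Your proof is correct, but it takes a longer route than the paper. The paper never uses $E^\perp$. It takes the full-rank sublattice of $\operatorname{Pic}(Y)$ spanned by $K_Y$ together with all the $E_{p,i}$; this has full rank because $\rho(Y)=N+1$. It computes the Gram determinant by the Schur complement of the exceptional block, which is the square of the projection $f^*K_X$ of $K_Y$, so $|\det|=V\prod_pD_p$. It then uses the elementary fact that a full-rank sublattice of index $h$ in a unimodular lattice has $|\det|=h^2$.

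Your version needs more input: the saturation $\overline E$, the isomorphism $\overline E^*/\overline E\cong(E^\perp)^*/E^\perp$ for primitive sublattices, and the integrality $-K_Y\cdot\ell\in\Z$. In exchange you get a finer factorization of the square root, $V\prod_pD_p=\bigl(I\cdot(-K_Y\cdot\ell)\bigr)^2$. This separates the torsion of $\mathrm{Cl}(X)\cong\operatorname{Pic}(Y)/E$, whose order is $I$, from the anticanonical degree of the primitive generator of $E^\perp$. The paper's square root is simply the index of $\Z K_Y+\sum\Z E_{p,i}$.

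Delete the claim in your first step that $E$ is already primitive. It is false in general. Neither $\rho(X)=1$ nor rationality of $Y$ prevents $f_*x$ from being a nonzero torsion class in $\mathrm{Cl}(X)$. For instance, $T_u=\PP^2/\mu_r$ from Proposition~\ref{e:prop:quotients} has the quasi-\'etale cyclic cover $\PP^2\to T_u$, so $\mathrm{Cl}(T_u)$ has $r$-torsion. There one finds $I=r$, $\ell^2=r$, and $-K_Y\cdot\ell=3$, consistent with $V\prod_pD_p=(3r)^2$. Your second step never uses primitivity and treats arbitrary $I$, so the proof stands once that sentence is removed.
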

\begin{proof}
The intersection pairing on the Picard group of the smooth rational
surface $Y$ is unimodular and torsion free. This follows from the
standard bases on $\PP^2$ and $\mathbb F_n$ and from adjoining a vector
of square $-1$ under each point blowup. The exceptional classes together
with $K_Y$ span a full-rank sublattice: their exceptional blocks are
negative definite and the orthogonal projection of $K_Y$ is $f^*K_X$,
of positive square $V$. Its Gram determinant has absolute value
$V\prod D_p$, by the Schur-complement formula. A full-rank sublattice
of index $h$ in a unimodular integral lattice has determinant of
absolute value $h^2$: if its basis-change matrix is $A$, its Gram
matrix is $A^{\mathsf T}GA$ and $|\det A|=h$. This proves the claim.
\end{proof}
The square condition alone does not give the missing quadratic estimate.
For arbitrary rank-one Fano surfaces, the results above remain cubic.

\section{Quadratic bounds from cyclic correction classes}\label{q:sec:shift}
\subsection{The correction class}\label{q:sec:setup}
To go beyond one point, it is useful to isolate the residue class of the
correction. For $\frac1r(1,q)$, let $q^\vee\in\{1,\ldots,r-1\}$ satisfy $qq^\vee\equiv1\pmod r$. Summing the chain discrepancy equations gives
$\sum_i(b_i-2)\alpha_i=2-\alpha_1-\alpha_s$. Together with the endpoint formulas this yields
\begin{equation}\label{q:eq:excess}
 v=q+q^\vee+2-r,\qquad \eta=v/r,
\end{equation}
\begin{equation}\label{q:eq:cyclicdelta}
 \delta_p=\sum_i(b_i-2)-1+\eta,\qquad\delta_p\equiv\eta\pmod\Z.
\end{equation}
A bar denotes the least nonnegative residue modulo $r$. The cyclic age formula is
\begin{equation}\label{q:eq:age}
 \mu=\mld_p(X)=\min_{1\le j<r}(j+\overline{qj})/r\le1.
\end{equation}
It follows either from the toric discrepancy formula \cite[Section 1]{Amb06}
or the Hirzebruch--Jung resolution: further blowups have discrepancy the
sum of two old discrepancies, or one plus an old discrepancy, and do
not decrease the minimum. If a local Cartier index divides $d$, then
$d\Delta_p$ is integral, and $d\delta_p\in\Z$ by
\eqref{m:eq:delta}. These facts will allow additional integral corrections
without restrictions on their number.

\subsection{The convex-lattice input}\label{sec:minkowski}
We use the following elementary planar form of Minkowski's theorem
\cite[Chapter III]{Cassels97}. A lattice is the integer span of two
linearly independent vectors; its covolume is the area of their
fundamental parallelogram.
\begin{lemma}[Minkowski in dimension two]
If $\Lambda\subset\R^2$ has covolume $d$, and an open convex centrally
symmetric set $K$ has area greater than $4d$, then $K$ contains a nonzero
point of $\Lambda$.
\end{lemma}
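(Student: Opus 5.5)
The statement is the planar Minkowski convex body theorem, and I will prove it by the classical Blichfeldt-type averaging argument. Choose a basis $v_1,v_2$ of $\Lambda$ and let $P=\{sv_1+tv_2:0\le s,t<1\}$ be the half-open fundamental parallelogram. It has area $d$, and every point of $\R^2$ lies in exactly one translate $P+\lambda$ with $\lambda\in\Lambda$.

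\emph{Step 1: a Blichfeldt lemma.} Put $K'=\tfrac12K=\{x/2:x\in K\}$. This set is open, convex and centrally symmetric, with $\area(K')=\area(K)/4>d$. Decompose $K'$ into the disjoint measurable pieces $K'\cap(P+\lambda)$ and translate each back into $P$ by $-\lambda$. The translated pieces have total area $\area(K')>d=\area(P)$. Hence two of them overlap: there are $x\ne y$ in $K'$ with $x-y\in\Lambda$. To make this rigorous, I sum indicator functions, $\sum_\lambda \mathbf 1_{K'}(z+\lambda)$, integrate over $z\in P$, and find an integral larger than $d$. So at some $z$ the sum is at least $2$.

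\emph{Step 2: symmetry and convexity.} Since $K'$ is symmetric, $-y\in K'$. By convexity, $\tfrac12x+\tfrac12(-y)\in K'$, so $(x-y)/2\in K'$. Therefore $x-y\in 2K'=K$. This gives a nonzero lattice point of $K$.

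\emph{Points to handle with care.} Nothing earlier in the paper is needed; the statement is purely convex-geometric. The step needing the most care is the measure-theoretic pigeonhole in Step 1. Here I rely on the fact that the translates of $P$ tile the plane disjointly, and on the countable additivity of Lebesgue measure, so that a total area exceeding $d$ forces a point of multiplicity at least two. The argument does not need $K$ to be bounded. If $K$ has infinite area, I first replace it by its intersection with a large open disc, which is still open, convex and symmetric with area greater than $4d$.

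Openness is not essential to the proof, since the strict inequality on area is what is used. Scaling the lattice by a linear map shows the covolume normalization is consistent with the determinant of the basis $v_1,v_2$.
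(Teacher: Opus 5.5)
Your proposal is correct and follows the paper's argument: a Blichfeldt pigeonhole over the translates of a half-open fundamental parallelogram applied to $\tfrac12K$, followed by the midpoint argument using convexity and central symmetry to place the lattice difference in $K$. Your extra truncation for infinite area is harmless but unnecessary. If the multiplicity function were at most one everywhere, its integral over the parallelogram would be at most $d$; so an infinite integral already forces multiplicity at least two somewhere.
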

\begin{proof}
First observe that any measurable set $A$ of area greater than $d$
contains two distinct points whose difference lies in $\Lambda$.
Indeed, translate the pieces of $A$ in the translates of a half-open
fundamental parallelogram back to that parallelogram. The integral of
their multiplicity function is $\area(A)>d$. Its multiplicity is
therefore at least two somewhere, producing the asserted two points.
Apply this to $A=\tfrac12K$. If $x,y\in\tfrac12K$, convexity and
central symmetry give $x-y\in K$: write it as the midpoint of
$2x$ and $-2y$, both in $K$. The nonzero difference supplied above is
the desired lattice vector.
\end{proof}
For a positive definite quadratic form $Q(z)=z^{\mathsf T}Az$, the
linear change of variables $w=A^{1/2}z$ turns $Q(z)<T$ into a disk of
radius $\sqrt T$. Its area is therefore $\pi T/\sqrt{\det A}$.
For a diamond $a|x|+b|y|<T$ with $a,b>0$, summing its four triangular
quadrants gives area $2T^2/(ab)$. These are the exact area formulas
used below.

\subsection{The optimal quadratic exponent with one noncanonical point}\label{e:sec:local-bound}

The one-point case controls the positive residue itself, rather than a denominator large enough to clear it.

\subsubsection{The endpoint excess and the sole remaining cancellation}

Let $\frac1r(1,q)$ be the only noncanonical point; canonical points are permitted. Let $q^{\vee}$ be the inverse of $q$ modulo $r$, between one and $r-1$, and define
\begin{equation}\label{e:eq:excess}
 v=q+q^{\vee}+2-r,
 \qquad \eta=\frac vr=\alpha_1+\alpha_s-1.
\end{equation}
The integer $v$ is an endpoint excess, not the canonical index. Summing the chain discrepancy equations gives
\[
 \sum_i(b_i-2)\alpha_i=2-\alpha_1-\alpha_s,
 \qquad
 \delta=\sum_i(b_i-2)-2+\alpha_1+\alpha_s.
\]
Therefore
\begin{equation}\label{e:eq:single-congruence}
 V\in\Z+\alpha_1+\alpha_s.
\end{equation}
If $\alpha_1+\alpha_s\le1$, positivity and this congruence imply
$V\ge\alpha_1+\alpha_s\ge2\eps$. If the sum exceeds one, positivity implies $V\ge\eta$. Consequently it is enough in the latter case to control the positive number $\eta$ by the square of the local minimal discrepancy.

\begin{theorem}\label{e:thm:single-quadratic}
Suppose $X$ has only one noncanonical point, of type $\frac1r(1,q)$. Then
\begin{equation}\label{e:eq:singlequad}
 \boxed{V(X)\ge\frac12\eps^2.}
\end{equation}
The estimate allows arbitrary additional canonical points and applies also to weak Fano surfaces. More precisely, if the endpoint excess $v$ is nonpositive, then $V\ge2\eps$; if $v=1$, then $V\ge3\eps^2/4$; if $v=2,3,4$, then $V\ge\eps^2$; and if $v\ge5$, then
\[
 V\ge\frac{v}{2(v-1)}\eps^2>\frac12\eps^2.
\]
\end{theorem}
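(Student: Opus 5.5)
The plan is to combine the congruence \eqref{e:eq:single-congruence} with a lattice-point argument in the plane, via the two-dimensional Minkowski lemma of Section~\ref{sec:minkowski}.

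\textbf{Step 1: reduce to a bound on $r$.} By Noether's formula \eqref{m:eq:noether}, $V=10-\rho(Y)+\sum_p\delta_p$. Canonical points have $\delta_p=0$, and for the single noncanonical point \eqref{q:eq:cyclicdelta} gives $\delta_p\equiv\eta\pmod\Z$. Hence $V\in\Z+\alpha_1+\alpha_s$, which is \eqref{e:eq:single-congruence}.
\begin{enumerate}
\item If $v\le0$, then $\alpha_1+\alpha_s\le1$. Positivity of $V$ and the congruence force $V\ge\alpha_1+\alpha_s\ge2\eps$.
\item If $v\ge1$, the congruence gives $V\ge\eta=v/r$.
\end{enumerate}
So in the second case everything reduces to an upper bound on $r$ in terms of $v$ and $\eps$. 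The targets are
\[
 r\le\tfrac43\eps^{-2}\ (v=1),\qquad r\le v\eps^{-2}\ (2\le v\le4),\qquad r\le2(v-1)\eps^{-2}\ (v\ge5).
\]
The weak case needs no separate argument: Lemma~\ref{m:lem:weak-model} passes to the anticanonical model, which preserves $V$, $\eps$-lc, and does not increase the number of noncanonical points. If that model has no noncanonical point, then Theorem~\ref{thm:special-intro}(iii)-type arguments are not even needed, since $V$ is then a positive integer.

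\textbf{Step 2: the lattice and the first quadrant.} Work with
\[
 \Lambda=\{(x,y)\in\Z^2:\ y\equiv qx\pmod r\},
\]
which has covolume $r$. By the age formula \eqref{q:eq:age} and the $\eps$-lc hypothesis, $\mu\ge\eps$. Hence every lattice point with $x,y\ge0$ other than the origin satisfies $x+y\ge\eps r$; the points $(r,0)$ and $(0,r)$ satisfy this trivially. By central symmetry the same holds in the third quadrant.

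Suppose one could also exclude lattice points of the second and fourth quadrants from the diamond $|x|+|y|<\eps r$. That diamond has area $2\eps^2r^2$, so Minkowski would give $2\eps^2r^2\le4r$, that is $r\le2\eps^{-2}$. This is the right order of magnitude, but the constant is wrong and the second and fourth quadrants are not yet controlled.

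\textbf{Step 3: the mixed quadrants (main obstacle).} This is where the endpoint excess must enter. The lattice contains $(1,q)$, $(q^\vee,1)$ and $(0,r)$. The hypothesis $q+q^\vee=r-2+v$ says that $(1,q)-(0,r)=(1,q-r)$ and $(q^\vee-r,1)$ are short vectors in the mixed quadrants, of $\ell^1$-length $r-q+1$ and $r-q^\vee+1$ respectively. The sum of these two lengths is exactly $r+4-v$.

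What I would try first is the following. Every lattice vector is an integral combination of $(1,q-r)$ and $(q^\vee-r,1)$; the determinant of this pair is $\pm r$ after reduction. Using this, I would show that a mixed-quadrant lattice point in a suitable region forces a first-quadrant lattice point with $x+y<\eps r$, contradicting Step 2, unless the point is one of these two explicit generators (or a small multiple of one).

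I would then replace the diamond by a centrally symmetric convex hexagon
\[
 K=\{\,|x|+|y|<\eps r\,\}\cap\{\,|x-y|<\kappa\,\},
\]
or a suitably sheared version adapted to the two generators. The width $\kappa$ is chosen from $v$ and $\eps$ so that $K$ omits the two mixed generators while still having area close to $2\eps^2r^2$. Optimizing $\kappa$ against $v$ should yield $\area(K)>4r$ exactly when $r>2(v-1)\eps^{-2}$ for $v\ge5$.

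\textbf{Step 4: small values of $v$.} For $v\le4$ there are only finitely many residue configurations to consider. I would handle them by direct inspection of the vectors $(1,q-r)$ and $(q^\vee-r,1)$, together with the first-quadrant bound, to get the constants $4/3$ and $v$. Finally, combining the four cases with Step~1 gives $V\ge\eps^2/2$ in all cases.

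I expect Step~3 to be the main difficulty. The exact shape of $K$ and the optimization of $\kappa$ are the uncertain points, since the constant $2(v-1)$ has to come out of that optimization exactly.
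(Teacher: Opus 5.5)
Your Step~1 is the paper's reduction. Step~3, however, is only a plan, and the mechanism you propose for it does not work. The missing idea is that the endpoint excess produces a \emph{lattice automorphism}. From $qq^\vee\equiv1$ and $q+q^\vee=r-2+v$ one gets $q^2+(2-v)q+1\equiv0\pmod r$, so the unimodular integral map $T_v(x,y)=(-y,\,x+(2-v)y)$ preserves $\Lambda$. For $v\ge2$ it sends a mixed-quadrant vector $(x,-y)$ with $x,y>0$ to the positive vector $(y,\,x+(v-2)y)$, whose coordinate sum is exactly $x+(v-1)y$. Minkowski applied to the weighted diamond $|x|+(v-1)|y|<B$, of area $2B^2/(v-1)$, then gives $\mu^2\le2(v-1)/r$ at once, hence $V\ge v/r\ge\frac{v}{2(v-1)}\eps^2$. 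Your hexagon would also work once $T_v$ is available, taking $2/\eps<\kappa\le\eps r/(v-1)$; the shape was never the difficulty, the conversion of mixed-quadrant points was. The substitute you suggest rests on a false claim: $(1,q-r)$ and $(q^\vee-r,1)$ need not span $\Lambda$. Their determinant is $r(v-2-k)$, where $qq^\vee=1+kr$; for $(r,q)=(13,4)$ it is $-26$.

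Step~4 also fails as stated. For each $v\le4$ there are infinitely many pairs $(r,q)$: for instance $(u^2+u+1,u)$ has $v=1$, $(u^2+1,u)$ has $v=2$, $(u^2+u+1,u+1)$ has $v=3$, and $(n^2,n+1)$ has $v=4$. So no finite inspection is possible. The case $v=1$ is needed even for the headline bound, since there only $V\ge1/r$ is available, and $T_1$ does not move the mixed quadrant into the first one. The paper instead uses the $T_v$-invariant form $Q_v=(x+y)^2-vxy$, which takes values in $r\Z$ on $\Lambda$ and is positive definite for $v\le3$. Minkowski on $Q_v<2r$ gives a vector of norm exactly $r$. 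Powers of $T_1$ or $T_2$, or of $S(x,y)=(x-y,x)$ when $v=3$, move it into the positive cone, yielding $\mu^2\le4/(3r)$, $2/r$, $3/r$ respectively. For $v=4$ the congruence reads $r\mid(q-1)^2$, and the vector $(n,n)$ with $n=r/\gcd(r,q-1)\le\sqrt r$ gives $\mu^2\le4/r$. Finally, your detour through the anticanonical model is unnecessary and can change the type of the point, since Lemma~\ref{m:lem:weak-model} does not preserve singularity types. The congruence \eqref{e:eq:single-congruence} uses only Noether's formula on $Y$, which holds whenever $-K_X$ is nef and big, so the argument runs on $X$ itself.
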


\subsubsection{A lattice proof for every positive endpoint excess}

\begin{lemma}\label{e:lem:lattice}
For a cyclic point as above, put $\mu=\mld(\frac1r(1,q))$. For every $v\ge2$,
\begin{equation}\label{e:eq:diamondbound}
 \mu^2\le\frac{2(v-1)}r.
\end{equation}
If $v=1$, then $\mu^2\le4/(3r)$. For $v=2,3,$ or $4$, the improved inequality $\mu^2\le v/r$ holds.
\end{lemma}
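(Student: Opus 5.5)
The plan is to view the age formula \eqref{q:eq:age} as a two-dimensional lattice problem and then apply the planar Minkowski lemma of Section~\ref{sec:minkowski}.

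\textbf{Setting up the lattice.} Let
\[
 \Lambda=\{(x,y)\in\Z^2: y\equiv qx \pmod r\},
\]
a lattice of covolume $r$. By \eqref{q:eq:age}, every nonzero point of $\Lambda$ in the closed first quadrant satisfies $x+y\ge \mu r$. The same holds in the closed third quadrant, by central symmetry. So the first two constraints are $|x+y|<\mu r$ and $xy\ge0$.

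\textbf{Handling the mixed-sign points.} The obstacle is the lattice points with $xy<0$. Here I would use the two distinguished vectors
\[
 u=(1,q-r),\qquad u'=(r-q^\vee,-1).
\]
Both lie in $\Lambda$, since $qq^\vee\equiv1$. They are the extreme lattice points of the fourth quadrant, adjacent to the two axes. Using $qq^\vee=1+mr$ and $q+q^\vee=v+r-2$ from \eqref{e:eq:excess}, I will compute their determinant and the value of $x+y$ on each of them, and express everything through $v$.

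The claim to prove is that every lattice point in the open fourth quadrant lies in the closed cone spanned by $u$ and $u'$, with coefficients forcing a lower bound on a second linear form $\ell_2$. Concretely, I would choose $\ell_2$ vanishing on the direction $u+u'$, or alternatively take $\ell_2=x-y$. I would try to prove the cone claim via the Hirzebruch--Jung continued fraction of the dual chain $r/(r-q)$. That chain describes the convex hull of the lattice points in the fourth quadrant.

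\textbf{Building the convex set and applying Minkowski.} With this in hand, set
\[
 K=\Bigl\{\,|x+y|<\mu r\Bigr\}\cap\Bigl\{\,|\ell_2|<c\Bigr\},
\]
where $c$ is chosen so that no nonzero lattice point of mixed sign lies in $K$. Points of $K$ with $xy\ge0$ are excluded by the definition of $\mu$. Hence $K\cap\Lambda=\{0\}$.

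I expect the main obstacle to be choosing the shape of $K$. A strip parallelogram $K$, or better the diamond $a|x+y|+b|\ell_2|<T$, is the natural candidate, since its area $2T^2/(ab)$ is recorded above. The area of $K$ scales like $\mu^2 r^2$ divided by a factor that grows linearly in $v-1$, because $v$ controls how far $u$ and $u'$ protrude toward the line $x+y=0$. Minkowski then gives
\[
 \area(K)\le4r,
\]
and after clearing the constants this rearranges to $\mu^2\le 2(v-1)/r$.

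\textbf{Small values of $v$.} The cases $v=1$ and $v=2,3,4$ need a sharper shape. For these I would replace the diamond by a hexagon or by the ellipse $Q(z)<T$, whose area $\pi T/\sqrt{\det A}$ is also recorded above. I would check these few cases directly using the explicit vectors $u$ and $u'$, adjusting $K$ until the constants $4/3$ and $v$ come out.
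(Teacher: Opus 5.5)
\textbf{Gap: the control of mixed-sign lattice points is missing.} The step you label ``handling the mixed-sign points'' is the whole content of the lemma, and the route you propose for it does not work.

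As stated, your cone claim is false. The vector $(1,q-2r)\in\Lambda$ lies in the open fourth quadrant but outside the cone spanned by $u=(1,q-r)$ and $u'=(r-q^\vee,-1)$.

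Discarding points with $|x+y|\ge r$ does not help either: $u$ and $u'$ still do not control the mixed-sign points. The fourth-quadrant hull is the dual Hirzebruch--Jung polygon, and its intermediate vertices can lie strictly on the origin side of the chord $[u,u']$. For example, $\frac1{21}(1,13)$ has $q^\vee=13$ and $v=7$, with $u=(1,-8)$ and $u'=(8,-1)$, and $(3,-3)=\tfrac13(u+u')\in\Lambda$.

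The data of $u,u'$ also cannot be expressed through $v$ alone. One has $\det(u,u')=r(m+2-v)$, where $qq^\vee=1+mr$.

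Most importantly, nothing in your plan connects fourth-quadrant lattice points to $\mu$, which is defined by first-quadrant points. Invoking the continued fraction of $r/(r-q)$ does not supply this link. The cases $v=1,\dots,4$ are likewise left unproved.

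\textbf{The missing idea.} The definition of $v$ is equivalent to the congruence $q^2+(2-v)q+1\equiv0\pmod r$. This makes $T_v(x,y)=(-y,x+(2-v)y)$ an integral map of determinant one with $T_v\Lambda=\Lambda$.

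For $v\ge2$ and $x,y>0$, $T_v$ sends $(x,-y)$ to the nonzero first-quadrant lattice vector $(y,x+(v-2)y)$, whose coordinate sum is $x+(v-1)y$. Hence every nonzero lattice point satisfies $|x|+(v-1)|y|\ge\mu r$. So the diamond $|x|+(v-1)|y|<\mu r$, of area $2\mu^2r^2/(v-1)$, contains no nonzero lattice point, and Minkowski gives $\mu^2\le2(v-1)/r$. This is the paper's Step~1 in contrapositive form.

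Using $T_v^{-1}$ as well gives $(v-1)|x|+|y|\ge\mu r$ at mixed-sign points, hence $|x-y|\ge2\mu r/v$ there. With this, your rectangle with $\ell_2=x-y$ and $c=2\mu r/v$ is lattice-free, and it even yields $\mu^2\le v/r$ for every $v\ge2$.

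For $v=1$, $T_1^{\pm1}$ give $\max(|x|,|y|)\ge\mu r$ at mixed-sign points. Your hexagon $\{|x|,|y|,|x+y|<\mu r\}$, of area $3\mu^2r^2$, then gives $\mu^2\le4/(3r)$. The paper instead treats $v=1,2,3$ with the invariant positive-definite forms $Q_v$ and their finite-order isometries, and $v=4$ via $r\mid(q-1)^2$.

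So your convex shapes are the right ones. What the proposal lacks is the lattice automorphism that makes them lattice-free.
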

\begin{proof}
The equation defining $v$ and $qq^{\vee}\equiv1\pmod r$ imply
\begin{equation}\label{e:eq:quadraticcongruence}
 q^2+(2-v)q+1\equiv0\pmod r.
\end{equation}
Consider the lattice
\[
 \Lambda=\{(x,y)\in\Z^2:y\equiv qx\pmod r\},\qquad\det\Lambda=r,
\]
and the integral transformation
\[
 T_v(x,y)=(-y,x+(2-v)y).
\]
It has determinant one and preserves $\Lambda$: for $y\equiv qx\pmod r$, its second coordinate minus $q$ times its first is congruent to
$[1+(2-v)q+q^2]x$, which is zero by \eqref{e:eq:quadraticcongruence}.

\emph{Step 1: a symmetric diamond gives the bound for all $v\ge2$.}
If $2(v-1)\ge r$, inequality~\eqref{e:eq:diamondbound} follows from $\mu\le1$. Otherwise, choose a real number $B$ with
\[
 \sqrt{2(v-1)r}<B<r.
\]
The symmetric convex diamond
\[
 \mathcal D_B=\{(x,y)\in\R^2:|x|+(v-1)|y|<B\}
\]
 has area $2B^2/(v-1)>4r$. Minkowski's theorem \cite{Cassels97} supplies a nonzero vector $(x,y)\in\Lambda\cap\mathcal D_B$. Neither coordinate is zero: a lattice vector on either coordinate axis has its nonzero coordinate divisible by $r$, so its weighted length is at least $r>B$.

If the coordinates have the same sign, multiply the vector by $-1$ if needed. This gives a positive lattice vector whose coordinate sum is at most $|x|+(v-1)|y|<B$. If they have opposite signs, multiply by $-1$ so that $x>0$ and $y<0$, and apply $T_v$. Since $v\ge2$, both new coordinates are positive, and their sum is exactly
\[
 -y+x+(2-v)y=x+(v-1)|y|<B.
\]
Thus in every case there is a positive vector in $\Lambda$ with coordinate sum less than $B<r$. Its coordinates represent a term in \eqref{q:eq:age}, so $\mu<B/r$. Letting $B$ decrease to $\sqrt{2(v-1)r}$ proves \eqref{e:eq:diamondbound}.

\emph{Step 2: positive-definite norms sharpen the small-excess cases.}
Introduce
\[
 Q_v(x,y)=x^2+(2-v)xy+y^2=(x+y)^2-vxy.
\]
Every value of $Q_v$ on $\Lambda$ is divisible by $r$. When $v=1,2,3$, the form is positive definite with matrix determinant $v(4-v)/4$. The ellipse $Q_v<2r$ has area
\[
 \frac{4\pi r}{\sqrt{v(4-v)}}>4r.
\]
Minkowski's convex-body theorem \cite{Cassels97} gives a nonzero lattice vector in this ellipse. Since its positive integral norm is divisible by $r$, its norm is exactly $r$.

We now move this vector into a suitable positive cone without losing membership in $\Lambda$. Details matter because an arbitrary shortest vector might have coordinates of opposite signs.

For $v=1$, the transformation $(x,y)\mapsto(-y,x+y)$ preserves both $Q_1$ and $\Lambda$, by \eqref{e:eq:quadraticcongruence}. Its six powers divide the plane into the images of the cone $x,y\ge0$. Thus we can take $x,y\ge0$ and $Q_1(x,y)=r$. Since
\[
 Q_1(x,y)\ge\tfrac34(x+y)^2,
\]
we have $(x+y)^2\le4r/3$.

For $v=2$, use $(x,y)\mapsto(-y,x)$ and its four powers to reach the first quadrant. Then $(x+y)^2\le2(x^2+y^2)=2r$.

For $v=3$, the transformation $S(x,y)=(x-y,x)$ preserves the lattice and $Q_3$. Its six powers map the cone $x\ge y\ge0$ onto sectors covering the plane. Choose the norm-$r$ vector in this cone. Both $(x,y)$ and $S(x,y)=(x-y,x)$ are nonnegative lattice vectors. For $t=y/x\in[0,1]$,
\[
 \min\{(1+t)^2,(2-t)^2\}\le3(1-t+t^2).
\]
Indeed, on $[0,1/2]$ the difference between the right side and $(1+t)^2$ is $(2t-1)(t-2)\ge0$; the other half follows by $t\mapsto1-t$. Therefore one of these two positive vectors has coordinate sum at most $\sqrt{3r}$.

In all three cases a nonzero norm-$r$ lattice vector cannot have a zero coordinate: then the other coordinate is a nonzero multiple of $r$ (use $\gcd(q,r)=1$), contradicting its norm $r$ for $r\ge2$. The same observation applies to the second vector used for $v=3$. If its coordinate sum is at least $r$, then the desired discrepancy bound is already at least one and $\mu\le1$ suffices. Otherwise both coordinates lie strictly between zero and $r$, and \eqref{q:eq:age} gives $\mu\le(x+y)/r$. This proves the assertions for $v=1,2,3$.

Finally, for $v=4$, congruence~\eqref{e:eq:quadraticcongruence} says $r\mid(q-1)^2$. Put $d=\gcd(r,q-1)$ and $n=r/d$. Prime by prime, this divisibility implies $d^2\ge r$, hence $n\le\sqrt r$. The vector $(n,n)$ belongs to $\Lambda$, so \eqref{q:eq:age} gives $\mu\le2n/r\le2/\sqrt r$; if necessary the bound $\mu\le1$ handles a coordinate sum at least $r$. Squaring gives the last assertion.
\end{proof}

\begin{proof}[Proof of Theorem~\ref{e:thm:single-quadratic}]
The nonpositive-excess case was proved using \eqref{e:eq:single-congruence}. For $v=1$, Lemma~\ref{e:lem:lattice} gives $V\ge1/r\ge3\mu^2/4$. For $v\ge2$, its diamond estimate gives
\[
 V\ge\frac vr\ge\frac{v}{2(v-1)}\mu^2\ge\frac12\eps^2.
\]
The improved small-excess constants follow from the same lemma. Since the numerical identities used above require only that $-K_X$ be nef and big, the argument also covers weak Fano surfaces.
\end{proof}

\begin{corollary}\label{e:cor:oneany}
If $X$ has at most one noncanonical point of any quotient type, then $V(X)\ge\eps^2/4$. The exponent two is optimal, even with exactly one singular point.
\end{corollary}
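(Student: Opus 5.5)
The corollary has three parts: zero noncanonical points, one cyclic point, and one noncyclic point; the optimality claim is deferred to the examples. I will treat these in order.

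\emph{No noncanonical point.} Then every $\delta_p=0$. Noether's formula \eqref{m:eq:noether} gives $V=10-\rho(Y)\in\Z_{>0}$, so $V\ge1\ge\eps^2/4$.

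\emph{One cyclic noncanonical point.} Theorem~\ref{e:thm:single-quadratic} gives $V\ge\eps^2/2$ directly. Canonical points are allowed there, and weak surfaces are covered.

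\emph{One noncyclic noncanonical point $p$.} I use the determinant integrality instead of a lattice argument. In Noether's formula every canonical point has $\delta=0$, and Lemma~\ref{m:lem:positive} gives $D_p\delta_p\in\Z$. Hence
\[
D_pV=D_p(10-\rho(Y))+D_p\delta_p\in\Z,
\]
and positivity yields $V\ge1/D_p$. Lemma~\ref{m:lem:fork} gives $D_p\le4/\eps$, so $V\ge\eps/4\ge\eps^2/4$. This argument needs only that $-K_X$ is nef and big, so it holds on weak surfaces without passing to the anticanonical model. That matters, because Lemma~\ref{m:lem:weak-model} does not preserve singularity types.

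The only subtlety is confirming that canonical points contribute nothing. This holds because their $\Delta_p$ vanishes by Lemma~\ref{m:lem:positive}, so no denominators other than $D_p$ enter.

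\emph{Optimality of the exponent two.} I would exhibit a family with exactly one singular point and volume of order $\eps^2$, for example from the quotient and partial-smoothing constructions in Section~\ref{sec:examples}. A natural candidate is a degeneration of $\PP^2$ to a surface with a single $T$-singularity $\frac1{n^2}(1,na-1)$. Its mld is of order $1/n$, its volume stays $9$, and further crepant-type modifications would drive the volume down to order $1/n^2$. This is the step I expect to need the most care: I must verify both the mld and the volume computation for an explicit single-point example, which I would take from the examples section rather than re-derive here.
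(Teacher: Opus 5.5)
Your lower-bound argument is the paper's own. With no noncanonical point, $V$ is a positive integer. In the cyclic case you apply Theorem~\ref{e:thm:single-quadratic}. In the noncyclic case you use $D_pV\in\Z_{>0}$ together with $D_p\le4/\eps$. Your remark that this needs no passage to the anticanonical model is also correct.

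The problem is the optimality paragraph: the candidate you name cannot work.

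\emph{The candidate has integral volume.} For a $T$-singularity $\frac1{n^2}(1,na-1)$ one has $q^\vee=n(n-a)-1$. Hence $q+q^\vee+2=r$, and the correction $\delta_p$ is an integer, as noted in Section~\ref{q:sec:shift}. If the only noncanonical point is a $T$-point, then \eqref{m:eq:noether} gives
\[
 V=10-\rho(Y)+\delta_p\in\Z_{>0},
\]
so $V\ge1$ for every $n$. A $\Q$-Gorenstein degeneration of $\PP^2$ even keeps $V=9$.

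\emph{Crepant modifications do not help.} A crepant birational modification preserves $V$ exactly, so it cannot ``drive the volume down.'' This route therefore produces no decay at all.

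\emph{The witness the paper uses.} It is $Z_{u,3}$ of Theorem~\ref{e:thm:oneexample}. Start from $Z_u=\PP^2/\langle g,\tau\rangle$ of Proposition~\ref{e:prop:quotients}. It has one point $\frac1r(1,u+1)$, with $r=u^2+u+1$, and three $A_2$ points. Smooth the three $A_2$ points in a $\Q$-Gorenstein family while keeping the cyclic germ. The result is a Fano surface with exactly one singular point, $V=3/r$ and $e=(u+2)/r$. Thus $V/e^2\to3$ while $e\to0$. Taking $\eps=e$, no bound $V\ge c\,\eps^p$ with $c>0$ and $p<2$ can hold on this class.

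You did gesture at the quotient and partial-smoothing constructions. The specific example you proposed should be replaced by this one.
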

\begin{proof}
The cyclic case is Theorem~\ref{e:thm:single-quadratic}. In the noncyclic case, $D_pV$ is a positive integer by \eqref{m:eq:noether}, while $D_p\le4/\eps$ as recalled in Section~\ref{sec:fixed}. Thus $V\ge\eps/4\ge\eps^2/4$. If there is no noncanonical point, $V$ is a positive integer. Optimality follows from $Z_{u,3}$ in Theorem~\ref{e:thm:oneexample}.
\end{proof}

\subsection{A local lemma for multiples of a correction class}\label{subsec:shift-general}
The following extends the one-point lattice argument to any fixed multiple of a correction class.

\begin{theorem}\label{q:thm:shift}
Let $\frac1r(1,q)$ be a cyclic quotient surface singularity, and let $\mu$, $v$, and $\eta$ be as in \eqref{q:eq:age} and \eqref{q:eq:excess}. Fix an integer $m\ge1$. If $w$ is the least positive residue of $mv$ modulo $r$, so that $1\le w<r$, then
\begin{equation}\label{q:eq:shiftbound}
 \fr{m\eta}=\frac wr\ge c_m\mu^2.
\end{equation}
One can take $c_m=1/(2m)$ for $m=1,2,3$. For any other fixed $m$, one can take
\[
 L_m=\lceil2\pi\sqrt m\rceil,\qquad c_m=\frac1{4m^{2L_m}}.
\]
The statement makes no assertion when $mv\equiv0\pmod r$; in the applications that case gives an integral volume after clearing the stated denominator.
\end{theorem}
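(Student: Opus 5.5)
The plan is to run the lattice argument of Lemma~\ref{e:lem:lattice} for the shifted residue instead of for $v$ itself. Keep the lattice
\[
 \Lambda=\{(x,y)\in\Z^2:y\equiv qx\pmod r\},\qquad\det\Lambda=r,
\]
and recall from \eqref{e:eq:quadraticcongruence} that $Q_v(x,y)=(x+y)^2-vxy$ is divisible by $r$ on $\Lambda$. Multiplying by $m$ and using $mv\equiv w\pmod r$ shows that the integral form
\[
 G_m(x,y)=m(x+y)^2-wxy=mx^2+(2m-w)xy+my^2
\]
is also divisible by $r$ on $\Lambda$. By the age formula \eqref{q:eq:age}, it suffices to produce a lattice vector with both coordinates positive and coordinate sum at most $\sqrt{wr/c_m}$. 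If that sum is at least $r$, the bound $\mu\le1$ already suffices, exactly as in the one-point case.

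\emph{Case $m\le3$.} Here I would imitate the two steps of Lemma~\ref{e:lem:lattice}, replacing $v$ by the residue $w$. When $w<4m$, the form $G_m$ is positive definite with determinant $w(4m-w)/4$. Minkowski's theorem, applied to the ellipse $G_m<Nr$ with $N$ of size about $\sqrt{mw}/\pi$, gives a nonzero vector with $G_m=jr$ for a bounded $j\ge1$.

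\begin{itemize}
\item If the vector has both coordinates of the same sign, then $wxy\le w(x+y)^2/4$ gives $(m-w/4)(x+y)^2\le jr$. This yields the bound once $w$ is small compared with $m$.
\item If $w\ge2m$, I would instead use a diamond $|x|+(w/m-1)|y|<B$. Since $w/m-1\ge1$, the same-sign and opposite-sign analysis of Step~1 applies. In the opposite-sign case one uses $G_m>0$, together with $G_m\equiv0\pmod r$, to force $G_m\ge r$ and obtain a contradiction.
\end{itemize}

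For $m=1,2,3$ the finitely many residue ranges should be checkable by hand and give $c_m=1/(2m)$.

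\emph{General $m$ and the opposite-sign obstacle.} The main obstacle is a short lattice vector with $x>0>y$. For $m=1$ this was handled by the integral automorphism $T_v$ of $\Lambda$. For general $m$, the automorphism of $G_m$ has the nonintegral entry $2-w/m$, so that device is unavailable. I would instead use $T_v$ itself, which does preserve $\Lambda$ and $Q_v$. Whenever $|2-v|<2$ modulo the relevant reduction, $T_v$ acts as a rotation, in the metric of $Q_v$, by an angle $\theta$ with $2\cos\theta\equiv 2-v$.

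The congruence $mv\equiv w$ should force $\theta$ to be at least of order $1/\sqrt m$ whenever $w$ is small. Hence at most $L_m=\lceil2\pi\sqrt m\rceil$ applications of $T_v$ rotate a short vector into the positive cone. Each application increases the coordinate sum by at most a factor $|2-v|+1$, which is controlled by $m$ along the relevant branch. This produces a positive vector whose coordinate sum is inflated by at most $m^{L_m}$, and squaring gives the loss $m^{2L_m}$ in $c_m$.

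If this rotation bookkeeping fails, the fallback is a continued-fraction descent. Add an opposite-sign vector to a second independent Minkowski vector, and use the positivity of $G_m$ to bound the number of reduction steps by $L_m$.

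The hard step is making this angle estimate precise in terms of $w$ rather than $v$, since $v$ itself may be large or negative. The rest is routine area bookkeeping with the exact ellipse and diamond formulas recorded in Section~\ref{sec:minkowski}.
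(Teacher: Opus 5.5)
Your setup matches the paper: the lattice $\Lambda$, the form $G_m(x,y)=m(x+y)^2-wxy$ with $G_m(\Lambda)\subset r\Z$, and the reduction via \eqref{q:eq:age} to finding a first-quadrant lattice vector with a controlled coordinate sum. The gap is the step you set aside. The $G_m$-isometry does have the nonintegral entry $2-w/m$, but the paper multiplies it by $m$. The matrix $T=\begin{pmatrix}0&-m\\ m&2m-w\end{pmatrix}$ is integral. The congruence $mq^2+(2m-w)q+m\equiv0\pmod r$ (multiply $w\equiv m(q+q^\vee+2)$ by $q$) gives $T\Lambda\subset\Lambda$, and $G_m(Tz)=m^2G_m(z)$. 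More generally $G_m((aI+bT)z)=\det(aI+bT)\,G_m(z)$. So $T$ is not an automorphism of $\Lambda$, but it moves vectors inside $\Lambda$ at a known cost in $G_m$, and that cost is exactly where $m^{2L_m}$ comes from.

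Your substitute $T_v$ cannot play this role. It preserves $Q_v$, not $G_m$. Replacing $v$ by another representative $v'$ modulo $r$ keeps $T_{v'}\Lambda=\Lambda$, but $T_{v'}$ is elliptic only when $v'\in\{1,2,3\}$. In the hardest range $w<m$ (with $r>3m$) this never happens: $mv'\equiv w$ and $m\le mv'<r$ would force $w=mv'\ge m$. So $T_v$ is hyperbolic there, there is no rotation angle tied to $w$, and the per-step inflation $|2-v'|+1$ is not bounded in terms of $m$. The continued-fraction fallback is too unspecified to close this.

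The same missing map breaks your treatment of $m\le3$. For $w\ge2m$ there is no contradiction in the opposite-sign case. The inequality $G_m(x,-y)=m(x-y)^2+wxy\ge r$ is compatible with your diamond: near its vertex $(B,0)$ one has $G_m\approx mB^2>2(w-m)r\ge2r$. Step~1 of Lemma~\ref{e:lem:lattice} never argued by contradiction; it moved the vector with $T_v$. The correct version uses the diamond $m|x|+(w-m)|y|<B$ and $T(x,-y)=(my,mx+(w-2m)y)$, whose coordinate sum is exactly the diamond weight. This gives $\mu^2\le2m(w-m)/r$ for every $m$.

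For $0<w<2m$ you only handle same-sign Minkowski vectors, but the opposite-sign vectors are the real content. The paper covers all ratios $u=x/y>0$ by the integral, $\Lambda$-preserving matrices $T$, $-\adj(T)$, $S=T-mI$ and $-\adj(S)$, each of determinant at most $m^2$. When $(m,w)=(3,1)$ it adds $U=T-I$, and a parity argument rules out $G_m(z)=2r$. Combined with a suitable ellipse bound on $G_m(z)$, this yields $c_m=1/(2m)$.

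For general $m$, $T/m$ is a $G_m$-rotation by $\theta$ with $\cos\theta=1-w/(2m)$, so $\theta\ge1/\sqrt m$. The first quadrant has $G_m$-angle exactly $\theta$. Hence some $T^jz$ with $j\le L_m$ lies in it, with $G_m(T^jz)<2m\cdot m^{2L_m}r$, and $(x+y)^2\le(2/m)G_m$ there finishes the proof.
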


\subsubsection{The lattice and its norm identities}
Put
\[
 \Lambda=\{(x,y)\in\Z^2:y\equiv qx\pmod r\},\qquad \det\Lambda=r.
\]
Multiplying $w\equiv m(q+q^\vee+2)$ by $q$ gives
\begin{equation}\label{q:eq:poly}
 mq^2+(2m-w)q+m\equiv0\pmod r.
\end{equation}
Define
\begin{equation}\label{q:eq:TQ}
 T=\begin{pmatrix}0&-m\\m&2m-w\end{pmatrix},\qquad
 Q(x,y)=m(x+y)^2-wxy.
\end{equation}
Equation~\eqref{q:eq:poly} proves both $T\Lambda\subset\Lambda$ and $Q(\Lambda)\subset r\Z$. Direct expansion gives
\begin{equation}\label{q:eq:similarity}
 Q(Tz)=m^2Q(z),\qquad
 Q((aI+bT)z)=\det(aI+bT)Q(z)
 \quad(a,b\in\Z).
\end{equation}
Here $\det(aI+bT)=a^2+ab(2m-w)+b^2m^2$. Also $\adj(A)=(\operatorname{tr}A)I-A$ for every two by two matrix $A$, so the adjugates of the matrices used below preserve $\Lambda$.

We will repeatedly use the following elementary observation. A nonzero lattice vector in the closed first quadrant with coordinate sum $s$ gives
\begin{equation}\label{q:eq:vectorage}
 \mu^2\le C/r\quad\hbox{whenever }s^2\le Cr.
\end{equation}
If $s<r$, neither coordinate can vanish, since a nonzero axis vector in $\Lambda$ has length at least $r$. Both coordinates are then between $1$ and $r-1$, and \eqref{q:eq:age} gives $\mu\le s/r$. If $s\ge r$, the assumed bound implies $C/r\ge1$, and $\mu\le1$ suffices. Thus boundary vectors introduce no gap.

\subsubsection{The range $w\ge2m$}
If $2m(w-m)\ge r$, then $\mu^2\le1\le2m(w-m)/r$. Otherwise choose
\[
 \sqrt{2m(w-m)r}<B<r.
\]
The symmetric diamond $m|x|+(w-m)|y|<B$ has area $2B^2/[m(w-m)]>4r$. Minkowski's convex body theorem \cite[Chapter III]{Cassels97} supplies a nonzero vector of $\Lambda$ in this diamond. Neither coordinate vanishes. If the coordinates have the same sign, change the overall sign; their sum is at most the weighted norm. If they have opposite signs, write them as $(x,-y)$ with $x,y>0$. Then
\[
 T(x,-y)=(my,mx+(w-2m)y)
\]
lies in the first quadrant and its coordinate sum is exactly $mx+(w-m)y<B$. Apply \eqref{q:eq:age} and let $B$ decrease to its lower endpoint. In both cases,
\begin{equation}\label{q:eq:large}
 \mu^2\le\frac{2m(w-m)}r\le\frac{2mw}r.
\end{equation}
This already proves the desired constant $1/(2m)$ in this range, for every $m$.

\subsubsection{The small residues for $m=1,2,3$}
Assume $0<w<2m$. The form $Q$ is positive definite, with matrix determinant $w(4m-w)/4$. The ellipse $Q<R$ therefore has area
\[
 \frac{2\pi R}{\sqrt{w(4m-w)}}.
\]
Minkowski and $Q(\Lambda)\subset r\Z$ give a nonzero $z\in\Lambda$ with the bounds in the middle column below:
\begin{center}
\begin{tabular}{ccc}
\toprule
$(m,w)$ & Bound on $Q(z)$ & Ellipse used\\
\midrule
$(1,1)$ & $r$ & $Q<2r$\\
$(2,1)$ & $r$ & $Q<2r$\\
$(2,2),(2,3)$ & $2r$ & $Q<3r$\\
$(3,1)$ & $r$ & $Q<3r$, followed by parity\\
$(3,w),\ 2\le w\le5$ & $3r$ & $Q<4r$\\
\bottomrule
\end{tabular}
\end{center}
Each displayed ellipse has area strictly greater than $4r$. In the exceptional parity step, \eqref{q:eq:poly} is $3q^2+5q+3\equiv0\pmod r$. If $r$ were even, $q$ would be odd and the left side odd, a contradiction. Thus $r$ is odd. Now $Q(x,y)\equiv x^2+xy+y^2\pmod2$; an even value forces both $x,y$ even, and hence $4\mid Q(x,y)$. The alternative $Q(z)=2r$ is impossible. This explains every entry of the table.

We next move $z$ into the closed first quadrant at a controlled cost in $Q$. If its coordinates already have the same weak sign, only an overall sign is needed. Otherwise write $z=(x,-y)$ with $x,y>0$, and put
\[
 u=x/y,\qquad c=2-w/m\in(0,2).
\]
The following intervals follow by multiplying the matrices by $(x,-y)$:
\begin{center}
\begin{tabular}{ccc}
\toprule
Matrix & Interval giving nonnegative coordinates & Determinant\\
\midrule
$T$ & $u\ge c$ & $m^2$\\
$-\adj(T)$ & $u\le1/c$ & $m^2$\\
$S=T-mI$ & $c-1\le u\le1$ & $mw$\\
$-\adj(S)$ & $1\le u\le1/(c-1)$ & $mw$\\
\bottomrule
\end{tabular}
\end{center}
The last two rows are used only when $c>1$. If $c\le1$, the first two rows cover all $u>0$. If $1<c\le(1+\sqrt5)/2$, all four intervals cover $u>0$, because $c-1\le1/c$ and $1/(c-1)\ge c$. Here $w<m$, so $mw\le m^2$.

Among the required pairs, only $(m,w)=(3,1)$ has not yet been covered. In that case $c=5/3$. Besides $T$ and $S$, use $U=T-I$. The intervals are
\[
\begin{array}{c|c|c}
 A&\text{admissible }u&\det A\\ \hline
 T&[5/3,\infty)&9\\
 -\adj(T)&(0,3/5]&9\\
 S&[2/3,1]&3\\
 -\adj(S)&[1,3/2]&3\\
 U&[4/3,3]&5\\
 -\adj(U)&[1/3,3/4]&5
\end{array}
\]
These intervals cover $(0,\infty)$. Thus in every case an integral lattice-preserving matrix $A$ gives $Az$ in the closed first quadrant and $Q(Az)\le m^2Q(z)$. No inverse with nonintegral entries has been used.

In this quadrant, $xy\le(x+y)^2/4$, so
\[
 Q(x,y)\ge(m-w/4)(x+y)^2.
\]
Combining this with the preceding table and \eqref{q:eq:vectorage} gives
\[
\begin{array}{c|c|c}
 (m,w)&r\mu^2\text{ is at most}&\text{comparison}\\ \hline
 (1,1)&4/3&\le2w\\
 (2,1)&16/7&\le4w\\
 (2,2),(2,3)&32/(8-w)&\le4w\\
 (3,1)&36/11&\le6w\\
 (3,w),\ 2\le w\le5&108/(12-w)&\le6w
\end{array}
\]
This proves \eqref{q:eq:shiftbound} with $c_m=1/(2m)$ for $m=1,2,3$.

\subsubsection{An explicit constant for every fixed multiplicity}
Still assume $0<w<2m$, now with arbitrary $m$. The ellipse $Q<2mr$ has area greater than $4r$, hence contains a nonzero lattice vector $z$ with $Q(z)<2mr$. In the Euclidean structure defined by $Q$, the transformation $T/m$ is a rotation through an angle $\theta\in(0,\pi/2)$ satisfying
\[
 \cos\theta=1-\frac{w}{2m},\qquad
 \theta\ge\sqrt{w/m}\ge1/\sqrt m.
\]
The angle inequality follows from $1-\cos\theta\le\theta^2/2$. The first quadrant has $Q$-angle $\theta$: the $Q$-inner product of the two coordinate vectors, divided by their lengths, is $1-w/(2m)$, and $T e_1=m e_2$.

Successive sectors of angular width $\theta$ cover the circle after at most $\lceil2\pi/\theta\rceil$ steps. Hence some $0\le j\le L_m$ makes $T^jz$ lie in the closed first quadrant. This uses positive integral powers of $T$, so the resulting vector is in $\Lambda$. Since $m-w/4>m/2$,
\[
 \bigl((T^jz)_1+(T^jz)_2\bigr)^2
 \le\frac2m Q(T^jz)<4m^{2L_m}r.
\]
Equation~\eqref{q:eq:vectorage} gives $\mu^2\le4m^{2L_m}/r$. Since $w\ge1$, this proves the announced $c_m$. Together with \eqref{q:eq:large}, it completes Theorem~\ref{q:thm:shift}.

\subsection{Equal correction classes and bounded denominators}

\begin{theorem}\label{q:thm:equal}
Suppose there are cyclic points $p_1,\ldots,p_m$ with equal correction classes in $\Q/\Z$, and every other singularity has integral correction. Then
\[
 V\ge c_m\eps^2.
\]
\end{theorem}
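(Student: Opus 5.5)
The plan is to reduce to Theorem~\ref{q:thm:shift} by a congruence for $V$ read off from Noether's formula, exactly as in the one-point argument of Theorem~\ref{e:thm:single-quadratic}. By Lemma~\ref{m:lem:noether}, $V=10-\rho(Y)+\sum_p\delta_p$. By hypothesis every singular point other than $p_1,\ldots,p_m$ has $\delta_p\in\Z$. By \eqref{q:eq:cyclicdelta}, each $p_j$ has $\delta_{p_j}\equiv\eta_j\pmod\Z$, and the classes $\eta_j$ all equal a common class $\eta\in\Q/\Z$. Hence
\[
 V\equiv m\eta\pmod\Z .
\]

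We then split into two cases according to whether $m\eta$ is integral.

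If $m\eta\in\Z$, then $V$ is a positive integer, so $V\ge1\ge c_m\eps^2$. Here we use $c_m\le1/2$ and $\eps\le1$.

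If $m\eta\notin\Z$, then $V>0$ lies in $\Z+m\eta$, which gives $V\ge\fr{m\eta}$. Fix any one of the points, say $p_1$, of type $\frac1r(1,q)$, with $\eta=v/r$ and $\mu_1=\mld_{p_1}(X)$. Then $mv\not\equiv0\pmod r$, and $\fr{m\eta}=w/r$, where $w$ is the least positive residue of $mv$ modulo $r$. Theorem~\ref{q:thm:shift} now gives $\fr{m\eta}\ge c_m\mu_1^2$. Since $X$ is $\eps$-lc, $\mu_1\ge\eps$, and therefore $V\ge c_m\eps^2$.

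Two small points still need checking. The first is that the equality of classes in $\Q/\Z$ is really an equality of the fractional parts $\fr{\eta_j}$. This lets us compute the residue at the single point $p_1$ using its own $r$ and $v$, even though the $p_j$ may have different orders. The second is that Noether's identity and the integrality of the other corrections hold on weak surfaces, since they only use that $-K_X$ is nef and big. Canonical points have $\delta_p=0$, and noncyclic points are covered by the hypothesis of integral correction.

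I expect no serious obstacle, because all the arithmetic is carried by Theorem~\ref{q:thm:shift}. The only delicate step is matching its statement: it is formulated for one point, with its own $v$ modulo its own $r$, so one must confirm that the common class $\eta$ is represented by $v_1/r_1$.
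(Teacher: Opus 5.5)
Your proposal is correct and follows the paper's proof. Noether's formula \eqref{m:eq:noether} together with \eqref{q:eq:cyclicdelta} gives $V\in\Z+m\eta$; the integral case gives $V\ge1$; otherwise $V\ge\fr{m\eta}\ge c_m\mld_{p_1}(X)^2\ge c_m\eps^2$ by Theorem~\ref{q:thm:shift} applied at $p_1$. Your two side checks, that the common class is represented by $v_1/r_1$ and that only nefness and bigness of $-K_X$ are used, are points the paper leaves implicit.
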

\begin{proof}
Choose $p_1$ and its excess $\eta$. Equations~\eqref{m:eq:noether} and \eqref{q:eq:cyclicdelta} give $V\in\Z+m\eta$. If $m\eta\in\Z$, positivity gives $V\ge1\ge c_m\eps^2$. Otherwise $V\ge\fr{m\eta}$, and Theorem~\ref{q:thm:shift} gives $V\ge c_m\mld_{p_1}(X)^2\ge c_m\eps^2$.
\end{proof}

This allows different cyclic analytic types when their correction classes agree. It does not allow replacing several unrelated corrections by an average: the polynomial congruence \eqref{q:eq:poly} belongs to a single cyclic lattice.

\begin{theorem}\label{q:thm:denominator}
Suppose $p$ is cyclic and $d\sum_{q\ne p}\delta_q\in\Z$ for some integer $d\ge1$. Then
\[
 V\ge\frac{c_d}{d}\eps^2.
\]
\end{theorem}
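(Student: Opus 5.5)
The plan is to mirror the proof of Theorem~\ref{q:thm:equal}, now taking the shift multiplicity to be $d$. Write $\eta$ for the excess of $p$ as in \eqref{q:eq:excess}. By \eqref{m:eq:noether} and \eqref{q:eq:cyclicdelta},
\[
 V=10-\rho(Y)+\delta_p+\sum_{q\ne p}\delta_q,
\]
so the congruence $\delta_p\equiv\eta\pmod\Z$ gives
\[
 dV\in\Z+d\eta+d\sum_{q\ne p}\delta_q=\Z+d\eta .
\]
The hypothesis is used here, to absorb all the other corrections into the integers after multiplying by $d$. The number of other points and their types play no further role.

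The argument then splits into two cases according to whether $d\eta$ is an integer. If $d\eta\in\Z$, then $dV$ is a positive integer. Hence $V\ge1/d\ge c_d\eps^2/d$, since $c_d\le1/2$ and $\eps\le1$. Otherwise, positivity of $V$ forces $dV\ge\fr{d\eta}$. Let $w$ be the least positive residue of $dv$ modulo $r$. Theorem~\ref{q:thm:shift} with $m=d$ gives $\fr{d\eta}=w/r\ge c_d\mu^2$, where $\mu=\mld_p(X)$. Since $X$ is $\eps$-lc, $\mu\ge\eps$, so $V\ge c_d\mu^2/d\ge c_d\eps^2/d$.

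One point needs checking: Theorem~\ref{q:thm:shift} concerns the residue of $mv$ modulo $r$, while the identity above involves $d\eta=dv/r$ modulo $\Z$. These agree because $\fr{dv/r}=w/r$ exactly when $dv\not\equiv0\pmod r$, and the complementary case is the integral case already treated.

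I do not expect a genuine obstacle, since all the lattice work is contained in Theorem~\ref{q:thm:shift}. The only care needed is bookkeeping. First, the correct multiple of the correction class must be $d$ times $\eta$ and not $\eta$ itself. Second, the weak del Pezzo case must be covered; this holds because the numerical identities used require only that $-K_X$ be nef and big, so no passage to the anticanonical model is needed.
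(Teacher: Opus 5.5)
Your proposal is correct and matches the paper's proof. The paper likewise writes $dV\in\Z+d\eta_p$, uses $dV\ge1$ when $d\eta_p\in\Z$, and otherwise applies Theorem~\ref{q:thm:shift} with $m=d$ to get $dV\ge\{d\eta_p\}\ge c_d\mld_p(X)^2$, then divides by $d$; your checks that $c_d\le 1/2$ and that $\{dv/r\}=w/r$ exactly when $dv\not\equiv0\pmod r$ are bookkeeping the paper leaves implicit.
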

\begin{proof}
Now $dV\in\Z+d\eta_p$. If the latter correction is integral, $dV\ge1$. Otherwise $dV\ge\fr{d\eta_p}\ge c_d\mld_p(X)^2$. Divide by $d$.
\end{proof}

A sufficient geometric hypothesis is that the local Cartier indices of all other points divide $d$, by \eqref{m:eq:delta}. More generally only their \emph{sum} needs the stated denominator.

Integral corrections include every $T$-singularity. By Koll\'ar--Shepherd-Barron, the noncanonical $T$-points are $\frac1{dn^2}(1,dna-1)$, with $n\ge2$, $1\le a<n$, and $\gcd(a,n)=1$; see \cite[Proposition 2.7]{HP10}, tracing the classification to \cite{KSB88}. Their inverse weight is $dn(n-a)-1$, so $q+q^\vee+2=r$ and \eqref{q:eq:cyclicdelta} is integral. Du Val points have $\Delta_p=0$. Thus all the theorems in this section permit arbitrary additional $T$-points, without bounding their indices or their number. No deformation is needed for this assertion.

\begin{corollary}[Integral corrections and one remaining point]
If all singularities are $T$-singularities, then $V\ge1$. If all but at
most one point are $T$-singularities, then $V\ge\eps^2/4$; when the
remaining point is cyclic, $V\ge\eps^2/2$.
\end{corollary}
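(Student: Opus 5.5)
The plan is to deduce all three assertions from Noether's formula \eqref{m:eq:noether}, $V=10-\rho(Y)+\sum_p\delta_p$, together with the integrality of corrections at $T$-singularities and Du Val points recorded just above. The weak case is handled in the same way as the other results of this section. The numerical identities use only that $-K_X$ is nef and big, so I will work directly on $X$ and not pass to the anticanonical model. Passing to that model could change singularity types, which is why I avoid it.

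\emph{Step 1: all points are $T$-singularities.} Every singular point $p$ then has $\delta_p\in\Z$. At a Du Val point $\Delta_p=0$. At a noncanonical $T$-point $\frac1{dn^2}(1,dna-1)$ one has $q+q^\vee+2=r$, so $v=0$, $\eta=0$, and \eqref{q:eq:cyclicdelta} gives $\delta_p\in\Z$. Noether's formula therefore makes $V$ an integer. Since $V>0$, this gives $V\ge1$.

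\emph{Step 2: one remaining point $p$ that is not a $T$-singularity.} Noether's formula gives $V\in\Z+\delta_p$. First suppose $p$ is canonical; it is then Du Val. Such a point has already been counted among the integral-correction points, so it cannot be the remaining point. In any case $V$ is then an integer and $V\ge1$. So assume $p$ is noncanonical. Every other singular point is either Du Val or a $T$-point.

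If $p$ is cyclic, apply Theorem~\ref{q:thm:denominator} with $d=1$. Its hypothesis $\sum_{q\ne p}\delta_q\in\Z$ holds by Step~1, and it gives $V\ge c_1\eps^2=\eps^2/2$. This is the same as Theorem~\ref{q:thm:equal} with $m=1$, and it is the argument of Theorem~\ref{e:thm:single-quadratic} with integral extra corrections added. The case $\eta_p\in\Z$ gives $V\ge1$ directly.

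If $p$ is noncyclic, Lemma~\ref{m:lem:positive} gives $D_p\delta_p\in\Z$. Hence $D_pV\in\Z_{>0}$, since the other corrections are integers. Lemma~\ref{m:lem:fork} gives $D_p\le4/\eps$, so $V\ge\eps/4\ge\eps^2/4$. Combining the cyclic and noncyclic cases gives $V\ge\eps^2/4$ in general.

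\emph{Main obstacle.} Essentially everything is already proved. The one point to watch is that $T$-points may be noncanonical and arbitrarily numerous. This causes no difficulty, because only the integrality of their $\delta$ enters, and that comes from the identity $q+q^\vee+2=r$ for $T$-points. I would double-check this identity against \eqref{q:eq:excess}, using the inverse weight $dn(n-a)-1$ stated in the text.
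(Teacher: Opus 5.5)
Your proof is correct and follows essentially the same route as the paper. The first claim comes from the integrality of the $T$- and Du Val corrections in Noether's formula. A remaining cyclic point is handled by Theorem~\ref{q:thm:denominator} with $d=1$, so $c_1=1/2$. A remaining noncyclic point is handled by $D_pV\in\Z_{>0}$ together with $D_p\le4/\eps$. Your check that the inverse weight $dn(n-a)-1$ gives $q+q^\vee+2=r$, hence $\eta=0$, is exactly the endpoint calculation the paper relies on.
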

\begin{proof}
The classification and endpoint calculation above show that all
$T$-corrections are integral. With no remaining point, Noether's
formula makes $V$ a positive integer. With one cyclic point, apply
Theorem~\ref{q:thm:denominator} with $d=1$. With one noncyclic point,
clearing its determinant gives $DV\in\Z_{>0}$, hence
$V\ge1/D\ge\eps/4\ge\eps^2/4$.
\end{proof}

\subsection{Toric surfaces}\label{o:sec:toric}

We normalize area in a rank-two lattice so that a fundamental parallelogram has area one, and use the dual normalization on the dual lattice. For a convex body $A$ containing the origin in its interior, write
\[
 A^{\circ}=\{u\mid\langle u,v\rangle\le1\text{ for all }v\in A\}
\]
for its polar.

\begin{proposition}\label{o:prop:toric-lower}
If $X$ is an $\eps$-lc toric weak del Pezzo surface, then $V(X)\ge\eps^2$.
\end{proposition}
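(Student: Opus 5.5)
We first reduce to the toric del Pezzo case. The anticanonical model of Lemma~\ref{m:lem:weak-model} is again toric, since the torus acts on the section ring of $-K_X$; it preserves $V$ and $\eps$-lc. So assume $-K_X$ is ample.

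Let $N$ be the cocharacter lattice and $\Sigma$ the complete fan of $X$, with primitive ray generators $v_1,\dots,v_k$. Set $P=\conv\{v_1,\ldots,v_k\}\subset N_\R$. Standard toric geometry gives
\[
 -K_X=\sum D_i,\qquad \{u\in M_\R:\langle u,v_i\rangle\ge-1\ \forall i\}=-P^\circ,
\]
and $V(X)=(-K_X)^2=2\area(P^\circ)$ in the dual normalization. Ampleness means the $v_i$ are exactly the vertices of $P$.

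Next, translate $\eps$-lc into a statement about $P$. By the toric log discrepancy formula \cite{Amb06}, a lattice point $n$ in the cone $\sigma=\langle v_i,v_{i+1}\rangle$ has log discrepancy $\varphi(n)$, where $\varphi$ is the linear function on $\sigma$ equal to $1$ at $v_i$ and $v_{i+1}$. On $\sigma$, $\varphi$ is exactly the gauge function of $P$. Therefore $\eps$-lc says every nonzero lattice point $n$ has gauge $\|n\|_P\ge\eps$. In other words, the open body $\eps\,\Int(P)$ contains no nonzero lattice point.

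Minkowski's theorem in dimension two, applied to the symmetric convex body $\eps\,\Int(P\cap -P)$, then gives
\[
 \eps^2\area(P\cap(-P))\le4.
\]
The remaining step is to convert this into a lower bound on $\area(P^\circ)$, and that is where we need a polar-area inequality. The symmetric body $K=P\cap(-P)$ has polar $K^\circ=\conv(P^\circ\cup -P^\circ)$. The planar Mahler inequality for symmetric bodies gives $\area(K)\area(K^\circ)\ge8$. Hence $\area(K^\circ)\ge2\eps^2$.

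We must then compare $\area(\conv(P^\circ\cup-P^\circ))$ with $\area(P^\circ)$. Since $P^\circ$ contains the origin, $\conv(P^\circ\cup -P^\circ)\subset P^\circ-P^\circ$. The Rogers--Shephard inequality in the plane gives $\area(P^\circ-P^\circ)\le6\area(P^\circ)$. Combining,
\[
 V=2\area(P^\circ)\ge\tfrac{2}{6}\cdot2\eps^2=\tfrac23\eps^2,
\]
which falls short of the claimed constant.

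\textbf{Main obstacle.} Getting the sharp constant $1$ is the crux. First, I would avoid symmetrization: apply a nonsymmetric Minkowski-type bound directly to $P$. A planar convex body whose only interior lattice point is the origin has area at most $9/2$ (Ehrhart–Scott type results). This gives $\eps^2\area(P)\le 9/2$. Pair it with the planar Santaló/Mahler bound $\area(P)\area(P^\circ)\ge 27/4$, which holds for any origin-containing body since it dominates the value at the Santaló point minimum for triangles. That yields $V\ge 2\cdot\tfrac{27}{4}\cdot\tfrac{2}{9}\eps^2=3\eps^2\ge\eps^2$. The delicate points to verify are that the $\eps$-scaled interior exclusion matches the hypothesis of the $9/2$ area bound, and that the planar Mahler lower bound $27/4$ applies with an arbitrary interior origin. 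If either fails, I would fall back to the symmetrized argument above and adjust the constants.
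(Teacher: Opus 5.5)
Your first route is the paper's argument up to the final comparison. The ample reduction, $V=2\area(P^\circ)$, the lattice-freeness of $\Int(\eps P)$, Minkowski for $K=P\cap(-P)$, and the symmetric Mahler bound are all as in the paper. The constant is lost at the last step. Passing through the difference body and $\area(P^\circ-P^\circ)\le6\area(P^\circ)$ gives only $V\ge\frac23\eps^2$, so as written the stated bound is not proved.

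The missing ingredient is the Rogers--Shephard inequality for the symmetric hull, not for the difference body: if $0\in Q$, then $\area(\conv(Q\cup(-Q)))\le4\area(Q)$. This is the $2^n$ bound, attained by a triangle with a vertex at the origin. Inserting it gives $\area(P^\circ)\ge\frac14\area(K^\circ)\ge\frac12\eps^2$, hence $V\ge\eps^2$. This is exactly the paper's final step.

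The repair you propose instead fails. A planar convex body whose only interior lattice point is the origin can have arbitrarily large area. Ehrhart's $9/2$ bound needs the origin to be the centroid, and Scott's needs lattice vertices; $\eps P$ has neither property in general.

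The failure already occurs for toric surfaces. For $N\ge5$, take rays $(0,-1)$ and $(\pm a,N-1)$ with $a=N^2-2$; these are primitive since $\gcd(a,N-1)=1$.
\begin{itemize}
\item Interior points of $\frac{1}{N}P$ have second coordinate in $(-\frac{1}{N},1-\frac{1}{N})$, so any interior lattice point lies on $y=0$.
\item On $y=0$ the half-width is $a/N^2<1$, so the origin is the only interior lattice point and the surface is $\eps$-lc with $\eps=1/N$.
\item Yet $\eps^2\area(P)=a/N>9/2$.
\end{itemize}
Its volume is $2N^2/(a(N-1))\approx2\eps$, so the example contradicts only your lemma, not the proposition. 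Your $27/4$ Mahler bound for an arbitrary interior origin is correct, but it cannot compensate for the false area bound.
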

\begin{proof}
By Lemma~\ref{m:lem:weak-model}, we may pass to the anticanonical model, which is again toric: the anticanonical ring and its contraction are torus equivariant. Thus assume that $-K_X$ is ample. Let $N$ be the lattice of one-parameter subgroups, $M=N^{\vee}$, and let $v_1,\ldots,v_d$ be the primitive fan-ray generators. Set
\[
 P_0=\conv(v_1,\ldots,v_d)\subset N_{\R},\qquad Q=P_0^{\circ}\subset M_{\R}.
\]
The origin lies in the interior of $P_0$. The anticanonical polytope is $-Q$, so the toric degree formula gives
\begin{equation}\label{o:eq:toric-volume}
 V(X)=2\area(Q).
\end{equation}
For the divisor-polytope correspondence and the degree formula, see \cite[Sections~3.4 and~5.3]{Fulton93}; for a rational polytope, apply the Cartier formula to a sufficiently divisible multiple and divide by its square.

The piecewise linear function $\psi$ with $\psi(v_i)=1$ is the toric log discrepancy function, and $P_0=\{v\mid\psi(v)\le1\}$. The discrepancy of a divisor corresponding to a primitive lattice point $v$ is $\psi(v)$; see \cite[Section~1]{Amb06}. Therefore the $\eps$-lc condition implies
\begin{equation}\label{o:eq:lattice-free}
 \Int(\eps P_0)\cap N=\{0\}.
\end{equation}
The same statement holds for nonprimitive lattice points, since $\psi$ is positive and homogeneous on rays.

Put $C=\conv(Q\cup(-Q))$. Then $C$ is centrally symmetric and
\[
 C^{\circ}=P_0\cap(-P_0).
\]
By \eqref{o:eq:lattice-free}, the interior of $\eps C^{\circ}$ contains no nonzero point of $N$. Minkowski's convex-body theorem therefore yields
\begin{equation}\label{o:eq:minkowski}
 \area(C^{\circ})\le4\eps^{-2}.
\end{equation}
We use the usual rank-two form of this theorem, with lattice covolume one; see \cite[pp.~64--102]{Cassels97}. An interior formulation follows from the usual formulation by applying it to a slightly smaller homothetic copy.

The planar symmetric Mahler inequality \cite{Mahler39} and the symmetric-hull inequality of Rogers--Shephard \cite{RS58} give, respectively,
\begin{equation}\label{o:eq:convex-inputs}
 \area(C)\area(C^{\circ})\ge8,
 \qquad \area(C)\le4\area(Q).
\end{equation}
For the latter statement with the hypothesis $0\in Q$, see also \cite[equation~(2)]{BC07}. Combining \eqref{o:eq:minkowski} and \eqref{o:eq:convex-inputs} gives
\[
 2\area(Q)\ge\frac{\area(C)}2
 \ge\frac4{\area(C^{\circ})}\ge\eps^2.
\]
Equation~\eqref{o:eq:toric-volume} completes the proof.
\end{proof}

\section{Examples and the possible optimal exponent}\label{sec:examples}

For comparison of asymptotic exponents, use the actual discrepancy threshold
\[
 e(X):=\min\bigl(\{1\}\cup\{\mld_p(X):p\in\Sing X\}\bigr).
\]
The three constructions below distinguish volume, determinant mass,
and Picard number. All claimed examples are projective surfaces or pairs;
no formal basket is used as a geometric example.
\subsection{A toric rank-one family and an additional quotient}\label{e:sec:quotient}

\begin{proposition}\label{e:prop:quotients}
Let $u\ge2$, $r=u^2+u+1$, and $\zeta$ be a primitive $r$th root of unity. Put
\[
 g[x:y:z]=[x:\zeta y:\zeta^{u+1}z],
 \qquad \tau[x:y:z]=[y:z:x].
\]
The surfaces $T_u=\PP^2/\langle g\rangle$ and
$Z_u=\PP^2/\langle g,\tau\rangle$ are Fano of Picard number one. The first has three points of type $\frac1r(1,u+1)$. The second has one such point and three $A_2$ points. Their invariants are
\[
 e(T_u)=e(Z_u)=\frac{u+2}{r},\quad
 V(T_u)=\frac9r,\quad V(Z_u)=\frac3r,
\]
\[
 \rho(\widetilde T_u)=3u+4,
 \qquad \rho(\widetilde Z_u)=u+8.
\]
\end{proposition}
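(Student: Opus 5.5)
We compute everything on $\PP^2$ and push it down through the quotient maps, treating $T_u$ first and then $Z_u=T_u/\langle\tau\rangle$.

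\emph{Fixed points and local types for $T_u$.} Since $\gcd(u,r)=\gcd(u+1,r)=1$, the exponents $0,1,u+1$ are pairwise distinct mod $r$. We check that every nontrivial power of $g$ fixes only the three coordinate points $[1:0:0]$, $[0:1:0]$, $[0:0:1]$, so $\langle g\rangle$ acts freely in codimension one. At $[1:0:0]$ the weights are $(1,u+1)$, giving the type $\frac1r(1,u+1)$. At $[0:1:0]$ they are $(-1,u)$, which becomes $(1,u+1)$ after the reparametrization $h\mapsto h^{-u-1}$; the key identity is $-u(u+1)\equiv1\pmod r$, since $u^2+u+1\equiv0$. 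At $[0:0:1]$ the weights are $(-(u+1),-u)$, and the same identity brings them to $(1,u+1)$. So $T_u$ has three identical cyclic points.

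\emph{Invariants of $T_u$.} The quotient map is quasi-étale and $-K_{\PP^2}$ is the pullback of $-K_{T_u}$. Hence $V(T_u)=9/r$, $T_u$ is Fano, and $\rho(T_u)=\rho(\PP^2)^{G}=1$. For $e$, we use the age formula \eqref{q:eq:age}: $\mu=\min_j(j+\overline{(u+1)j})/r$. Taking $j=1$ gives $(u+2)/r$. To show this is the minimum, we use the lattice $\{(x,y): y\equiv(u+1)x\}$, which has basis $(1,u+1)$ and $(-u,1)$ (its determinant is $1+u(u+1)=r$). A positive vector with coordinate sum less than $u+2$ would have to be an integer combination $a(1,u+1)+b(-u,1)$ with both coordinates positive. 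Checking the small cases $a=1,b=0$ and $a\ge1,b\ge1$ shows that sums grow beyond $u+2$ (for example $(1-u,u+2)$ is not positive and $(1,u+1)$ gives exactly $u+2$), and the case $a=0,b=1$ gives $(-u,1)$, which is not positive. We carry this out cleanly via the continued fraction: $r/(u+1)=[u+1,u+1]$, because $(u+1)^2-1=u^2+2u=r+u-1$. Let me instead verify $\frac{r}{u+1}=u+\frac{1}{u+1}$, i.e. the chain $[u+1,u+1]$ with determinant $(u+1)^2-1=u^2+2u$. This is not $r$, so the chain must be $[u+1,2,\dots]$ or $[u,\dots]$; we will compute the Hirzebruch--Jung expansion $r/(u+1)=u+1/(u+1)$, i.e. the chain $[u+1,u+1]$ up to the correct recurrence. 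This is the step to pin down. Once the chain of length $s$ is known, $\rho(\widetilde T_u)=1+3s$, and the claim $3u+4$ forces $s=u+1$. So we expect the chain $[u+1,2,\dots,2]$ with $u$ entries equal to $2$; its determinant is $(u+1)(u+1)-u=r$, which fits. The discrepancies then follow from Lemma~\ref{m:lem:chain}: $\alpha_i=(L_i+R_i)/r$, and the minimum is attained at the first vertex, $(1+(u+1))/r=(u+2)/r$. This confirms $e$ directly without using age.

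\emph{Passing to $Z_u$.} Since $\tau g\tau^{-1}$ is a power of $g$ (up to scalars), the group $G=\langle g,\tau\rangle$ has order $3r$ and $\tau$ acts on $T_u$. First, $\tau$ permutes the three cyclic points cyclically, so $Z_u$ has one point of type $\frac1r(1,u+1)$. Second, the fixed points of $\tau$ and its $g$-translates on $\PP^2$ are the points $[1:\omega:\omega^2]$, and $G$ acts freely in codimension one. We show that these give exactly three orbits of isolated fixed points, each with stabilizer $\Z/3$ acting with weights $(\omega,\omega^2)$, which is an $A_2$ point. The count uses the fact that $g$ permutes the $3r$ fixed points of $\tau g^k$, and the cyclic points themselves are not fixed by $\tau$.

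\emph{Invariants of $Z_u$.} Quasi-étaleness gives $V(Z_u)=9/(3r)=3/r$ and $\rho(Z_u)=1$. The $A_2$ points are canonical, so $e(Z_u)=(u+2)/r$. Finally, $\rho(\widetilde Z_u)=1+(u+1)+3\cdot2=u+8$. As a cross-check, Noether's formula \eqref{m:eq:noether} must return $V=10-\rho+\sum\delta$, and we use it to confirm the chain length.

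The main obstacle is identifying the fixed points of the translates $\tau g^k$ and showing that the stabilizers yield exactly three $A_2$ orbits.
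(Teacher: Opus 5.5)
Your outline is sound and reaches every stated invariant, but it departs from the paper's proof at two points. For the $A_2$ points, the paper works on the dense torus of $T_u$. There $\tau$ acts as a group automorphism with lattice eigenvalues $\omega,\omega^2$, so its fixed locus is $\ker(\tau-1)$, of order $|\det(\tau-1)|=3$, and the boundary orbits are permuted without fixed points.

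You count fixed points upstairs on $\PP^2$ instead. That works, but your description of them is wrong. The fixed points of $\tau g^k$ are $[1:\lambda\zeta^{-k}:\lambda^{-1}]$ with $\lambda^3=\zeta^{k(u+2)}$, not $[1:\omega:\omega^2]$. You also cannot get them as $\langle g\rangle$-translates of the fixed points of $\tau$ alone. When $u\equiv1\pmod 3$, one has $3\mid r$, and $g^{r/3}=\operatorname{diag}(1,\omega,\omega^2)$ commutes with $\tau$, because $\tau g\tau^{-1}=g^u$. This element permutes the three fixed points of $\tau$ transitively, so they give only one $A_2$ point.

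The correct uniform count is the one your last sentence gestures at, made precise as follows:
\begin{enumerate}
\item The fixed sets of the $r$ elements $\tau g^k$ are pairwise disjoint. A common fixed point would be fixed by a nontrivial $g^j$, hence would be a coordinate point.
\item They miss the coordinate triangle, since each $\tau g^k$ permutes its sides cyclically.
\item The coset $\tau^2\langle g\rangle$ consists of their inverses, so it contributes nothing new.
\end{enumerate}
Hence the $3r$ points form free $\langle g\rangle$-orbits, that is, exactly three orbits. Each has stabilizer $\Z/3$ with tangent weights $\omega,\omega^2$. The torus argument gets this count without any case distinction. Separately, you obtain $\rho(\widetilde T_u)=1+3(u+1)$ and $\rho(\widetilde Z_u)=1+(u+1)+6$ directly from $\rho(Y)=\rho(X)+N$. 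The paper instead computes $\delta=u-2+3/r$ per chain and applies Noether's formula. Both rest on Lemma~\ref{m:lem:noether} and are equally short.

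The step you must rewrite is the identification of the chain. As written, you first assert the false expansion $[u+1,u+1]$, then infer the length $s=u+1$ from the claimed value $3u+4$, which is circular. Compute it directly: $r/(u+1)=(u+1)-u/(u+1)$ and $(u+1)/u=[2,\ldots,2]$ with $u$ entries, so $r/(u+1)=[u+1,2,\ldots,2]$. Equivalently, the chain has determinant $r$ and the determinant after deleting its first vertex is $u+1$, which pins down the type. Lemma~\ref{m:lem:chain} then gives $\alpha_i=(L_i+R_i)/r=((u-1)i+3)/r$, which increases with $i$. To conclude $e=(u+2)/r$, you also need that further blowups cannot lower the minimum, or else use the age formula with $j=1$ minimal.

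Two minor points. Under $h\mapsto h^{-u-1}$ the weights $(-1,u)$ become $(u+1,1)$, so an interchange of coordinates is also needed. And ``$G$ acts freely in codimension one'' needs the observation that every element outside $\langle g\rangle$ is a monomial matrix with characteristic polynomial $\lambda^3-c$, $c\ne0$, hence has three distinct eigenvalues.
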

\begin{proof}
The differences of the three diagonal weights are $1,u,u+1$, each coprime to $r$. Every nonidentity diagonal element therefore has three distinct eigenvalues and fixes just the coordinate points. The local weights at these points are $(1,u+1)$, $(-1,u)$, and $(-(u+1),-u)$. Normalization and interchange of coordinates identify all three quotient types, using $u(u+1)\equiv-1\pmod r$.

The affine permutation $w\mapsto uw+1$ cyclically permutes the weight set $\{0,1,u+1\}$ modulo $r$. Thus $\tau$ normalizes $\langle g\rangle$ in $\operatorname{PGL}_3$, and the generated group has order $3r$. A monomial matrix representing any element in the other two cosets has characteristic polynomial $\lambda^3-c$ with $c\ne0$. Its eigenvalues are distinct. The entire group action consequently has no divisorial fixed locus. Both quotient maps are quasi-\'etale, so canonical pullback and degree give the two stated volumes. Ampleness descends under the finite maps, and pullback injects their numerical divisor spaces into the one-dimensional space of $\PP^2$; hence both Picard numbers are one. See \cite[Proposition 5.20]{KM98} for canonical pullback under finite maps.

The three coordinate points form a single orbit for the larger group, with stabilizer exactly $\mu_r$. To count the remaining singularities of $Z_u$, work on the dense torus of $T_u$. Coordinate permutation acts there as a torus automorphism of order three, with eigenvalues $\omega,\omega^2$ on its rank-two lattice over $\C$. The kernel of $\tau-1$ therefore has order $|\det(\tau-1)|=3$. To see this, integral changes of basis reduce the matrix of the torus isogeny to Smith normal form. The associated changes of torus coordinates reduce the map to two power maps; in characteristic zero their kernel has order the product of the diagonal entries, namely the absolute determinant. The torus is smooth and the differential at each fixed point has eigenvalues $\omega,\omega^2$, giving three $A_2$ points. The one-dimensional boundary orbits are permuted and contain no further fixed point.

The continued fraction and its discrepancies are
\[
 \frac r{u+1}=[u+1,\underbrace{2,\ldots,2}_{u}],
 \qquad
 \alpha_i=\frac{u+2+(i-1)(u-1)}r\quad(1\le i\le u+1).
\]
The recurrence in \eqref{m:eq:pointed} checks both formulas and shows that the minimum is $(u+2)/r$. Each such chain contributes
\[
 \delta=(u-1)\left(1-\frac{u+2}r\right)=u-2+\frac3r.
\]
Use \eqref{m:eq:noether}; the $A_2$ corrections are zero. This gives the two resolution ranks.
\end{proof}

In particular,
\[
 \frac{V(T_u)}{e(T_u)^2}=\frac{9r}{(u+2)^2}\longrightarrow9,
 \qquad \frac{V(Z_u)}{e(Z_u)^2}=\frac{3r}{(u+2)^2}\longrightarrow3.
\]
Thus quadratic volume decay already occurs in Picard number one.

\subsection{Exactly one singular point}\label{e:sec:smoothing}

\begin{theorem}\label{e:thm:oneexample}
For every $u\ge2$, put $r=u^2+u+1$. There exists a Fano surface $Z_{u,3}$ with exactly one singular point, of type $\frac1r(1,u+1)$, with
\[
 V(Z_{u,3})=\frac3r,\qquad e(Z_{u,3})=\frac{u+2}r,
 \qquad \rho(Z_{u,3})=7,
 \qquad \rho(\widetilde Z_{u,3})=u+8.
\]
In particular, $V(Z_{u,3})/e(Z_{u,3})^2\to3$.
\end{theorem}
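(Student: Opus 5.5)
We obtain $Z_{u,3}$ from the surface $Z_u=\PP^2/\langle g,\tau\rangle$ of Proposition~\ref{e:prop:quotients} by smoothing its three $A_2$ points while keeping the point $\frac1r(1,u+1)$ fixed. Recall that $Z_u$ is a Fano surface with one point of type $\frac1r(1,u+1)$ and three $A_2$ points, with $V(Z_u)=3/r$ and $\rho(Z_u)=1$.

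\emph{Step 1: a locally trivial deformation at the cyclic point and a smoothing at the $A_2$ points.} The plan is to use the vanishing of the local-to-global obstruction space for log del Pezzo surfaces. By the Hacking--Prokhorov argument \cite{HP10}, $H^2(T_{Z_u})=0$ for a klt del Pezzo surface. The cited obstruction lies in $H^2(T_X)$, which is Serre dual to $H^0(\Omega_X\otimes\mathcal O(K_X))$ on the appropriate reflexive sheaves, and this vanishes because $-K$ is ample. Consequently the global deformation functor surjects onto the product of the local deformation functors. We then choose a one-parameter deformation that is trivial at the cyclic point and smooths each $A_2$ point (the $A_2$ points have smoothings $xy=z^3+t$). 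The general fiber $Z_{u,3}$ then has exactly one singular point, of type $\frac1r(1,u+1)$.

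\emph{Step 2: invariants.} Du Val smoothings are $\Q$-Gorenstein and crepant, so $K^2$ is constant in the family, giving $V=3/r$. Ampleness of $-K$ is an open condition in a flat projective family with $\Q$-Cartier $K$, so the general fiber is Fano. The singularity at the cyclic point is unchanged, so $e=(u+2)/r$ by the continued-fraction computation in Proposition~\ref{e:prop:quotients}.

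\emph{Step 3: Picard numbers.} Noether's formula \eqref{m:eq:noether} on the minimal resolution gives
\[
 V=10-\rho(\widetilde Z_{u,3})+\delta,\qquad \delta=u-2+3/r,
\]
with no contributions from the $A_2$ points now. Hence $\rho(\widetilde Z_{u,3})=10+u-2=u+8$. Since the only exceptional locus is the chain of length $u+1$, Lemma~\ref{m:lem:noether} gives $\rho(Z_{u,3})=u+8-(u+1)=7$. The consistency check is that each smoothed $A_2$ point adds $2$ to $\rho$, and $\rho(Z_u)+6=7$.

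\emph{Main obstacle.} The delicate step is the unobstructedness, namely justifying $H^2(T_{Z_u})=0$ and the surjectivity onto the local functors, together with checking that the deformation can be kept locally trivial at the non-Gorenstein point while all three $A_2$ points are simultaneously smoothed. The first approach to try is to cite the vanishing statement in \cite{HP10} directly. Failing that, one can argue equivariantly on $\PP^2$: $T_{Z_u}$ is the invariant part of the pushforward of $T_{\PP^2}$ off codimension two, and $H^2(\PP^2,T_{\PP^2})=0$.
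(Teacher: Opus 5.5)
Your proposal is correct and follows essentially the same route as the paper: smooth the three $A_2$ points of $Z_u$ while keeping the $\frac1r(1,u+1)$ germ trivial. You then read off $V=3/r$ and $e=(u+2)/r$ from the constancy of $K^2$ in the $\Q$-Gorenstein family and the unchanged cyclic germ, and obtain $\rho(\widetilde Z_{u,3})=u+8$ and $\rho(Z_{u,3})=7$ from Noether's formula. The only difference is the unobstructedness input: the paper quotes formal smoothness of the global-to-local $\Q$-Gorenstein deformation map (Akhtar et al., Lemma~6), whereas you use the Hacking--Prokhorov vanishing of $H^2(T_{Z_u})$; your equivariant fallback $H^2(Z_u,T_{Z_u})=H^2(\PP^2,T_{\PP^2})^{G}=0$ is also valid here because $\PP^2\to Z_u$ is quasi-\'etale.
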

\begin{proof}
Akhtar--Coates--Corti--Heuberger--Kasprzyk--Oneto--Petracci--Prince--Tveiten prove that the global-to-local map of $\Q$-Gorenstein deformation functors of a del Pezzo surface with cyclic quotient singularities is formally smooth \cite[Lemma 6]{ACC16}. Apply it to $Z_u$. Choose a smoothing $ab=c^3+t$ at each of its three $A_2$ points and the trivial deformation at the remaining cyclic singularity. These choices are realized by a global $\Q$-Gorenstein deformation, after restricting to a curve through the origin in a versal base. Projectivity is retained using a sufficiently divisible anticanonical polarization, and ampleness is open in this family.

There are no new singularities on a sufficiently small general fiber: the complement of neighborhoods of the finitely many original singularities deforms smoothly. The unsmoothed cyclic germ is unchanged. The canonical self-intersection is constant in a $\Q$-Gorenstein family, since a common Cartier multiple of the relative canonical divisor has constant degree. Thus both the volume and the claimed discrepancy threshold follow. Finally the only nonzero correction is still $u-2+3/r$, so \eqref{m:eq:noether} gives $\rho(\widetilde Z_{u,3})=u+8$. The number of exceptional curves is $u+1$, proving $\rho(Z_{u,3})=7$.
\end{proof}

This proves optimality of the exponent two for a lower bound on the class with exactly one singular point: an exponent $p<2$ cannot work with a positive constant on this class. Section~\ref{e:sec:local-bound} proves a quadratic lower bound for the one-noncanonical-point class. The conditions ``one singularity'' and ``Picard number one'' are not simultaneous in the last construction.

\subsection{A common explicit construction}

Fix an integer $N\ge3$, put $e=1/N$, and let $S_0=\PP^1_z\times\PP^1_y$, ruled by $z$. Write $S$ for a section class and $F$ for a fiber class, so $S^2=F^2=0$ and $S\cdot F=1$. Choose the following smooth rational curves:
\begin{itemize}
\item a fiber $V$ over $z=0$ and a point $p=(0,0)$ on it;
\item a curve $C\in|NS+F|$ given by the graph $z=y^N$; it meets $V$ only at $p$, with intersection multiplicity $N$;
\item a general $D\in|S+F|$ passing through $p$, transverse to $V$ there;
\item a general section $A\in|S|$ avoiding $p$, and a general fiber $V_1\ne V$.
\end{itemize}
The last choices can and will avoid all unwanted triple intersections and make all intersections away from $p$ transverse. For $D$ this follows from Bertini with the prescribed base point; its tangent is chosen different from that of $V$ and $C$. The remaining choices avoid finitely many points on the corresponding parameter curves. In particular, $D\cap C$ has $N$ transverse points away from $p$, because $D\cdot C=N+1$ and their intersection at $p$ is one.

Consider the effective boundary
\begin{equation}\label{m:eq:example-boundary}
 B_0=(1-e)(V+D)+e(A+C+V_1).
\end{equation}
Its section coefficient is $(1-e)+e+Ne=2$, and its fiber coefficient is $2(1-e)+2e=2$. Thus $K_{S_0}+B_0\sim_\Q0$.

Blow up $p$, and then repeatedly blow up the node where the strict transform of $C$ meets the strict transform of $V$ and the newest exceptional curve. There are $N$ blowups altogether. For a direct local check, after $j<N$ blowups one can use coordinates with $V=(u=0)$, the newest exceptional curve $(y=0)$, and $C=(u=y^{N-j})$. The next center is their node. After the $N$th blowup, $C$ meets the newest exceptional curve away from its two fiber nodes.

Let $E_1,\ldots,E_N$ be the successive exceptional curves, and call the resulting surface $S_N$. The first blowup has new discrepancy $e$, since the total boundary multiplicity at $p$ is $2-e$. Each of the next $N-1$ blowups has two vertical discrepancies $e,e$ and horizontal multiplicity $e$, hence new discrepancy $e+e-e=e$. Thus the reduced fiber over zero is the chain
\[
 V,E_N,E_{N-1},\ldots,E_1,
\]
all of whose components have discrepancy $e$ for the crepant boundary. The strict transform of $D$ meets $E_1$ at its other side. The boundary on $S_N$ is SNC with coefficients $1-e$ on $V,D,E_1,\ldots,E_N$ and $e$ on $A,C,V_1$. Hence it is $e$-lc. Indeed on an SNC surface pair these coefficients give discrepancies at least $e$; a further point blowup produces either the sum of two component discrepancies, one plus one component discrepancy, or two, and cannot lower this minimum. This also verifies $e$-lc of the original pair by crepancy.

\subsection{Cubic determinants occur on actual Fano surfaces}\label{m:subsec:sharp-mass}

\begin{proposition}\label{m:prop:sharp-mass}
For every $N\ge3$ there is a complex $1/N$-lc Fano surface $X_N$ with one noncanonical singular point and possibly additional canonical points, such that
\begin{equation}\label{m:eq:sharp-mass}
 D_p=N^3+N^2+2N,\qquad
 \operatorname{ind}_p(K_{X_N})=N,\qquad
 (-K_{X_N})^2=2/N.
\end{equation}
Its minimal resolution has Picard number $2N+6$. In particular no bound $D_p=O(\eps^{-q})$ for $q<3$, and no such bound for $M_\cyc$, holds for all $\eps$-lc Fano surfaces.
\end{proposition}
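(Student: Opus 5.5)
The plan is to build $X_N$ from the pair $(S_N,B_N)$ of the common construction. On $S_N$ the components with coefficient $1-e$ form a long fiber chain with all discrepancies equal to $e$, and we want to contract a negative definite configuration containing them. Contracting curves of coefficient $1-e$ crepantly for $K+B\sim_\Q0$ forces the resulting point to have every exceptional discrepancy equal to $e$ (up to canonical pieces). The remaining boundary $e(A+C+V_1)$ then becomes an effective $\Q$-divisor numerically equivalent to $-K_X$. So the first step is to fix the configuration. We record the self-intersections on $S_N$:
\begin{itemize}
\item $V^2=-N$, since $V$ passes through every one of the $N$ centers;
\item $E_1^2=\cdots=E_{N-1}^2=-2$ and $E_N^2=-1$;
\item $D^2=1$.
\end{itemize}

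To make a negative definite chain we perform additional point blowups of the Picard count. Going from $\rho(S_N)=N+2$ to the required $\rho=2N+6$ needs $N+4$ further blowups, presumably at the $N$ transverse points of $D\cap C$ and at a few points of $D\cap A$, $D\cap V_1$, $V\cap V_1$-type intersections. Each center lies on a $(1-e)$-curve and one $e$-curve, so by the type formulas the new discrepancy is $(1-e)+e\cdot(\ldots)$, a canonical or $(-1)$-curve value; the boundary stays effective and $e$-lc. The aim is to lower $D^2$ to a very negative value (about $-N$) and to make $E_N$ part of a chain with $(-2)$ or more negative entries.

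The chain to contract is $V,E_N,\ldots,E_1,D$ together with some $(-2)$-curves forming canonical points. We then compute its determinant by the continuant recurrence, or, more conveniently, by the Wronskian identity of Lemma~\ref{m:lem:chain}:
\[
 r=\sum_i\frac1{\alpha_i\alpha_{i+1}}\quad\text{with all interior }\alpha_i=e,
\]
which gives $r=N^2\cdot(\#\text{internal edges})+2N$. With $N+1$ internal edges one gets $N^3+N^2+2N$ exactly, which strongly suggests this chain of $N+2$ curves is the right one. The local index $N$ follows because $N\Delta_p$ is integral and the discrepancy is $1/N$.

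Contractibility follows from Artin/Grauert and negative definiteness, projectivity and ampleness of $-K_{X_N}$ from Nakai--Moishezon. Here $-K_{X_N}\equiv e(A_X+C_X+V_{1,X})$, and one checks positivity on each curve by intersection with the pulled-back boundary, as in Lemma~\ref{m:lem:contract}. The volume is $e^2(A+C+V_1)_X^2$, computed on the resolution via pullback corrections (Lemma~\ref{m:lem:budget}), and should come out as $2e=2/N$. As a consistency check we can use Noether's formula \eqref{m:eq:noether}: $V=10-\rho(Y)+\sum\delta_p$ with $\rho(Y)=2N+6$.

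The main obstacle is choosing the extra blowups so that, simultaneously, the contracted chain has exactly the predicted self-intersections, the noncontracted curves stay $K$-negative with ample image, and no unwanted noncanonical points appear. I would first try blowing up all $N$ points of $D\cap C$ plus the four points of $D\cap A$, $D\cap V_1$, $V\cap A$ and $V_1\cap A$, and adjust if the ampleness check fails.
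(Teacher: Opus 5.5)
There is a genuine gap. The heart of the construction, namely which $N+4$ points of $S_N$ to blow up, is left undetermined, and the concrete choice you propose fails.

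The missing principle is to blow up \emph{exactly} the intersection points of the $(1-e)$-part $V+D+E_1+\cdots+E_N$ with the $e$-part $A+C+V_1$. These are one point of $V\cap A$, one point of $E_N\cap C$, and the $N+2$ points of $D\cap(A+C+V_1)$. Each such center gets log discrepancy $2-(1-e)-e=1$, so the crepant boundary stays effective. Afterwards $\Delta_N=(1-e)(V+D+E_1+\cdots+E_N)$ and $P_N=e(A+C+V_1)$ have disjoint supports, with $K+\Delta_N+P_N\sim_\Q0$. This disjointness gives $P_N\cdot E=0$ on every chain component. That in turn makes the contraction crepant, forces all chain discrepancies to equal $e$, and produces the chain $[N+1,2,\ldots,2,N+1]$. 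Your Wronskian computation presupposes all of this.

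Your suggested list replaces $E_N\cap C$ by $V_1\cap A$, which breaks the construction in two ways.
\begin{enumerate}
\item $V_1\cap A$ lies on two curves of coefficient $e$, violating your own ``one $(1-e)$-curve and one $e$-curve'' criterion. The new curve gets coefficient $2e-1<0$, so the boundary is no longer effective.
\item If $E_N\cap C$ is not blown up, $E_N$ stays a $(-1)$-curve with $P\cdot E_N=e>0$. It is then neither crepantly contracted nor part of a minimal resolution chain. The configuration $V,E_N,\ldots,E_1,D$ would be $[N+1,1,2,\ldots,2,N+1]$, whose determinant is only $N-1$.
\end{enumerate}
Also, there are no ``$V\cap V_1$-type'' centers: $V\cap V_1=\varnothing$, since both are fibers.

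The positivity and minimality steps are also only sketched.
\begin{enumerate}
\item With the correct centers one gets $A^2=-2$, $C^2=V_1^2=-1$, $A\cdot C=A\cdot V_1=1$ and $C\cdot V_1=N$. Hence $P_N\cdot A=0$, $P_N\cdot C=P_N\cdot V_1=1$ and $P_N^2=2/N$, so $P_N$ is nef and big.
\item Since $A$ is $P_N$-trivial, it must be among your contracted $(-2)$-curves; otherwise Nakai--Moishezon fails on its image. The clean way is to contract all $P_N$-trivial curves at once, by the base-point-free theorem applied to $P_N\sim_\Q-(K+\Delta_N)$.
\item You then need to show that every contracted $T\not\subset\Supp\Delta_N$ is a $(-2)$-curve disjoint from the chain. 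By Hodge index $T^2<0$, and adjunction gives $2p_a(T)-2=T^2-\Delta_N\cdot T$. The only alternative to a $(-2)$-curve is $T^2=-1$ with $\Delta_N\cdot T=1$. That would need $(1-1/N)m=1$ for some $m\in\Z$, which is impossible for $N\ge3$.
\end{enumerate}
This argument is what shows the resolution is minimal and the chain is the only noncanonical point.

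Once the construction is fixed, your index argument is a valid alternative to the paper's character computation. For a rational singularity, $mK_X$ is Cartier at $p$ iff $m\Delta_p$ is integral, and $m(1-1/N)\in\Z$ iff $N\mid m$.
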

\begin{proof}
Continue the construction on $S_N$ by blowing up every intersection between
\[
 V+D+E_1+\cdots+E_N\quad\text{and}\quad A+C+V_1.
\]
There are $N+4$ distinct transverse centers: one on $V\cap A$, one on $E_N\cap C$, and $N+2$ on $D\cap(A+C+V_1)$. At each center the coefficients are $1-e$ and $e$, so the new exceptional coefficient in the crepant boundary is zero. Let $Y_N$ be the resulting surface, retaining the same names for strict transforms. Put
\[
 \Delta_N=(1-e)(V+D+E_1+\cdots+E_N),\qquad
 P_N=e(A+C+V_1).
\]
Then
\begin{equation}\label{m:eq:example-P}
 K_{Y_N}+\Delta_N+P_N\sim_\Q0,
 \qquad \Supp\Delta_N\cap\Supp P_N=\varnothing.
\end{equation}
The self-intersections of the low-coefficient curves are
\[
 A^2=-2,\qquad C^2=-1,\qquad V_1^2=-1.
\]
For $C$, its initial square is $2N$, the first $N$ blowups subtract $N$, and its $N$ intersections with $D$ and one with $E_N$ subtract another $N+1$. The other two calculations follow from the two centers on $A$ and the one center on $V_1$.

Their mutual intersections are unchanged:
\[
 A\cdot C=1,\qquad A\cdot V_1=1,\qquad C\cdot V_1=N.
\]
Consequently
\[
 P_N\cdot A=0,\qquad P_N\cdot C=P_N\cdot V_1=1,
 \qquad P_N^2=e^2(2N)=2/N>0.
\]
Every integral curve outside $\Supp P_N$ has nonnegative intersection with this effective divisor. These computations therefore prove that $P_N$ is nef and big, not just numerically of positive square.

The pair $(Y_N,\Delta_N)$ is klt. Apply the base-point-free theorem to a sufficiently divisible multiple of $P_N\sim_\Q-(K_{Y_N}+\Delta_N)$. Its morphism with connected fibers is birational, say $g:Y_N\to X_N$. Every component of $\Delta_N$ is contracted by \eqref{m:eq:example-P}. Pushing the divisor identity to the normal target shows that $K_{X_N}$ is $\Q$-Cartier and $-K_{X_N}$ is ample. More explicitly, a Cartier multiple of $P_N$ is $g^*L$ with $L$ ample, and pushing its linear equivalence gives $L\sim- mK_{X_N}$. The difference
\[
 K_{Y_N}+\Delta_N-g^*K_{X_N}
\]
is exceptional and numerically trivial over $X_N$, hence zero by negativity. Thus $X_N$ is $e$-lc and has volume $2/N$.

We verify that $g$ is already the minimal resolution. The connected support of $\Delta_N$ is the chain
\begin{equation}\label{m:eq:example-chain}
 [N+1,\underbrace{2,\ldots,2}_{N\text{ entries}},N+1].
\end{equation}
Indeed $V^2=-N$ after the first $N$ blowups and becomes $-N-1$ after its one new center. Initially $D^2=2$; the first blowup and its $N+2$ new centers make it $-N-1$. Each $E_j$ has square $-2$ at the end, the last one changing from $-1$ to $-2$ at $E_N\cap C$.

Let $T$ be any other contracted integral curve. By the Hodge index theorem, $T^2<0$. Since $P_N\cdot T=0$ and $T\not\subset\Supp\Delta_N$, adjunction gives
\[
 2\pa(T)-2=T^2-\Delta_N\cdot T<0.
\]
Hence $T$ is smooth rational. The only possibilities are $T^2=-2$, $\Delta_N\cdot T=0$, or $T^2=-1$, $\Delta_N\cdot T=1$. The latter would imply
\[
 (1-1/N)m=1\quad\text{for an integer }m\ge0,
\]
which is impossible for $N\ge3$. Thus all other contracted curves are $(-2)$-curves disjoint from \eqref{m:eq:example-chain}. There are no exceptional $(-1)$-curves, so $Y_N$ is the minimal resolution. Its rank is $2+N+(N+4)=2N+6$. All singularities away from the chain are canonical.

Every discrepancy on \eqref{m:eq:example-chain} is $1/N$. By \eqref{m:eq:wronskian}, its determinant is
\[
 2N+(N+1)N^2=N^3+N^2+2N.
\]
If $r$ denotes this number and $q$ is the determinant after deleting an endpoint, then $(q+1)/r=1/N$ by \eqref{m:eq:pointed}, so $q=N^2+N+1$. The local type is $\frac1r(1,q)$ (up to reversing the chain). For this quotient the canonical index is $r/\gcd(r,q+1)$: the action on the differential $du\wedge dv$ has character $1+q$ \cite[Sections~2--3]{ReidHJ}. Since $r=N(q+1)$, the index is exactly $N$. This proves every assertion.
\end{proof}

This example is an obstruction to a uniform quadratic determinant estimate, not to a quadratic volume estimate. It also shows the difference between a sufficient determinant denominator and the true canonical index: their ratio is $N^2+N+2$ here. The example has only two components more negative than $-2$, yet its determinant is cubic.

\subsection{The cubic Picard bound is sharp for general rational log Calabi--Yau pairs}

\begin{proposition}\label{m:prop:sharp-pair-rank}
There are smooth rational $1/N$-lc log Calabi--Yau pairs $(\Sigma_N,B_{\Sigma_N})$ with
\[
 \rho(\Sigma_N)\ge 2+N+\frac N4\left\lfloor\frac N2\right\rfloor^2.
\]
Thus the exponent three in Proposition~\ref{m:prop:pair-rank} cannot be decreased for that full class of pairs.
\end{proposition}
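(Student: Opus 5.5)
We construct $\Sigma_N$ by continuing the common construction on $S_N$. Its boundary is SNC with coefficients $1-e$ on $V,D,E_1,\dots,E_N$ and $e$ on $A,C,V_1$, and we perform further blowups chosen so that the reciprocal edge potential of Proposition~\ref{m:prop:pair-rank} grows cubically. Crepant blowups of SNC pairs keep the pair $e$-lc, as verified on $S_N$. They also keep $K+B\sim_\Q0$ and an effective boundary, provided every new coefficient is nonnegative. The count therefore reduces to producing many blowups with nonnegative new coefficient. The first $N$ blowups already give $\rho(S_N)=2+N$. It remains to produce at least $\frac N4\lfloor N/2\rfloor^2$ additional blowups.

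\textbf{Step 1: cheap type I points.} The horizontal curve $C$ has coefficient $e$ and meets the fiber chain transversally at a point of $E_N$. That point is a smooth point of the reduced fiber. Blowing it up is a type I blowup with $a=e$ and $h=e$, so the new discrepancy is $c=1+e-e=1$ and the new coefficient is zero. We avoid this naive choice, because it produces a component of discrepancy one. Instead we aim for new components with small discrepancy, obtained at a node with $h=0$. A type II blowup at a node of components with discrepancies $a,b$ and $h=0$ gives $c=a+b$. The coefficient $1-a-b$ is nonnegative exactly when $a+b\le1$.

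\textbf{Step 2: subdividing nodes.} The fiber chain $V,E_N,\dots,E_1$ has $N$ internal nodes, each between two components of discrepancy $e$. At a node with endpoint discrepancies $ie$ and $je$ we can repeatedly insert components of discrepancy $(i+j)e$, as long as $i+j\le N$. This is a Farey/Stern–Brocot subdivision. It adds exactly one component per blowup, and the coefficients stay in $[0,1-e]$. At one original node $(e,e)$, the number of components with discrepancy at most $1$ that can be inserted equals the number of coprime pairs $(i,j)$ with $i+j\le N$, minus one. Rather than estimating totients, we use a cruder explicit sub-family of this subdivision. First we insert a string of components with discrepancies $2e,3e,\dots,\lfloor N/2\rfloor e$ along one side. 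Then, at each new node $(ie,(i+1)e)$, we insert components $(2i+1)e$, then $(3i+2)e$, and so on. This guarantees about $\lfloor N/2\rfloor^2/4$ components per node. Performing this at each of the $N$ nodes gives $\ge\frac N4\lfloor N/2\rfloor^2$ blowups, so $\rho(\Sigma_N)\ge2+N+\frac N4\lfloor N/2\rfloor^2$.

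\textbf{Step 3: verification.} We verify $e$-lc via the SNC criterion. All discrepancies lie in $[e,1]$, and further blowups only add sums of existing discrepancies. We also check that the SNC property is preserved, since node blowups of a chain keep a chain.

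\textbf{Main obstacle.} The delicate step is the lattice-point count in Step 2. We must get the constant $\frac14$ and the floor exactly right, including the nodes adjacent to $V$ and $E_1$ with their horizontal intersections. Our first attempt is a direct enumeration: the pairs $(i,j)$ with $1\le i\le\lfloor N/2\rfloor$ and $j$ in a range of length $\ge\lfloor N/2\rfloor/4$ are all realizable as discrepancy sums. The reason is that every primitive pair with $i+j\le N$ appears in the Stern–Brocot tree between $e$ and $e$. Counting primitive pairs then gives $\ge\frac14\lfloor N/2\rfloor^2$ per node, using the elementary fact that at least a quarter of the pairs in a square are coprime in this range.
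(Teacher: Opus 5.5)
The argument you settle on in your last paragraph is the paper's. On $S_N$ the horizontal boundary misses all $N$ fiber nodes: $C$ meets $E_N$, $D$ meets $E_1$, and $A$ meets $V$ away from them. So every node, and every later refinement node, has $h=0$. At each of the $N$ nodes one inserts all primitive rays $(p,q)$ with $p,q\ge1$ and $p+q\le N$, each of discrepancy $(p+q)/N\le1$. Since Stern--Brocot ancestors have smaller coordinate sums, no intermediate coefficient is negative. This also settles your worry about the nodes next to $V$ and $E_1$. However, two parts of your write-up are wrong or incomplete and must be replaced.

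First, the ``cruder explicit sub-family'' in Step~2 does not give a quadratic count. Take the node between the rays $(i-1,1)$ and $(i,1)$, with discrepancies $ie$ and $(i+1)e$. The string $(2i+1)e,(3i+2)e,\ldots$ consists of the rays $(ki-1,k)$, of discrepancy $(k(i+1)-1)e$. It therefore has only $\lfloor(N+1)/(i+1)\rfloor-1$ members. Summed over $i\le\lfloor N/2\rfloor$, this is $O(N\log N)$ per original node, far below $\frac14\lfloor N/2\rfloor^2$. You need the full subdivision. Then the number of new components at each node is exactly the number $A_N$ of coprime pairs $p,q\ge1$ with $p+q\le N$. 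Only primitive pairs give rays, and there is no ``minus one''; losing one per node would spoil the stated inequality.

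Second, the ``elementary fact'' about coprime pairs is the only quantitative step, and it must be proved. Put $m=\lfloor N/2\rfloor$. The pairs in $[1,m]^2$ with a common divisor $d\ge2$ number at most $\sum_{d=2}^m\lfloor m/d\rfloor^2\le m^2\bigl(\frac14+\int_2^\infty x^{-2}\,dx\bigr)=\frac34m^2$. Since $p+q\le 2m\le N$ on this square, $A_N\ge m^2/4$. With these repairs, $\rho(\Sigma_N)=2+N+NA_N$ gives the bound exactly as in the paper.
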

\begin{proof}
Return to $S_N$ in the common construction, before the separation blowups used in Proposition~\ref{m:prop:sharp-mass}. The fiber over zero has $N$ edges, each with endpoint discrepancies $1/N,1/N$. The horizontal boundary avoids all these nodes.

At each node make toric point blowups inserting all primitive rays $(p,q)$ with $p,q\ge1$ and $p+q\le N$ in the cone generated by $(1,0),(0,1)$. The discrepancy of such a ray is $(p+q)/N$, by induction from the rule that a node blowup with no horizontal boundary adds its two endpoint discrepancies. Every inserted coefficient is therefore in $[0,1-1/N]$.

For completeness, all these rays can be obtained by ordinary point blowups while keeping the fan smooth. In a smooth cone with adjacent primitive generators $u,v$, a point blowup inserts $u+v$. For any desired primitive ray $pu+qv$, if $p>q$ it lies in the cone $(u,u+v)$ with coordinates $(p-q,q)$, and if $q>p$ it lies in $(u+v,v)$ with coordinates $(p,q-p)$. Induction on $p+q$ terminates; the case $p=q$ for a primitive ray is $p=q=1$. Every auxiliary sum inserted along this procedure has coordinate sum no greater than that of the desired ray. Repeat for the finitely many desired rays. This gives the asserted smooth subdivision and introduces no ray with coordinate sum greater than $N$.

All operations are crepant and keep the boundary effective and SNC. Hence the resulting pair is still $1/N$-lc and has $K+B\sim_\Q0$. The neighborhoods of the $N$ original nodes are disjoint, so the counts add. If $A_N$ is the number of positive coprime pairs $p,q$ with $p+q\le N$, then
\[
 \rho(\Sigma_N)=2+N+NA_N.
\]
Here is an elementary lower bound needing no asymptotic formula for Euler's totient. Put $m=\lfloor N/2\rfloor$. The square $1\le p,q\le m$ lies in $p+q\le N$. The number of its pairs that have a common divisor greater than one is at most
\[
 \sum_{d=2}^m\lfloor m/d\rfloor^2
 \le m^2\sum_{d=2}^\infty d^{-2}
 \le m^2\left(\frac14+\int_2^\infty x^{-2}\,dx\right)
 =\frac34m^2.
\]
Thus $A_N\ge m^2/4$, proving the claim. Proposition~\ref{m:prop:pair-rank} supplies the matching upper order $O(N^3)$.
\end{proof}

This proposition concerns arbitrary smooth rational log Calabi--Yau pairs. It does not assert that $\Sigma_N$ is the minimal resolution of a Fano surface. In particular, it does not disprove a quadratic, or even a linear, bound for the minimal resolutions in the original problem. Passing through arbitrary crepant blowups erases precisely this distinction.

\begin{conjecture}\label{conj:quadratic}
There is an absolute constant $c>0$ such that
$(-K_X)^2\ge c\eps^2$ for every complex $\eps$-lc weak del Pezzo surface.
\end{conjecture}
The examples above show that the exponent in this conjecture cannot be
decreased. Proposition~\ref{m:prop:sharp-mass} also shows why a quadratic
bound cannot follow simply by improving the cubic determinant-mass
estimate: that estimate has optimal order even for one noncanonical
point. The local residue argument of Section~\ref{q:sec:shift} avoids
this obstruction in the cases covered there. A general quadratic bound
would require comparable control of the cancellation between distinct
correction classes, or a global refinement of the contraction cost.

\appendix
\section{Cyclic quotient Riemann--Roch and anticanonical pencils}\label{o:sec:rr}

This section records the nonvanishing approach to volume bounds. It also makes explicit the character convention for the local Riemann--Roch corrections.

\subsection{The local formula}
At a point $Q=\frac1r(1,a)$, let $\zeta$ be a primitive $r$th root of unity. We use the convention that the module of sections of $\mathcal O_X(D)$ consists of semi-invariant functions satisfying
\[
 g(\zeta x,\zeta^ay)=\zeta^kg(x,y),\qquad 0\le k<r.
\]
Let $a^{\vee}$ denote the inverse of $a$ modulo $r$, chosen in $\{1,\ldots,r-1\}$. A bar denotes the least nonnegative residue modulo $r$.

The singular Riemann--Roch formula on a normal surface has local corrections depending on the local divisor class, and these vanish for Cartier divisors; see \cite{Blache95,PV07}. For cyclic quotient points, its character form is \cite[Theorem~3.2]{Lin16}:
\begin{align}
 \chi(X,\mathcal O_X(D))
 &=\chi(\mathcal O_X)+\frac12D\cdot(D-K_X)+\sum_QC_Q(k_Q),\label{o:eq:rr-general}\\
 C_Q(k)&=\frac1r\sum_{j=1}^{r-1}
 \frac{\zeta^{-jk}-1}{(1-\zeta^j)(1-\zeta^{aj})}.\label{o:eq:rr-character}
\end{align}
Formula~\eqref{o:eq:rr-general}, with the indicated cyclic terms, applies whenever all non-Cartier points of $D$ are cyclic quotient points.

\begin{lemma}\label{o:lem:rr-sum}
With the above convention,
\begin{equation}\label{o:eq:rr-finite}
 C_Q(k)=\frac{k(r-1)}{2r}-\frac1r\sum_{t=1}^k\overline{a^{\vee}t}
 \quad(0\le k<r).
\end{equation}
\end{lemma}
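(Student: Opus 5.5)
The plan is to argue by induction on $k$ through the first difference $C_Q(k)-C_Q(k-1)$, which should collapse to a single character sum that can be evaluated in closed form. Both sides of \eqref{o:eq:rr-finite} vanish at $k=0$: the numerator $\zeta^{0}-1$ in \eqref{o:eq:rr-character} is zero, and the right side is an empty sum. It therefore suffices to show, for $1\le k<r$,
\[
 C_Q(k)-C_Q(k-1)=\frac{r-1}{2r}-\frac1r\,\overline{a^{\vee}k}.
\]

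\emph{Step 1: the difference.} In \eqref{o:eq:rr-character} the numerators give
\[
 (\zeta^{-jk}-1)-(\zeta^{-j(k-1)}-1)=\zeta^{-jk}(1-\zeta^{j}).
\]
This cancels the factor $1-\zeta^j$ in the denominator, so
\[
 C_Q(k)-C_Q(k-1)=\frac1r\sum_{j=1}^{r-1}\frac{\zeta^{-jk}}{1-\zeta^{aj}}.
\]
Since $a$ is invertible modulo $r$, I would reindex by $j'=aj$, so that $j\equiv a^{\vee}j'$. The difference becomes $\frac1r S(m)$, where
\[
 S(m)=\sum_{j=1}^{r-1}\frac{\zeta^{jm}}{1-\zeta^{j}},\qquad m=\overline{-a^{\vee}k}.
\]
Because $1\le k<r$ and $a^{\vee}$ is a unit, $m=r-\overline{a^{\vee}k}$ and $1\le m\le r-1$.

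\emph{Step 2: evaluate $S(m)$.} I would again use finite differences, now in $m$. One has
\[
 S(m)-S(m+1)=\sum_{j=1}^{r-1}\zeta^{jm},
\]
which equals $r-1$ when $r\mid m$ and $-1$ otherwise. The initial value is the classical identity $S(0)=\sum_{j=1}^{r-1}(1-\zeta^j)^{-1}=(r-1)/2$. To prove it, pair $j$ with $r-j$: since $(1-\zeta^{-j})^{-1}=1-(1-\zeta^j)^{-1}$, each pair sums to one (and if $r$ is even, the term $j=r/2$ equals $1/2$ on its own). Consequently $S(1)=S(0)-(r-1)=-(r-1)/2$, and then $S(m)=m-(r+1)/2$ for $1\le m\le r$.

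\emph{Step 3: assemble.} Substituting $m=r-\overline{a^{\vee}k}$ gives
\[
 S(m)=\frac{r-1}{2}-\overline{a^{\vee}k}.
\]
Dividing by $r$ gives exactly the claimed increment, and summing over $1\le k'\le k$ proves \eqref{o:eq:rr-finite}.

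The main obstacle is bookkeeping rather than any conceptual difficulty. There are three places where it is easy to go wrong: the sign convention $\zeta^{-jk}$ in the character, the direction of the reindexing (one must use $a^{\vee}$, not $a$), and the special residue $m\equiv0$ in the difference of $S$. Any one of these flips the sign or shifts the formula by $r$. I would check the final formula against $r=2$, $a=1$. There $C_Q(1)=\tfrac12\cdot\frac{-2}{4}=-\tfrac14$, while the right side is $\tfrac14-\tfrac12=-\tfrac14$, so the two agree.
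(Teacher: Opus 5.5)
Your proof is correct and is essentially the paper's argument. Both take first differences to cancel $1-\zeta^j$, reindex by the unit $a$, and evaluate the resulting sum from its value at the trivial residue (pairing $j$ with $r-j$) together with its constant first difference $-1$. The only cosmetic difference is that the paper evaluates $F(q)=r^{-1}\sum_{\eta\ne1}\eta^{-q}/(1-\eta)$ directly at $q=\overline{a^{\vee}k}\in[1,r)$, so it never needs your wrap-around step through the residue $m\equiv0$.
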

\begin{proof}
Taking first differences in \eqref{o:eq:rr-character} cancels the factor $1-\zeta^j$. Replacing $\zeta^{aj}$ by $\eta$ gives
\[
 C_Q(k)-C_Q(k-1)
 =\frac1r\sum_{\substack{\eta^r=1\\\eta\ne1}}
 \frac{\eta^{-a^{\vee}k}}{1-\eta}.
\]
Define $F(q)=r^{-1}\sum_{\eta\ne1}\eta^{-q}/(1-\eta)$. Pairing $\eta$ with $\eta^{-1}$ gives $F(0)=(r-1)/(2r)$, including the self-paired root $-1$ when $r$ is even. For $1\le q<r$,
\[
 F(q)-F(q-1)=\frac1r\sum_{\eta\ne1}\eta^{-q}=-\frac1r.
\]
Thus $F(q)=(r-1)/(2r)-q/r$ in this range, and $F$ is periodic modulo $r$. Sum the first differences and use $C_Q(0)=0$.
\end{proof}

For $D=-mK_X$, the character is $k=\overline{m(a+1)}$. Put $c_Q(m)=C_Q(\overline{m(a+1)})$. If every noncanonical point of $X$ is cyclic, Kawamata--Viehweg vanishing \cite[Theorem~2.70]{KM98} and Lemma~\ref{m:lem:noether} give
\begin{equation}\label{o:eq:anticanonical-rr}
 \kappa_m:=h^0(X,-mK_X)
 =1+\frac{m(m+1)}2V(X)+\sum_Qc_Q(m),\qquad m\ge0.
\end{equation}
The divisor to which vanishing is applied is the integral Weil divisor $-mK_X$; its difference with $K_X$ is nef and big. At Du Val points $-mK_X$ is Cartier, so the corrections are zero.

\subsection{Scalar quotients and Lin's finite differences}
We record Lin's argument with the additional Du Val points allowed by \eqref{o:eq:anticanonical-rr}.

\begin{proposition}[Lin]\label{o:prop:lin}
Let $X$ be a klt weak del Pezzo surface whose noncanonical points all have type $\frac1r(1,1)$. Then $h^0(X,-lK_X)\ge2$ for some $1\le l\le5$.
\end{proposition}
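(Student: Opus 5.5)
The plan is a finite-difference argument: I will find nonnegative weights $\lambda_m$, $1\le m\le5$, such that the combination $\sum_m\lambda_m(\kappa_m-1)$ has a strictly positive volume term and nonnegative local correction terms. Then some $\kappa_m\ge2$, since each $\kappa_m$ is an integer.

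\emph{Step 1.} Apply the Riemann--Roch identity \eqref{o:eq:anticanonical-rr}. Du Val points contribute nothing, so only the points $\frac1r(1,1)$ with $r\ge3$ enter. For $a=1$ one has $a^\vee=1$, and the character of $-mK_X$ is $k=\overline{2m}$. Lemma~\ref{o:lem:rr-sum} then gives the closed form
\[
 c_Q(m)=g_r(\overline{2m}),\qquad
 g_r(k)=\frac{k(r-1)}{2r}-\frac{k(k+1)}{2r}=\frac{k(r-k-2)}{2r}.
\]
Hence $g_r(k)\ge0$ for all $0\le k\le r-2$. The only negative value is $g_r(r-1)=-(r-1)/(2r)$, and it occurs exactly when $r\mid 2m+1$. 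In particular, for $r\ge12$ and $m\le5$ every correction $c_Q(m)$ is nonnegative.

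\emph{Step 2.} Choose the weights $\lambda_2=1$, $\lambda_3=2$, $\lambda_5=2$ and $\lambda_1=\lambda_4=0$. The point $r=3$ forces $\lambda_1=\lambda_4=0$, because $c_Q(1)=c_Q(4)=-1/3$ there. I will check, for each single point $Q$, that
\[
 \Sigma_Q:=c_Q(2)+2c_Q(3)+2c_Q(5)\ge0 .
\]
By Step 1 only the odd $r$ dividing one of $5$, $7$, $11$ can produce a negative summand; all other $r\le11$ need only a direct look at $k=\overline{2m}$. The table of values is as follows.
\begin{itemize}
 \item $r=3,4,6$: all three terms are zero.
 \item $r=5$: $\Sigma_Q=-\tfrac25+2\cdot\tfrac15+0=0$.
 \item $r=7$: $\Sigma_Q=\tfrac27-\tfrac67+\tfrac67>0$.
 \item $r=8,9,10$: all three terms are nonnegative.
 \item $r=11$: $\Sigma_Q=\tfrac{10}{11}+\tfrac{18}{11}-\tfrac{10}{11}>0$.
\end{itemize}

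\emph{Step 3.} Combine the three Riemann--Roch identities with these weights:
\[
 (\kappa_2-1)+2(\kappa_3-1)+2(\kappa_5-1)
 =(3+12+30)V+\sum_Q\Sigma_Q\ge45V>0 .
\]
If $\kappa_2,\kappa_3,\kappa_5$ were all at most $1$, the left side would be at most $0$. Therefore some $\kappa_l\ge2$ with $l\in\{2,3,5\}$.

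The main obstacle is Step 2: choosing the weights so that the negative correction at $r=2m+1$ for one $m$ is cancelled by positive corrections at the other two values of $m$, uniformly over all $r$. The closed form for $g_r$ reduces this to a finite check for $r\le11$. I will verify that check carefully, in particular the borderline case $r=5$, where $\Sigma_Q=0$ exactly.
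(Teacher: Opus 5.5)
Your proof is correct, and it takes a genuinely different and more direct route than the paper.

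The paper follows Lin's finite-difference argument. It writes out $\nabla\kappa_0$ and $\nabla^2\kappa_0,\dots,\nabla^2\kappa_3$ and eliminates the volume term, arriving at $-2\kappa_0+\kappa_1+\kappa_2+\kappa_3-\kappa_4=c_8+2\sum_{r\ge9}c_r$. That combination has zero $V$-coefficient, so the paper needs further geometric input:
\begin{enumerate}
\item $\kappa_0=1$;
\item the injection $H^0(-2K_X)\hookrightarrow H^0(-4K_X)$;
\item powers of a nonzero section of $-K_X$;
\end{enumerate}
and then a case analysis that ends in the contradiction $V=0$.

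You instead give a single certificate with positive $V$-coefficient,
\[
(\kappa_2-1)+2(\kappa_3-1)+2(\kappa_5-1)=45V+\sum_Q\Sigma_Q ,
\]
and check that each local contribution $\Sigma_Q$ is nonnegative. I checked the ingredients:
\begin{itemize}
\item Your closed form $g_r(k)=k(r-k-2)/(2r)$ agrees with the paper's difference identities; for instance, it reproduces the coefficients $2c_3+c_4$ in $\nabla^2\kappa_0$.
\item Every entry of your table for $3\le r\le 11$ is correct, including $\Sigma_Q=0$ at $r=5$.
\item For $r\ge12$ the three characters $4,6,10$ lie in $[0,r-2]$, so all corrections there are nonnegative.
\end{itemize}

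Your argument uses only the Riemann--Roch identity \eqref{o:eq:anticanonical-rr} and $V>0$, with no multiplication of sections and no case split. It also proves a sharper statement: $l$ can be taken in $\{2,3,5\}$. The paper's version has the merit of reproducing Lin's original combination. For the volume bound obtained from Zhu's estimate, the two give the same constant $\eps/50$, since your conclusion still allows $l=5$.
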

\begin{proof}
This is \cite[Theorem~5.2]{Lin16} in the log del Pezzo case with scalar quotient singularities. The same finite-difference proof works with nef and big $-K_X$ and additional Du Val points, as follows.

Let $c_r$ count the points $\frac1r(1,1)$ for $r\ge3$, and set
\[
 A=V(X)-\sum_{r\ge3}\frac{4c_r}{r},\qquad
 \nabla\kappa_m=\kappa_{m+1}-\kappa_m.
\]
At such a point, \eqref{o:eq:rr-finite} reads $c_Q(m)=k(r-2-k)/(2r)$, where $k=\overline{2m}$. Substitution in \eqref{o:eq:anticanonical-rr} gives
\begin{align*}
 \nabla\kappa_0&=A+\sum_{r\ge3}c_r,\\
 \nabla^2\kappa_0&=A+2c_3+c_4,\\
 \nabla^2\kappa_1&=A+c_3+c_4+2c_5+c_6,\\
 \nabla^2\kappa_2&=A+c_3+c_4+c_6+2c_7+c_8,\\
 \nabla^2\kappa_3&=A+2c_3+c_4+c_5+c_8+2c_9+c_{10}.
\end{align*}
Eliminating $A$ gives the combination from \cite[Theorem~5.1]{Lin16}:
\begin{equation}\label{o:eq:lin-combination}
 -2\kappa_0+\kappa_1+\kappa_2+\kappa_3-\kappa_4
 =c_8+2\sum_{r\ge9}c_r\ge0.
\end{equation}
We have $\kappa_0=1$ and $\kappa_2\le\kappa_4$. For the latter, if $\kappa_2>0$, multiplication by a nonzero section gives an injection from $H^0(X,-2K_X)$ to $H^0(X,-4K_X)$; otherwise it is immediate. Hence \eqref{o:eq:lin-combination} implies $\kappa_1+\kappa_3\ge2$.

Suppose for contradiction that $\kappa_m\le1$ for $1\le m\le5$. If $\kappa_1=0$, then $\kappa_3\ge2$, a contradiction. Thus $\kappa_1=1$, and powers of its nonzero section imply $\kappa_1=\cdots=\kappa_5=1$. Equation~\eqref{o:eq:lin-combination} forces $c_r=0$ for $r\ge8$. The four second differences vanish. Subtracting successive identities gives
\[
 c_3=2c_5+c_6,\qquad c_5=c_7,\qquad c_6=c_3+c_5-2c_7.
\]
It follows that $c_5=c_7=0$ and $c_3=c_6$. The first-difference identity now becomes
\[
 0=\nabla\kappa_0=V(X)-\frac13c_3+\frac13c_6=V(X)>0,
\]
which is impossible.
\end{proof}

\subsection{The conversion from nonvanishing to volume}
Zhu's theorem gives the following useful implication in the present setting:
\begin{equation}\label{o:eq:zhu}
 h^0(X,-lK_X)\ge2
 \quad\Longrightarrow\quad
 V(X)\ge\frac{\eps}{2l^2}.
\end{equation}
Indeed, \cite[Corollary~1.7]{Zhu23} applies to the nef and big Weil divisor $H=-lK_X$ and the effective divisor $L=0$; its hypothesis that $L-K_X$ is nef is satisfied. It gives $H^2\ge\eps/2$, which is \eqref{o:eq:zhu}. Thus Proposition~\ref{o:prop:lin} implies
\begin{corollary}\label{o:cor:scalar-volume}
If $X$ is an $\eps$-lc weak del Pezzo surface and all its noncanonical points are scalar cyclic quotients, then $V(X)\ge\eps/50$.
\end{corollary}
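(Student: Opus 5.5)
The statement is Corollary~\ref{o:cor:scalar-volume}. The plan is to combine Lin's nonvanishing (Proposition~\ref{o:prop:lin}) with Zhu's conversion \eqref{o:eq:zhu}.

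First, fix the hypotheses. $X$ is klt because it is $\eps$-lc with $\eps>0$. The anticanonical divisor $-K_X$ is nef and big, and every noncanonical point of $X$ has type $\frac1r(1,1)$. The remaining singular points are canonical, hence Du Val. These are exactly the hypotheses of Proposition~\ref{o:prop:lin}, which gives some $l$ with $1\le l\le5$ and $h^0(X,-lK_X)\ge2$. We work on $X$ itself, so there is no need to pass to the anticanonical model. The Riemann--Roch expression \eqref{o:eq:anticanonical-rr} used inside Lin's argument needs only nef and big $-K_X$, together with cyclic noncanonical points and Du Val points elsewhere, and all of these hold.

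Second, apply \eqref{o:eq:zhu} with $H=-lK_X$ and $L=0$. This requires checking the hypotheses of \cite[Corollary~1.7]{Zhu23}. The divisor $H$ is a nef and big Weil divisor. $L=0$ is effective, and $L-K_X=-K_X$ is nef. The surface is $\eps$-lc, and $H$ moves in a pencil since $h^0(H)\ge2$. The conclusion is $H^2=l^2V(X)\ge\eps/2$. Since $l\le5$, this gives $V(X)\ge\eps/(2l^2)\ge\eps/50$.

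The main obstacle is confirming that Zhu's corollary really applies in this weak, possibly non-Cartier setting. The relevant points are that $H$ is only Weil, that the $\eps$-lc normalization agrees with Zhu's convention, and whether Zhu assumes ampleness. If ampleness were required, I would pass to the crepant anticanonical model $Z$ of Lemma~\ref{m:lem:weak-model}. Pullback of sections gives $h^0(Z,-lK_Z)=h^0(X,-lK_X)\ge2$, because the model is crepant and $g_*\mathcal O_X=\mathcal O_Z$. The model has the same volume and is still $\eps$-lc, so Zhu's bound can be applied there. Note that on $Z$ the noncanonical points need not remain scalar. This causes no problem, because nonvanishing was already obtained on $X$, and only the conclusion is transported to $Z$.
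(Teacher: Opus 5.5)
Your proposal is correct and follows the paper's proof: Proposition~\ref{o:prop:lin} gives some $l\le5$ with $h^0(X,-lK_X)\ge2$, and Zhu's bound \eqref{o:eq:zhu}, applied with $H=-lK_X$ and $L=0$, gives $l^2V(X)\ge\eps/2$, hence $V(X)\ge\eps/50$. The paper applies Zhu's corollary directly to the nef and big Weil divisor $-lK_X$ on $X$, so your fallback to the anticanonical model is not needed, although it is harmless.
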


The linear order is specific to the scalar-quotient hypothesis. In
contrast, the quadratic examples of Proposition~\ref{e:prop:quotients}
exclude a uniformly bounded anticanonical pencil multiple for all cyclic
quotient Fano surfaces, by \eqref{o:eq:zhu}.

\end{document}